\documentclass{article}
\usepackage{prelude}

\title{Operational Umbral Calculus}
\author{Kei Beauduin}
\date{}

\addbibresource{refs.bib}

\newcommand{\Ev}[1]{\mathcal E^{\pa{#1}}}
\newcommand{\E}{\mathcal E}
\newcommand{\Bern}{\mathcal B}
\newcommand{\str}[1]{\mathcal{S}^{\pa{#1}}}
\newcommand{\Lag}[1]{L^{\pa{#1}}}
\newcommand{\I}{\mathcal I}
\newcommand{\Int}[1]{\mathcal I_{\pa{#1}}}
\newcommand{\Sum}[1]{\Sigma_{\pa{#1}}}
\newcommand{\D}{\mathcal D}

\newcommand{\Gat}{\mathcal G}
\newcommand{\Ca}{\mathcal C}

\DeclareMathOperator{\g}{g}
\DeclareMathOperator{\G}{G}

\renewcommand{\L}{\mathcal L}
\newcommand{\hL}{\hat\L}

\newkeytheorem{definition}[name = Definition, numberwithin=section]
\newkeytheorem{example}[name = Example, style = example, numberwithin=section]
\newkeytheorem{theorem}[name = Theorem, style = theorem, numberwithin=section]
\newkeytheorem{proposition}[name = Proposition, style = theorem, numberwithin=section]
\newkeytheorem{corollary}[name = Corollary, style = theorem, numberwithin=section]
\newkeytheorem{lemma}[name = Lemma, style = theorem, numberwithin=section]
\newkeytheorem{proof}[name = Proof, style = proof, numbered=no]
\newkeytheorem{remark}[name = Remark, style = remark]

%using these 2 to make operation definition tables
\newcommand{\lin}[3]{\text{#1} & #2 & #2 f &\,:& x \longmapsto #3 \\}

\newenvironment{opdef}{
    \[
    \begin{array}{l|c|r@{}p{.2cm}@{}l}
    \text{Name} & \text{Notation} & \multicolumn{3}{c}{\text{Definition}}\\
    \hline
}{
    \end{array}
    \]
}

%\excludeversion{proof}

\begin{document}

\maketitle

\begin{abstract}
    In this paper, we investigate the power of nearly purely operational techniques in the study of umbral calculus. We present a concise reconstruction of the theory based on a systematic use of linear operators, with particular attention to umbral operators. We also give an in-depth study of the generating functions associated to umbral calculus, and show how these lead to short proofs of several advanced results, including the Lagrange-Bürmann inversion theorem. Finally, we discuss pseudoinverses for delta operators and illustrate our methods with a variety of examples.
\end{abstract}

%\noindent\textbf{AMS Classifications :} 05A40, 13F25, 39B12, 11B73, 05A10, 05A19, 47B99, 47B33, 11B68

\setcounter{tocdepth}{2}\tableofcontents

\section{Introduction}

At its inception, umbral calculus comprised a set of informal but powerful techniques devised by Blissard in the late 19th century. These techniques mainly involved manipulating the indices and exponents of certain number sequences, particularly \emph{Bernoulli numbers}, while exploiting the properties of exponents. Despite yielding new identities, these techniques initially faced skepticism from the scientific community due to their lack of rigor.

In the early 1970s, Rota et al. \cite{mullin1970,rota1973} laid the first rigorous foundation for umbral calculus through its integration with Pincherle and Amaldi's insights on linear operators \cite{pincherle1901} and the theory of Sheffer sequences. The latter was a theory about certain polynomial sequences, independently developed in different directions by Sheffer \cite{sheffer1939} and Steffensen \cite{steffensen1941}, generalizing that of Appell \cite{appell1880}.

Consequently, numerous novel frameworks for the study of umbral calculus emerged. Notably, in collaboration with Roman, Rota further developed the notion in \cite{roman1978} that the informal methods of the 19th century could be elucidated by linear forms over the set of polynomials, referred to as \emph{linear functionals}. As the theory evolved, umbral calculus increasingly diverged from its origins, prompting Rota and Taylor to construct what they designated as the \emph{Classical umbral calculus} \cite{rota1994}, representing the first successful axiomatic foundation for the umbral calculus of the 19th century. The relevant survey \cite{dibucchianico1995}, containing hundreds of references, details the history on that subject.

The aim of this manuscript is to extend the foundational work established by Rota in his seminal papers with a closer study of the coefficients of basic polynomial sequences and an emphasis on linear operators.

In \Cref{s:Def,s:Classic}, we will follow Rota's initial approach to develop the fundamental tools from \cite{rota1973}, starting with the definition of \emph{delta operators}, while incorporating a more systematic use of linear operators. For example, we will introduce \emph{bivariate characterizations} of the \emph{shift-invariant}, \emph{umbral} and \emph{Sheffer operators}, which will simplify some proofs. In \Cref{s:coeffs}, we will conduct an in-depth study of the coefficients of basic sets as well as the generating functions aspect of umbral calculus. These results will be applied to derive some deeper theorems of the umbral calculus. Subsequently in \Cref{s:Sigma}, we will construct \emph{pseudoinverses} for delta operators, we coined \emph{sigma operators}. They are related to the Euler-Maclaurin summation formula and \emph{fractional summation}. Finally, numerous classic examples will be discussed in \Cref{s:Ex}.

\section{Definitions and first results}\label{s:Def}

This section introduces the main objects of study: delta, umbral and Sheffer operators. Their construction will be followed by proofs of their basic properties, which will enhance our understanding of them. These insights will pave the way to demonstrate the equivalence between basicness and binomial type for polynomial sequences.

\subsection{Preliminaries}

In this paper, our focus lies on the $\C$-algebra of univariate polynomials, denoted by $\C[x]$, and on the $\C$-algebra of formal univariate power series, denoted by $\C\bbra{x} \supset \C[x]$. We denote by $\L(V)$ the set of endomorphisms over a $\C$-vector space $V$ and an \emph{operator} refers to as a member of $\L(\C[x])$. The definitions for the elementary operators are provided in Figure \ref{f:table1}.

\begin{figure}[ht]\label{f:table1}
    \centering
    \begin{opdef}
    \lin{Identity}{1}{f(x)}
    \lin{Evaluation at zero}{\E}{f(0)}
    \lin{Evaluation $(a \in \C)$}{\Ev a}{f(a)}
    \lin{Scalar ($c \in \C$)}{c}{cf(x)}
    \lin{Multiplication}{\x}{x f(x)}
    \lin{Shift $(a \in \C)$}{E^a}{f(x+a)}
    \lin{Derivative}{D}{\odv{f(x)}{x}}
    \end{opdef}
    \caption{Elementary operators}
\end{figure}

It is customary to represent the composition and the application linear function as an implicit product, such that $(\x \circ D)(f)(x)$ is more succinctly expressed as $\x D f(x)$. This notation is particularly intuitive when viewing operators as a generalization of scalar multiplication. It is good practice to use a different symbol than $x$ to represent the multiplication operator $\x$, as in our notations, $\x f(x+1) = (x+1) f(x+1)$ rather than $x f(x+1)$ that one might expect. By slight abuse of notation, we denote the identity as "$1$" rather than "$I$", as it can be interpreted as a special case of the scalar operator with $c = 1$.

In multivariate settings, we will distinguish operators by indexing the variable to which they apply. By default, operators are applied to the variable $x$. In other words, if $U$ is an operator, $U_x = U$. Note that operators acting on different variables commute. Additionally, for $n \in \N$, we define the $n$-th coefficient in the Taylor series of $f(x)$ using the common notation $[x^n]$. In our operators, this can be expressed as
\begin{equation}
    [x^n] f(x) := \inv{n!} \E D^n f(x).
\end{equation}

\subsection{Polynomial sequences and invertible operators}\label{s:polyinvert}

For $n\in\N$, let $\C_n[x]$ be the set of polynomials $p(x)$ with a single variable $x$ of degree $\deg p(x)$ less than or equal to $n$. A sequence of polynomials $(p_n(x))_{n\in\N}$ is said to be a \emph{polynomial sequence} $\{p_n(x)\}_{n\in\N}$ if, for all positive integers $n$, $\deg p_n(x) = n$. The fact that $(p_k(x))_{0\leq k\leq n}$ is a basis for $\C_n[x]$ leads to the following proposition.

\begin{proposition}\label{p:span}
    If $U, V \in \L(\C[x])$ then $U = V$ if and only if for some polynomial sequence $\{p_n(x)\}_{n\in\N}$, $U p_n(x) = V p_n(x)$ for all positive integers $n$.
\end{proposition}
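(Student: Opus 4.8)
The plan is to reduce the statement to the standard fact that a linear operator on a vector space is determined by its action on a basis, exploiting the observation recorded just before the proposition that the initial segment $(p_k)_{0\le k\le n}$ is a basis of $\P_n$. Since every polynomial lies in some $\P_m$, the whole sequence $\{p_n\}_{n\in\N}$ is then a basis of $\P$, and the two implications of the equivalence can be treated separately.

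The forward implication is immediate and needs no work: if $U=V$ in $\L(\P)$, then $U$ and $V$ agree on every element of $\P$, and in particular $Up_n(x)=Vp_n(x)$ for each term of any polynomial sequence.

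For the converse I would argue pointwise. Assume $\{p_n\}_{n\in\N}$ is a polynomial sequence with $Up_n=Vp_n$ for every $n$, and fix an arbitrary $f\in\P$ of degree $m$. Because $(p_k)_{0\le k\le m}$ spans $\P_m$, there are scalars $c_0,\dots,c_m\in\K$ with $f=\sum_{k=0}^m c_k p_k$. Linearity of $U$ and $V$ together with the termwise hypothesis then gives
\[
Uf=\sum_{k=0}^m c_k\,Up_k=\sum_{k=0}^m c_k\,Vp_k=Vf,
\]
and as $f$ was arbitrary this yields $U=V$. Note that only finite sums occur here, so no question of convergence on $\P$ arises.

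I expect no serious obstacle; the one point demanding care is the index $n=0$. The hypothesis is stated for positive integers, whereas expanding a general $f$ uses the degree-zero term $p_0$, so the telescoping above tacitly requires $Up_0=Vp_0$ as well. Since the preceding remark asserts that $(p_k)_{0\le k\le n}$ is genuinely a basis of $\P_n$, the term $p_0$ is a nonzero constant and must be counted among those basis vectors; I would therefore either read the quantifier as ranging over all of $\N$, or record explicitly that the constant polynomials are included, as this is the sole place where dropping them would silently break the argument.
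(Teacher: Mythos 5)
Your proof is correct and is essentially the paper's own argument: the paper offers no separate proof, presenting the proposition as an immediate consequence of the preceding remark that $(p_k)_{0\le k\le n}$ is a basis of $\P_n$, which is exactly the reduction you carry out (trivial forward direction, basis expansion plus linearity for the converse). Your caveat about the index $n=0$ is also well taken: the hypothesis must cover $p_0$ for the expansion to close (otherwise $U$ and $V$ could differ on constants), so the paper's ``positive integers'' should be read as all of $\N$, consistent with the author's French-style convention in which \emph{positif} means nonnegative.
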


Now let $\hL \subset \L(\C[x])$ to be the set of operators that preserve the polynomial degree upon application.

\begin{definition}
  A polynomial sequence $\{p_n(x)\}_{n\in\N}$ is said to be unitary when the polynomials are monic, i.e., $[x^n] p_n(x) = 1$ for all $n\in\N$.
\end{definition}

For every operator in $\phi \in \hL$, we define its associated polynomial sequence $\{\phi_n(x)\}_{n\in\N}$ by $\phi_n(x) := \phi x^n$. As $\phi$ preserves the degree of polynomials, it induces a transformation between bases of $\C[x]$. Consequently, the ensuing proposition is valid.

\begin{proposition}
If $\phi \in \hL$, then $\phi$ has an inverse denoted by $\phi^{-1}$. Therefore, $\hL$ a (non-abelian) group.
\end{proposition}

A special type of polynomial sequence that can be subjected to study via umbral calculus is polynomial sequences of \emph{binomial type}.

\begin{definition}\label{d:binom}
    We say $\{p_n(x)\}_{n\in\N}$ is of binomial type if for all $n\in\N$
    \begin{equation}\label{e:binom}
        p_n(x+y) = \sum_{k=0}^n \binom{n}{k} p_k(x) p_{n-k}(y).
    \end{equation}
\end{definition}

It is worth noting that there are two ways to view this functional equation. By setting $q_n(x) := p_n(x)/n!$, we can infer a convolution identity
\begin{equation}\label{e:convo}
    q_n(x+y) = \sum_{k=0}^n q_k(x) q_{n-k}(y).
\end{equation}
Both conventions are used in the literature \cite{rota1973,dibucchianico1998}, but it will appear evident that the original convention is operationally more convenient.

\begin{theorem}\label{t:bij}
    For $\phi \in \hL$, the application $\phi \mapsto \{\phi_n(x)\}_{n\in\N}$ is bijective.
\end{theorem}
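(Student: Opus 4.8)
The plan is to split bijectivity into injectivity, which follows at once from Proposition \ref{p:span}, and surjectivity, which requires a short explicit construction. First I would record that the map is well-defined: for $\phi \in \hL$ the polynomial $\phi_n = \phi x^n$ has degree exactly $n$ because $\phi$ preserves degree, so $\{\phi_n\}_{n\in\N}$ is indeed a polynomial sequence and the codomain is the set of all polynomial sequences.

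Injectivity is immediate. The monomials $\{x^n\}_{n\in\N}$ form a polynomial sequence, so if $\phi, \psi \in \hL$ satisfy $\phi_n = \psi_n$ for all positive integers $n$, i.e.\ $\phi x^n = \psi x^n$, then Proposition \ref{p:span} forces $\phi = \psi$.

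For surjectivity I would start from an arbitrary polynomial sequence $\{p_n\}_{n\in\N}$ and define a linear operator by $\phi x^n \defeq p_n(x)$, extended linearly; this is well-posed since $\{x^n\}_{n\in\N}$ is a basis of $\P$, so $\phi \in \L(\P)$. The one genuine point to check---and the only real obstacle---is that $\phi \in \hL$, that is, that degree preservation on the basis upgrades to degree preservation on all of $\P$. This follows from the triangular structure of the basis change: for $q = \sum_{k=0}^n a_k x^k$ with $a_n \neq 0$ we have $\phi q = \sum_{k=0}^n a_k p_k$, and since each $p_k \in \hP_k$ the coefficient of $x^n$ in $\phi q$ equals $a_n [x^n] p_n$, which is nonzero because $\{p_n\}$ is a polynomial sequence. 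Hence $\deg \phi q = n = \deg q$, so $\phi$ maps $\hP_n$ into $\hP_n$ for every $n$, giving $\phi \in \hL$ with $\{\phi_n\}_{n\in\N} = \{p_n\}_{n\in\N}$. This completes surjectivity and hence the proof.
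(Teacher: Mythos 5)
Your proof is correct and follows essentially the same route as the paper: injectivity via Proposition \ref{p:span} applied to the monomials, and surjectivity by defining $\phi x^n \defeq p_n(x)$ and extending linearly. The only difference is that you explicitly verify that the constructed $\phi$ preserves degrees (via the triangular leading-coefficient argument), a point the paper dismisses with ``by definition, $\phi$ does not affect the degree''---your version is the more careful one, since that claim is immediate only on the monomials themselves.
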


\begin{proof}
Injectivity follows from \Cref{p:span}. For the surjectivity, suppose that we have a polynomial sequence $\{p_n(x)\}_{n\in\N}$. We define $\phi$ by $\phi(x^n) = p_n(x)$ extended by linearity to all polynomials. By definition, $\phi$ does not affect the degree, so $\phi \in \hL$.
\end{proof}

We designate $p \in \hL$ as the inverse of the map in \Cref{t:bij} for $\{p_n(x)\}_{n\in\N}$, and extend the notion of unitarity to operators in $\hL$ by saying that $p$ is unitary if and only if $\{p_n(x)\}_{n\in\N}$ is unitary.

We aim to present bivariate characterizations of some known classes of operators. It is observed that many of the operators studied in umbral calculus are characterized in bivariate settings. A notable instance is the operator that maps monomials to sequences of binomial-type polynomials.

\begin{proposition}[{\cite[\S 4.2]{zeilberger1980}}]\label{p:px+y}
    A polynomial sequence $\{p_n(x)\}_{n\in\N}$ is of binomial type if and only if, for two variables $x, y$, $p_{x+y} = p_x p_y$ in $\C[x+y]$.
\end{proposition}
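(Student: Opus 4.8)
The plan is to test the claimed operator identity $p_{x+y} = p_x p_y$ against the family of polynomials $(x+y)^n$, $n \in \N$, and to observe that the resulting term-by-term equalities are precisely the binomial-type identities of \Cref{d:binom}. The whole argument then collapses to a single application of the binomial theorem, combined with the spanning principle of \Cref{p:span} to pass from pointwise agreement to operator equality. Because the same computation yields both implications at once, there is no need to treat the two directions of the equivalence separately.

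The first task is to fix the meaning of the subscript notation in the bivariate setting. Here $p_x$ denotes $p$ acting on the variable $x$ with $y$ held as a scalar, so that $p_x x^k = p_k(x)$, and symmetrically $p_y y^m = p_m(y)$; since the two operators act on independent variables, they commute. The operator $p_{x+y}$ is $p$ acting on the single combined argument, so that $p_{x+y}(x+y)^n = p_n(x+y)$. The computational heart of the proof is then one line: expanding by the binomial theorem and using linearity together with the commutation of $p_x$ and $p_y$,
\[
    p_x p_y (x+y)^n = \sum_{k=0}^n \binom{n}{k} (p_x x^k)(p_y y^{n-k}) = \sum_{k=0}^n \binom{n}{k} p_k(x)\, p_{n-k}(y).
\]
Comparing this with $p_{x+y}(x+y)^n = p_n(x+y)$, one sees that the equality $p_{x+y}(x+y)^n = p_x p_y (x+y)^n$ for a fixed $n$ is \emph{exactly} the binomial-type identity of \Cref{d:binom}. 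Hence binomial type is equivalent to agreement of the two operators on every $(x+y)^n$, and since $\{(x+y)^n\}_{n\in\N}$ is a basis for the space on which $p_{x+y}$ is defined, the spanning argument of \Cref{p:span} upgrades this to the full operator identity.

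The one genuinely delicate point, which I would address explicitly before running the computation, is the precise domain on which $p_{x+y} = p_x p_y$ is being asserted. While $p_{x+y}$ is naturally defined only on functions of the combined argument $x+y$, the operator $p_x p_y$ does \emph{not} preserve this subalgebra: its image $\sum_k \binom{n}{k} p_k(x)\, p_{n-k}(y)$ is in general a genuine bivariate polynomial. The asserted equality must therefore be read as an equality of two maps out of the subalgebra generated by $x+y$ into the full bivariate polynomial ring, with the family $\{(x+y)^n\}$ serving as a basis of that common domain. From this vantage point, the binomial-type identity is precisely the statement that, for each $n$, the bivariate image produced by $p_x p_y$ collapses back onto the single-argument value $p_n(x+y)$ produced by $p_{x+y}$. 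Once this domain convention is made explicit, nothing else in the argument is subtle.
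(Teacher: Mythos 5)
Your proof is correct and follows essentially the same route as the paper's: apply both operators to $(x+y)^n$, expand by the binomial theorem, and observe that agreement on this family is precisely the binomial-type identity. Your added care about the spanning step and about the domain of the identity (maps from the subalgebra generated by $x+y$ into the bivariate ring) makes explicit what the paper's terse two-line proof leaves implicit, but it is the same argument.
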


\begin{proof}
    On the one hand, $p_{x+y}(x+y)^n = p_n(x+y)$, while on the other hand
    \[
    p_x p_y(x+y)^n = p_x p_y\sum_{k=0}^n \binom{n}{k} x^k y^{n-k} = \sum_{k=0}^n \binom{n}{k} p_k(x) p_{n-k}(y) = p_n(x+y).
    \]
\end{proof}

\subsection{Delta operators}

The construction of the original umbral calculus \cite{mullin1970, rota1973} was performed on the basis of delta operators. This method is arguably
the simplest and the easiest to motivate, as our main primary concern is to generalize the equation $D x^n = n x^{n-1}$ to $Q p_n(x) = n p_{n-1}(x)$. Furthermore, this construction is based on very few natural axioms. This somewhat contrasts with the approach taken in the subsequent work by Rota and Roman \cite{roman1978} which relies on \emph{linear functionals}. As far as we know the two approaches are completely equivalent.

We begin by introducing one of the defining property of delta operators.

\begin{definition}[shift-invariance]
    An operator $U \in \L(\C[x])$ is said to be shift-invariant if it commutes with $E^a$ for all $a$.
\end{definition}

\begin{example}
    The simplest example of shift-invariant operators is, of course, $E^a$. Another example is the derivative operator $D$. However, the multiplication operator $\x$ is not shift-invariant.
\end{example}

We rapidly mention the second bivariate characterization that will be useful in the subsequent sections.
\begin{proposition}\label{p:ux=ux+y}
    An operator $U$ is shift-invariant if and only if, for two variables $x$ and $y$, $U_x = U_{x+y}$ in $\C[x+y]$.
\end{proposition}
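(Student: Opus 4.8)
The plan is to mirror the proof of \Cref{p:px+y}: unwind the meaning of the bivariate identity by testing both operators against a function of the combined argument, and then recognize shift-invariance directly. Throughout, I read $U_{x+y}$ as the operator that applies $U$ in the single variable $x+y$, so that $U_{x+y}\,g(x+y) = (Ug)(x+y)$ for every $g \in \tP$ (this is the same convention under which $p_{x+y}(x+y)^n = p_n(x+y)$ appears in the proof of \Cref{p:px+y}), while $U_x$ applies $U$ in $x$ with $y$ held as a parameter. The equality $U_x = U_{x+y}$ is then understood as an identity of operators acting on the functions $f(x+y)$.

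The key step is to rewrite each side using the shift operator $E^y\colon g(x)\mapsto g(x+y)$, so that $f(x+y) = E^y f(x)$. On one hand, for fixed $y$ the map $x \mapsto f(x+y)$ is exactly $E^y f$, and applying $U$ in $x$ gives
\[
U_x\, f(x+y) = (U E^y f)(x).
\]
On the other hand, applying $U$ in the combined variable and then displaying the result as a function of $x$ gives
\[
U_{x+y}\, f(x+y) = (Uf)(x+y) = (E^y U f)(x).
\]
Comparing the two, the identity $U_x = U_{x+y}$ holds if and only if $U E^y = E^y U$ for all $y$, which is precisely the statement that $U$ is shift-invariant.

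The main thing to be careful about is the bridge between the variable $y$ appearing in $U_{x+y}$ and the scalar shifts $E^a$, $a\in\K$, in the definition of shift-invariance. Since both $U E^y$ and $E^y U$ are, via $E^y = \sum_{k\ge 0} \frac{y^k}{k!}D^k$, power series in $y$ with operator coefficients, their equality as formal identities in $y$ is equivalent to their equality for every scalar value $a \in \K$; here the hypothesis that $\K$ is infinite of characteristic zero is what guarantees that a coefficientwise polynomial identity holding for all scalars $a$ already holds formally in $y$. I would state this reduction explicitly, so that the bivariate identity and the original $E^a$-commutation condition are seen to be the same condition. Everything else is a direct unwinding of the definitions together with linearity, exactly as in \Cref{p:px+y}.
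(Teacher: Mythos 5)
Your proof is correct and takes essentially the same route as the paper: your two displayed computations, $U_x f(x+y) = (UE^y f)(x)$ and $U_{x+y}f(x+y) = (E^y U f)(x)$, amount precisely to the conjugation identity $E_x^y U_x = U_{x+y} E_x^y$, which is the single ``general identity for linear operators'' that the paper's one-line proof invokes. Your extra care in bridging the formal variable $y$ with the scalar shifts $E^a$ (via the infinitude of $\K$) addresses a point the paper silently glosses over, and it is handled correctly.
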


\begin{proof}
    The equivalence is explained by the shift-invariance property that $E_x^y U_x = U_x E_x^y$ and the general identity for operators: $E_x^y U_x = U_{x+y} E_x^y$.
\end{proof}

\begin{definition}[delta operator]\label{d:deltaop}
    An operator $Q$ is called a delta operator if $Q$ is shift-invariant and $Qx = c$ for some constant $c \neq 0$. If $c = 1$, we say that $Q$ is unitary.
\end{definition}

\begin{example}\label{x:Delta}
    After the derivative operator $D$, the second most important (unitary) delta operator is the forward difference operator $\Delta$, defined by $\Delta f(x) = f(x+1) - f(x)$. Writing $\Delta = E - 1$, $E := E^1$, it can be easily checked that $\Delta$ is shift-invariant.
\end{example}

\begin{proposition}[{\cite[Props.~2.1 and 2.2]{rota1973}}]\label{p:Qa=0}
If $Q$ is a delta operator, then
\begin{alphabetize}
    \item for $a \in \C$, $Qa = 0$.
    \item for a nonconstant polynomial $p(x)$, $\deg Qp(x) = \deg p(x) - 1$.
\end{alphabetize}
\end{proposition}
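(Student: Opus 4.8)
The plan is to prove both parts using nothing but shift-invariance and the normalization $Qx = c$, avoiding any expansion of $Q$ as a series in $D$ since that machinery is not yet available.

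For (a), the quickest route is to feed $x$ itself through the shift-invariance relation $QE^h = E^h Q$. Applying both sides to $x$ gives $Q(x+h) = E^h(Qx) = E^h c = c$, the last step because $c$ is constant and hence fixed by $E^h$. Expanding the left side by linearity yields $Qx + Q(h \cdot 1) = c$, i.e. $c + h\,Q(1) = c$, so $h\,Q(1) = 0$ for every $h \in \K$; taking $h = 1$ forces $Q(1) = 0$. Then $Qa = a\,Q(1) = 0$ for all $a \in \K$ by linearity.

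For (b), I will induct on $n = \deg p$, and it suffices to treat monomials $x^n$ and assemble the general case by linearity. The base case $n=1$ is immediate: $Q(\alpha x + \beta) = \alpha c \neq 0$ since $\alpha, c \neq 0$, so the degree is $0 = 1-1$. For the inductive step, assume $\deg Qx^j = j-1$ for all $1 \le j < n$ (with $Qx^0 = 0$ from part (a)). Writing $r(x) \defeq Qx^n$ and applying shift-invariance to $x^n$, I expand
\[
E^h r(x) = Q(x+h)^n = \sum_{j=0}^n \binom{n}{j} h^{\,n-j}\,Qx^j = r(x) + \sum_{j=1}^{n-1}\binom{n}{j}h^{\,n-j}\,Qx^j,
\]
which rearranges to the difference identity $r(x+h) - r(x) = \sum_{j=1}^{n-1}\binom{n}{j}h^{\,n-j}\,Qx^j$. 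By the induction hypothesis the right side has $x$-degree at most $n-2$, while the left side has $x$-degree exactly $(\deg r) - 1$; comparing gives $\deg r \le n-1$.

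The main obstacle — and the only subtle point — is upgrading this inequality to the exact value $\deg r = n-1$, i.e. showing the leading coefficient does not vanish. For this I will match the coefficient of $x^{n-2}$ on both sides of the difference identity: on the right, only the $j = n-1$ term can contribute, producing $n\,h\,\lambda_{n-1}x^{n-2}$ where $\lambda_{n-1} \neq 0$ is the (nonzero, by induction) leading coefficient of $Qx^{n-1}$; were $\deg r \le n-2$, the left side would carry no $x^{n-2}$ term, contradicting $n\,h\,\lambda_{n-1} \neq 0$ as a polynomial in $h$. Hence $\deg Qx^n = n-1$, closing the induction. Finally, for a general $p$ of degree $n$ with leading coefficient $a_n \neq 0$, the term $a_n Qx^n$ has degree $n-1$ while every other $Qx^k$ with $k < n$ has strictly smaller degree, so no cancellation of the top term occurs and $\deg Qp = n-1 = \deg p - 1$.
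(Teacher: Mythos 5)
Your proof is correct, and part (a) is essentially the paper's own argument: both run $x$ through $QE^a = E^a Q$ and peel off the constant by linearity. For part (b), however, you take a genuinely different route. The paper avoids induction altogether: starting from the same expansion $Q(x+a)^n = \sum_{k} \binom{n}{k} a^k Q x^{n-k}$, it applies the evaluation functional $\E$ to obtain the explicit formula $q(a) = \sum_{k=0}^n \binom{n}{k} \bigl(\E Q x^{n-k}\bigr) a^k$ for $q = Qx^n$, and then simply reads off that the coefficient of $a^n$ is $\E Q 1 = 0$ (by part (a)) while that of $a^{n-1}$ is $n\,\E Q x = nc \neq 0$; this pins the degree at $n-1$ in one stroke, and incidentally produces all the coefficients of $Qx^n$ in a form that foreshadows the Expansion Theorem. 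You instead induct on the degree, rewrite shift-invariance as the difference identity $r(x+h) - r(x) = \sum_{j=1}^{n-1} \binom{n}{j} h^{n-j} Q x^j$, and pin down the degree by matching the coefficient of $x^{n-2}$, using that differencing drops the degree of a nonconstant polynomial by exactly one in characteristic zero. Your argument is more elementary---it never invokes $\E$---but is longer and needs the extra care you correctly exercise at the ``upgrade'' step (ruling out $\deg r \le n-2$); on the other hand, it makes explicit the linearity/no-cancellation step for a general nonconstant $p$, which the paper leaves implicit since its proof only treats the monomials $Qx^n$.
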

\begin{proof}
    \hfill
    \begin{alphabetize}
    \item We make a single use of all the properties of $Q$
    \[
    Qa + c = Qa + Qx = Q(a+x) = Q E^a x = E^a Qx = E^a c = c,
    \]
    hence $Qa = 0$.
    \item For $n>0$, let $q(x) = Q x^n$. For any $a$
    \begin{align*}
        q(x+a) &= E^a q(x) = E^a Qx^n = QE^a x^n = Q(x+a)^n \\
        &= Q \sum_{k=0}^n \binom{n}{k} a^k x^{n-k} = \sum_{k=0}^n \binom{n}{k} a^k Qx^{n-k}.
    \end{align*}
    Therefore, by applying $\E$, we recover the polynomial $q(a)$, for any $a$
    \[
    q(a) = \E q(x+a) = \sum_{k=0}^n \binom{n}{k} a^k \E Qx^{n-k}.
    \]
    The coefficient in front of $a^n$ is $\E Q x^0 = 0$ by (a). The one in front of $a^{n-1}$ is $n \E Q x = nc \neq 0$. Therefore, $q(x)$ is indeed of degree $n-1$.
    \end{alphabetize}
\end{proof}

\begin{corollary}[{\cite[Cor.~2.1.9b]{dibucchianico1998}}]\label{c:Qp0}
    If $p(x)$ is a polynomial such that $Qp(x) = 0$ then $p(x)$ is constant.
\end{corollary}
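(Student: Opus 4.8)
The plan is to recognize this corollary as essentially the contrapositive of part (b) of \Cref{p:Qa=0}, which I am permitted to assume. The entire content is a degree-counting argument, so I would argue by contraposition: rather than starting from $Qp(x) = 0$, I would assume that $p(x)$ is \emph{not} constant and derive that $Qp(x)$ cannot vanish.

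Concretely, suppose $p(x)$ is a nonconstant polynomial, so that $\deg p(x) \geq 1$. By \Cref{p:Qa=0}(b), the delta operator $Q$ lowers the degree by exactly one on nonconstant polynomials, whence $\deg Qp(x) = \deg p(x) - 1 \geq 0$. The only remaining point to invoke is the degree convention fixed in the preliminaries, namely $\deg 0 = -\infty$; since $\deg Qp(x) \geq 0 > -\infty$, the polynomial $Qp(x)$ is not the zero polynomial. Contrapositively, if $Qp(x) = 0$, then $p(x)$ must be constant, which is exactly the claim.

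I do not anticipate any genuine obstacle here, as the result is immediate once \Cref{p:Qa=0}(b) is in hand; the only subtlety worth stating explicitly is the handling of the zero polynomial through the degree convention, ensuring that ``degree at least zero'' is correctly read as ``nonzero.'' One could alternatively give a direct (non-contrapositive) argument by writing $\deg p(x) = \deg Qp(x) + 1 = -\infty + 1 = -\infty$, forcing $p(x) = 0$ to be a particular constant, but I find the contrapositive phrasing cleaner since it isolates the single inequality $\deg Qp(x) \geq 0$ that does all the work.
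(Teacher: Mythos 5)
Your proposal is correct and matches the paper's own proof essentially verbatim: both argue by contraposition, invoking \Cref{p:Qa=0}(b) to conclude that a nonconstant $p(x)$ gives $\deg Qp(x) \geq 0$, hence $Qp(x) \neq 0$. Your explicit mention of the convention $\deg 0 = -\infty$ is a minor (and harmless) elaboration the paper leaves implicit.
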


\begin{proof}
    Let us prove the contrapositive and suppose that $p(x)$ is nonconstant, that is, of degree $\geq 1$. Then, by \Cref{p:Qa=0}, $\deg Qp(x) \geq 0$. In other words, $Qp(x) \neq 0$.
\end{proof}

\subsection{Sheffer and umbral operators}

\begin{definition}
An invertible operator $s$ is called a Sheffer operator for $Q$ if
\[
Qs_n(x) = n s_{n-1}(x).
\]
We call $\{s_n(x)\}_{n\in\N}$ a Sheffer sequence or set for $Q$. If $Q = D$, then $s$ is referred to as an Appell operator and $\{s_n(x)\}_{n\in\N}$ an Appell sequence or set.

An invertible operator $\phi$ is said to be an umbral operator or basic for $Q$ if it satisfies
\begin{itemize}
    \item $\{\phi_n(x)\}_{n\in\N}$ is a Sheffer set for $Q$.
    \item $\phi_0(x) = 1$.
    \item for $n>0$, $\phi_n(0) = 0$.
\end{itemize}
We denote the relation by $\phi \leadsto Q$. We say that $\{\phi_n(x)\}_{n\in\N}$ is the basic sequence or set for $Q$.

\end{definition}

The additional conditions imposed on basic sets naturally constrain Sheffer sets, which enables a bijection with their delta operator.

\begin{proposition}[{\cite[Prop.~2.3]{rota1973}}]\label{p:uniq}
    \hfill
    \begin{alphabetize}
        \item For every Sheffer operator $s$, there exists a unique delta operator $Q$ such that $s$ is a Sheffer operator for $Q$.
        \item There is a one-to-one correspondence between the set of delta operators and the set of umbral operators.
    \end{alphabetize}
\end{proposition}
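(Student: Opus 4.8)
For (a) I would separate existence from uniqueness. Existence is already encoded in the terminology: to call $s$ a Sheffer operator is to assert that it is Sheffer for some delta operator, so the content is uniqueness, and I would obtain it directly from \Cref{p:span}. Since $s\in\hL$, its associated sequence $\{s_n\}_{n\in\N}$ is a polynomial sequence; hence if $Q$ and $Q'$ are delta operators for which $s$ is Sheffer, then $Q s_n = n s_{n-1} = Q' s_n$ for every $n\in\N$ (both sides vanishing when $n=0$), and \Cref{p:span} forces $Q = Q'$. I would also record the explicit form $Q = sDs^{-1}$, valid because $sDs^{-1}s_n = sDx^n = n s_{n-1}$, which pins down the unique delta operator attached to $s$.

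For (b) the correspondence in question is the relation $\phi\leadsto Q$. Every umbral operator is in particular a Sheffer operator, so part (a) attaches to each umbral $\phi$ a unique delta operator $Q$, giving a well-defined map from umbral operators to delta operators. To show it is a bijection I would construct an explicit inverse: for a fixed delta operator $Q$, I build its basic sequence $\{b_n\}_{n\in\N}$ by induction and let $\phi$ be the operator with $\phi x^n = b_n$. Put $b_0 = 1$, and, assuming $b_0,\dots,b_{n-1}$ already constructed with $\deg b_k = k$, seek $b_n$ solving $Q b_n = n b_{n-1}$ under the normalization $b_n(0)=0$.

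The solvability and uniqueness of this recursion is the heart of the proof, and it is precisely where \Cref{p:Qa=0} and \Cref{c:Qp0} enter. Restricting $Q$ to $\P_{n+1}$, \Cref{c:Qp0} identifies its kernel with the constants, a one-dimensional space, while the degree-lowering statement of \Cref{p:Qa=0} places its image inside $\P_n$; since $\dim\P_{n+1} = n+1$ and $\dim\P_n = n$, a dimension count shows $Q\colon\P_{n+1}\to\P_n$ is onto, so a preimage $b_n$ of $n b_{n-1}\in\P_n$ exists. Two preimages differ by a constant, so the condition $b_n(0)=0$ selects a unique $b_n$, and the degree-lowering property forces $\deg b_n = n$, for otherwise $\deg Q b_n < n-1 = \deg(n b_{n-1})$. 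Thus $\{b_n\}_{n\in\N}$ is a polynomial sequence meeting the three defining conditions of a basic set, $\phi\in\hL$ is invertible, and it is umbral for $Q$; the same induction shows it is the only such operator.

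It then remains to check that the two assignments are mutually inverse, which is formal: the basic operator built from $Q$ has $Q$ as its delta operator by the uniqueness in (a), and an umbral $\phi$ is recovered as the unique basic operator of its own delta operator. I expect the one genuine obstacle to be the inductive step, namely guaranteeing a degree-$n$ solution of $Q b_n = n b_{n-1}$ and isolating it through $b_n(0)=0$, since everything else reduces to \Cref{p:span} and the basic degree properties of delta operators.
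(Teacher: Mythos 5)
Your proof is correct, and part (a) coincides with the paper's argument: uniqueness via \Cref{p:span} applied to the polynomial sequence $\{s_n\}_{n\in\N}$; your extra observation $Q = sDs^{-1}$ is a harmless addition, equivalent to \Cref{p:ShefferDelta}. Part (b) follows the same inductive skeleton as the paper (build the basic sequence degree by degree, resting on \Cref{p:Qa=0} and \Cref{c:Qp0}, then invoke (a) to close the bijection), but you execute the inductive step by a genuinely different mechanism. The paper expands $Qx^n$ in the already-constructed basis $\{\phi_k\}_{0\leq k<n}$ of $\P_n$, notes $c_{n-1}\neq 0$ from the degree computation, and solves the Sheffer condition explicitly, arriving at the closed recursion $\phi_n(x) = \frac{n}{c_{n-1}}\left(x^n - \sum_{k=1}^{n-1}\frac{c_{k-1}}{k}\phi_k(x)\right)$. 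You instead run rank--nullity on the restriction $Q\colon\P_{n+1}\to\P_n$: the kernel is the constants by \Cref{c:Qp0}, the image lies in $\P_n$ by degree lowering, so a dimension count gives surjectivity, hence a preimage of $nb_{n-1}$ exists, and the normalization $b_n(0)=0$ isolates it uniquely; the degree-lowering property then forces $\deg b_n = n$. Your route is shorter and more conceptual, making transparent why the three basic-set conditions pin down $b_n$. What the paper's explicit construction buys, and yours does not directly, is the recursive formula itself: it is reused later, since the proof of \Cref{p:unit} (a $\Rightarrow$ b) appeals to ``the construction of $\phi_n(x)$ we made in the proof of \Cref{p:uniq}'' to read off the leading coefficient $[x^n]\phi_n(x)=c^{-n}$. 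That information is recoverable from your argument with one extra line (compare leading coefficients in $Qb_n = nb_{n-1}$ using $[x^{n-1}]Qx^n = nc$ from the proof of \Cref{p:Qa=0}), so nothing essential is lost, but it is worth noting the dependency.
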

\begin{proof}
    \hfill
    \begin{alphabetize}
        \item If $R$ also serves as a delta operator for $s$, then, in particular, for all $n\in\N$, $Q s_n(x) = R s_n(x)$. Since $\{s_n(x)\}_{n\in\N}$ forms a polynomial sequence, by \Cref{p:span}, $R = Q$.
        \item We construct by induction the basic polynomial sequence $\{\phi_n(x)\}_{n\in\N}$ for $Q$. The initial term is uniquely imposed by its definition. Assume, for $n \in \N^*$, that $\{\phi_k\}_{0\leq k<n}$ is uniquely determined and satisfies the basic set conditions. Given that it constitutes a basis for $\C_{n-1}[x]$, the expression $Q x^n \in \C_{n-1}[x]$ (by \Cref{p:Qa=0}) can be uniquely decomposed with respect to the basis
        \[
        Qx^n  = \sum_{k=0}^{n-1} c_k \phi_k(x), \quad c_{n-1} \neq 0.
        \]
        Should $\phi_n(x)$ meet the criteria of the Sheffer condition, it can be reformulated as follows
        \[
        Q\pa{x^n - \sum_{k=0}^{n-1} \frac{c_k}{k+1} \phi_{k+1}(x)} = 0,
        \]
        wherein, by \Cref{c:Qp0}, the operand of $Q$ must be constant. It is chosen to be 0, ensuring that $\phi_n(0) = 0$ and thereby uniquely defining $\phi_n(x)$ as
        \[
        \phi_n(x) = \frac{n}{c_{n-1}}\pa{x^n - \sum_{k=1}^{n-1} \frac{c_{k-1}}{k} \phi_k(x)}.
        \]
        Consequently, we have constructed the unique basic sequence for $Q$. In particular, it is a Sheffer sequence. Therefore, by (a), $Q$ is established as its unique delta operator, thus confirming the bijection.
    \end{alphabetize}
\end{proof}

\begin{example}
    The basic set for $D$ is evidently the monomials $\{x^n\}_{n\in\N}$. This generalizes straightforwardly for the delta operator $D/\lambda$, where $\lambda \neq 0$. Its basic set is verified to be $\{(\lambda x)^n\}_{n\in\N}$. Since this transformation constitutes a scaling action, we refer to the umbral operator $\str\lambda \leadsto D/\lambda$ as the \emph{stretch operator}.
    
    We define $\varphi$ via the relation $\varphi\leadsto\Delta$. It can be verified that $\varphi_n(x) = (x)_n$, the falling factorials, since $\Delta (x)_n = n (x)_{n-1}$. Later, we shall develop tools to derive expressions for basic sets, eliminating the need to guess their expression.
\end{example}

\begin{proposition}\label{p:unit}
    If $\phi \leadsto Q$, then these statements are equivalent
    \begin{alphabetize}
        \item $Q x = c$.
        \item $[x^n]\phi_n(x) = c^{-n}$.
        \item $\phi_1(x) = x/c$.
    \end{alphabetize}
\end{proposition}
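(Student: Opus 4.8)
The plan is to prove the cyclic chain of implications (a) $\Rightarrow$ (b) $\Rightarrow$ (c) $\Rightarrow$ (a), since each individual step is short. Throughout I will write $a_n \defeq [x^n]\phi_n(x)$ for the leading coefficient of the $n$-th basic polynomial, and I will lean on the two features of an umbral operator that are relevant here: the Sheffer recursion $Q\phi_n(x) = n\phi_{n-1}(x)$ and the normalizations $\phi_0(x) = 1$ and $\phi_n(0) = 0$ for $n > 0$.

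The key ingredient — and the part I expect to require the most care — is extracting the leading coefficient of $Q$ acting on a monomial, namely the identity $[x^{n-1}]Qx^n = nc$, where $c = Qx$. This is already implicit in the proof of \Cref{p:Qa=0}: writing $q(x) = Qx^n$, shift-invariance gives $q(a) = \E q(x+a) = \sum_{k=0}^n \binom{n}{k} a^k \E Qx^{n-k}$, and reading off the coefficient of $a^{n-1}$ yields $\binom{n}{n-1}\E Qx = nc$. Since $q(a)$ is simply $Qx^n$ evaluated at $a$, this coefficient is exactly $[x^{n-1}]Qx^n$.

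For (a) $\Rightarrow$ (b) I would feed this into the Sheffer recursion and compare top-degree coefficients. Writing $\phi_n(x) = a_n x^n + (\text{lower order})$ and applying $Q$, the lower-order terms drop below degree $n-1$ (by \Cref{p:Qa=0}), so $[x^{n-1}]Q\phi_n(x) = a_n \cdot nc$; on the other hand $[x^{n-1}]\, n\phi_{n-1}(x) = n a_{n-1}$. Equating gives the recursion $a_n = a_{n-1}/c$, and since $a_0 = 1$ from $\phi_0(x) = 1$, induction delivers $a_n = c^{-n}$.

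The remaining two implications are immediate. For (b) $\Rightarrow$ (c), the case $n=1$ of (b) gives $[x^1]\phi_1(x) = c^{-1}$; combined with $\deg\phi_1 = 1$ and $\phi_1(0) = 0$, this forces $\phi_1(x) = x/c$. For (c) $\Rightarrow$ (a), I apply the Sheffer recursion at $n=1$, namely $Q\phi_1(x) = \phi_0(x) = 1$, and substitute $\phi_1(x) = x/c$ to obtain $c^{-1}Qx = 1$, hence $Qx = c$. This closes the loop. The only genuine subtlety is the leading-coefficient computation of the second paragraph; once it is in hand, everything else is bookkeeping with the recursion and the normalizations.
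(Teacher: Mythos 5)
Your proof is correct and follows essentially the same route as the paper: the identical cycle (a)~$\Rightarrow$~(b)~$\Rightarrow$~(c)~$\Rightarrow$~(a), resting on the same key fact $[x^{n-1}]Qx^n = nc$ from the proof of \Cref{p:Qa=0}, with (b)~$\Rightarrow$~(c) and (c)~$\Rightarrow$~(a) matching the paper's one-line arguments. The only cosmetic difference is in (a)~$\Rightarrow$~(b): the paper extracts the recursion $a_n = a_{n-1}/c$ from the inductive construction in \Cref{p:uniq}, whereas you obtain it directly by comparing leading coefficients in the Sheffer relation $Q\phi_n = n\phi_{n-1}$ --- the same computation, slightly more self-contained.
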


In particular, we have the equivalence between $\phi_1(x) = x$ and the unitarity of either $Q$, $\phi$ or $\phi^{-1}$.

\begin{proof}
    \hfill
    \begin{itemize}
        \item[(c $\imp$ a)] Follows from $Qx = Q c\phi_1(x) = c\phi_0(x) = c$.
        \item[(b $\imp$ c)] With $n=1$ and the fact that $\phi_1(0)=0$.
        \item[(a $\imp$ b)] This follows from the construction of $\phi_n(x)$ we made in the proof of \Cref{p:uniq} combined with the fact that $[x^{n-1}] Q x^n = cn$ (as shown in the proof of \Cref{p:Qa=0} (b)).
    \end{itemize}
\end{proof}

A primary focus of this paper is the characterization of operator families through operational identities. This is straightforwardly achieved for Sheffer operators through a commutation identity.

\begin{proposition}\label{p:ShefferDelta}
    An invertible operator $s$ is a Sheffer operator for $Q$ if and only if $Qs = sD$.
\end{proposition}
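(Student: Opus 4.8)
The plan is to verify the operator identity by testing it on the monomial basis and invoking \Cref{p:span}. Since $\{x^n\}_{n\in\N}$ is a polynomial sequence, two operators of $\L(\P)$ coincide as soon as they agree on every $x^n$ for positive $n$. I will therefore compare the actions of $Qs$ and $sD$ on monomials, recalling that by definition $s_n(x) = s x^n$.

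The computation is immediate. On the one hand, $Qs\,x^n = Q s_n(x)$. On the other hand, using linearity of $s$ together with $D x^n = n x^{n-1}$, \[ sD\,x^n = s\bra{n x^{n-1}} = n\,s x^{n-1} = n\, s_{n-1}(x). \] Hence, for each $n$, the equality $Qs\,x^n = sD\,x^n$ is literally the Sheffer condition $Q s_n(x) = n s_{n-1}(x)$.

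It remains to package this as an equivalence. By \Cref{p:span}, $Qs = sD$ holds if and only if $Qs\,x^n = sD\,x^n$ for all positive integers $n$, which by the previous paragraph is exactly the statement that $s$ is a Sheffer operator for $Q$; this settles both implications at once. I expect no real obstacle: the entire content is the translation $D x^n = n x^{n-1}$, and invertibility of $s$ plays no role in the equivalence itself (it is only part of the standing hypothesis that $s$ be a Sheffer operator). The one point worth a line is that \Cref{p:span} only demands agreement on monomials for $n \ge 1$; the case $n = 0$ is consistent anyway, since $s_0(x)$ is a nonzero constant and so $Q s_0(x) = 0 = sD\,x^0$ by \Cref{p:Qa=0}.
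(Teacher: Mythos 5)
Your proof is correct and takes essentially the same route as the paper: the paper's own proof is exactly the identification $Qs\,x^n = Q s_n(x)$ and $sD\,x^n = n\,s_{n-1}(x)$, with the equivalence concluded by agreement on the monomial basis via \Cref{p:span}. Your additional remark on the $n=0$ case (handled by \Cref{p:Qa=0}) is a harmless refinement of a point the paper leaves implicit.
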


\begin{proof}
    This set of equalities essentially proves the equivalence
    \[
    \begin{array}{ccc}
        Q s_n(x) &=& n s_{n-1}(x) \\
        \vertequal & & \vertequal \\[-3pt]
        Q sx^n &=& s D x^n.
    \end{array}
    \]
\end{proof}

The fact that the monomials form a basic set for $D$ and are also of binomial type is not a coincidence. We will show that all basic sets are of binomial type, but also that the converse holds as well.

\begin{theorem}[Rota {\cite[Thm.~1]{rota1973}}]\label{t:deltabinom}
        $\{p_n(x)\}_{n\in\N}$ is a basic set for some delta operator $Q$ if and only if $\{p_n(x)\}_{n\in\N}$ is a binomial type polynomial sequence.
\end{theorem}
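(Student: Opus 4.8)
The plan is to reduce everything to the bivariate characterization of \Cref{p:px+y}: the sequence $\{\phi_n\}$ is of binomial type exactly when $\phi_{x+y} = \phi_x\phi_y$. Using the general commutation identity $E_x^y U_x = U_{x+y}E_x^y$ that drives \Cref{p:ux=ux+y}, I rewrite $\phi_{x+y} = E_x^y\phi_x E_x^{-y}$, so that the binomial-type identity becomes equivalent to
\[
W \defeq \phi_x^{-1} E_x^y \phi_x = \phi_y E_x^y.
\]
Call this identity $(\star)$. It will be the single hinge for both implications: the entire proof amounts to showing that $(\star)$ holds if and only if $Q \defeq \phi D\phi^{-1}$ is a delta operator.

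First I record the normalization, which is available in \emph{both} directions. For a basic set, $\phi_n(0)=\delta_{n,0}$ is the definition; from binomial type it follows by setting $y=0$ in \Cref{e:binom} and using the linear independence of $\{p_k\}$ (the same substitution forces $p_0=1$, since $p_0\in\hP_0$ must be a nonzero constant). Hence $\E\phi = \E$ by \Cref{p:span}, and therefore $\E\phi^{-1}=\E$ too. A short computation then shows that the two sides of $(\star)$ always agree after applying $f\mapsto\E_x f$ to monomials: since $\E_x\phi_x^{-1}=\E_x$, I get $\E_x Wx^k = \E_x\phi_k(x+y) = \phi_k(y)$, while $\E_x(\phi_y E_x^y)x^k = \phi_y\,\E_x(x+y)^k = \phi_k(y)$.

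The crux is to upgrade this pointwise agreement into operator equality, and this is where shift-invariance enters. I will use a small determination lemma, proved from \Cref{p:span} by the argument of \Cref{p:Qa=0}(b): a shift-invariant operator $S$ in the variable $x$ is determined by the scalars $\E_x Sx^k$, because if these all vanish then $(Sx^n)(a)=\E E^a Sx^n = \E SE^a x^n = \sum_k\binom{n}{k}a^k\,\E Sx^{n-k}=0$ for all $a$, forcing $S=0$. Now $\phi_y E_x^y$ is manifestly shift-invariant in $x$. For $W$, note $\phi^{-1}Q\phi = D$, and that $Q$ being shift-invariant is equivalent to $Q$ commuting with $D$ on $\P$ (hence with $E_x^y=\sum_k\frac{y^k}{k!}D_x^k$); conjugating by $\phi$, this is precisely the statement that $W$ commutes with $D_x$, i.e. that $W$ is shift-invariant in $x$. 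Combined with the evaluation above and the determination lemma, under the standing normalization we obtain $Q$ shift-invariant $\iff W=\phi_y E_x^y \iff (\star) \iff$ binomial type.

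It remains to assemble the two directions. If $\phi\leadsto Q$, then $Q$ is a delta operator, so $Q$ is shift-invariant and $Q=\phi D\phi^{-1}$ by \Cref{p:ShefferDelta}; the equivalence yields binomial type. Conversely, given binomial type, set $Q\defeq\phi D\phi^{-1}$: the equivalence makes $Q$ shift-invariant, while $\phi_1(x)=ax$ with $a\neq0$ (from $\phi_1\in\hP_1$ and $\phi_1(0)=0$) gives $Qx=\phi D\phi^{-1}x = \phi D(x/a) = 1/a \neq 0$, so $Q$ is a delta operator; together with the normalization and $Q\phi=\phi D$, this exhibits $\phi$ as basic for $Q$. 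The main obstacle is not conceptual but is the operator bookkeeping around the bivariate shift $E_x^y$ — keeping straight which variable each factor acts on when moving between $W$, its commutation with $D_x$, and the evaluation at $x=0$; the determination lemma and the equivalence ``shift-invariant $\iff$ commutes with $D$'' are the two auxiliary facts that let the single identity $(\star)$ carry the whole argument.
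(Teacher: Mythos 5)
Your proof is correct, and it takes a genuinely different route from the paper's. The paper proves the forward direction by induction on $n$: from $Qp_n(x+y) = np_{n-1}(x+y)$ and the induction hypothesis it shows that $Q$ annihilates the difference of the two sides of \cref{e:binom}, then uses \Cref{c:Qp0} and evaluation at $x=0$ to identify the constant; for the converse it defines $Q$ on the basis by $Qp_n \defeq np_{n-1}$ and verifies $Qx \neq 0$ and shift-invariance directly from the binomial identity. You instead collapse both directions into a single equivalence: binomial type $\iff$ $(\star)$ (via \Cref{p:px+y} and the commutation $E_x^y\phi_x = \phi_{x+y}E_x^y$), and $(\star)$ $\iff$ shift-invariance of $Q \defeq \phi D\phi^{-1}$ (via conjugation, the criterion ``shift-invariant $\iff$ commutes with $D$'', and your determination lemma). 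This is arguably more in the operational spirit the paper advocates: it avoids induction, the conjugation $Q = \phi D \phi^{-1}$ is exactly \Cref{p:ShefferDelta}, and your determination lemma is precisely the germ of the Expansion theorem (\Cref{t:expansion}). What the paper's argument buys in exchange is strict self-containedness: it uses only material established before the theorem.

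Two caveats, neither fatal. First, you cite \Cref{p:shiftD}, which in the paper comes \emph{after} this theorem and is deduced there from $E^a = e^{aD}$ (\Cref{x:E=e^D}), which is in turn obtained from the Expansion theorem --- whose proof consumes the forward direction of the very theorem you are proving. There is no substantive circularity, because $E^a = e^{aD}$ is Taylor's theorem for polynomials (the expansion argument for $Q=D$ needs only the ordinary binomial theorem), but a careful write-up should prove the commutation criterion directly rather than cite \Cref{p:shiftD}. Second, $(\star)$ is an identity of operators on $\P$ (polynomials in $x$ alone), not on all of $\K[x,y]$: the operator $W$ is $\K[y]$-linear while $\phi_y E_x^y$ is not (compare both on $yx$ with $\phi = \varphi$: one gives $xy+y^2$, the other $xy+y^2-y$). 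Your steps do respect this --- the moments, the determination lemma, and shift-invariance in $x$ all act on polynomials in $x$ --- but the domain should be stated explicitly, and in your lemma the ``scalars'' $\E_x Sx^k$ are really elements of $\K[y]$; the proof is unaffected, since a polynomial over $\K[y]$ vanishing at every $x=a\in\K$ is zero.
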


\begin{proof}
    \hfill
    \begin{itemize}
    \item[( $\imp$ )] We proceed by induction on $n$. The binomial property (\Cref{d:binom}) trivially holds for $n=0$.

    Let $n > 0$. $Qp_n(x+y) = Q E^y p_n(x) = E^y n p_{n-1}(x) = n p_{n-1}(x+y)$. Assuming the induction hypothesis for $n-1$, that is, $p_{n-1}(x+y) = \sum_{k=0}^{n-1} \binom{n-1}{k} p_k(x) p_{n-1-k}(y)$, where $x$ is the variable and $y$ serves as a parameter. Therefore, we have
    \begin{align*}
    Q p_n(x+y) &= n \sum_{k=0}^{n-1} \binom{n-1}{k} p_k(x) p_{n-1-k}(y) = \sum_{j=1}^n n\binom{n-1}{j-1} p_{j-1}(x) p_{n-j}(y) \\
    &= \sum_{j=1}^n j\binom{n}{j} p_{j-1}(x) p_{n-j}(y) = \sum_{k=1}^n \binom{n}{k} Qp_k(x) p_{n-k}(y).
    \end{align*}
    Therefore, by linearity
    \[
    Q\pa{p_n(x+y) - \sum_{k=1}^n \binom{n}{k} p_k(x) p_{n-k}(y)} = 0
    \]
     In accordance with \Cref{c:Qp0}, the argument of $Q$ must be constant. However, upon evaluation at $x=0$, this constant is identified as $p_n(y)$ since $p_k(0) = 0$ for $k > 0$. Additionally, $p_0(x) = 1$. Consequently,
    \[
    p_n(x+y) = \sum_{k=0}^n \binom{n}{k} p_k(x) p_{n-k}(y),
    \]
    thus finishing the induction.
    \item[( $\isimp$ )] Analogous to the proof of \Cref{t:bij}, we define $Q$ by $Q(p_n(x)) = np_{n-1}(x)$ and $Q(p_0(x)) = 0$ extended by linearity to all power series. In particular, because $p_1(x) = cx$ for some nonzero constant $c$, $Qx = Q p_1(x)/c = p_0(x)/c = 1/c \neq 0$. For the shift-invariance, we invoke the binomial property
    \begin{align*}
        Q E^y p_n(x) &= Q p_n(x+y) = \sum_{k=0}^n \binom{n}{k} Q p_k(x) p_{n-k}(y) = \sum_{k=0}^n \binom{n}{k} k p_{k-1}(x) p_{n-k}(y) \\
        &= \sum_{j=0}^{n-1} n \binom{n-1}{j} p_j(x) p_{n-j-1}(y) = n p_{n-1}(x+y) = E^y Q p_n(x),
    \end{align*}
    and by linearity, $QE^y = E^y Q$, confirming that $Q$ is a delta operator for $\{p_n(x)\}_{n\in\N}$.
    \end{itemize}
\end{proof}

\begin{example}[Chu-Vandermonde identity]
    The binomial property for $\{(x)_n\}_{n\in\N}$ gives
    \[
    (x+y)_n = \sum_{k=0}^n \binom{n}{k} (x)_k (y)_{n-k},
    \]
    which, in the fashion of \cref{e:convo}, can be rewritten into Chu's generalization of the Vandermonde identity
    \[
    \binom{x+y}{n} = \sum_{k=0}^n \binom{x}{k} \binom{y}{n-k}.
    \]
\end{example}

\section{Classic theorems}\label{s:Classic}

With the foundational layer of umbral calculus established, we shall now present the first set of computational tools. These results, which were central to the groundbreaking work of Rota \cite{rota1973}, include the \emph{expansion} and \emph{isomorphism} theorems, the \emph{Pincherle derivative} and the \emph{closed forms}.

\subsection{Expansion of shift-invariant operators}

Undoubtedly, one of the most important early results of umbral calculus is the Expansion Theorem. It expresses every shift-invariant operator as a power series in a delta operator. Such formulas are free from convergence issues in $\C[x]$, as their application to polynomials invariably yields a finite sum, ranging from 0 to the polynomial's degree. Remarkably, this property also applies to any power series that would typically be divergent.

\begin{theorem}[Expansion theorem. {\cite[Thm.~2]{rota1973}}]\label{t:expansion}
    $Q$ is a delta operator with basic set $\{p_n(x)\}_{n\in\N}$ if and only if for all shift-invariant operators $U$
    \[
    U = \sum_{k=0}^\infty a_k \frac{Q^k}{k!}, \quad a_k = \E U p_k(x).
    \]
\end{theorem}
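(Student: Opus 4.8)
The plan is to prove both implications by testing the claimed operator identity on the basic polynomials $\{p_n\}_{n\in\N}$ and then invoking \Cref{p:span}, which reduces any equality of operators to their agreement on a single polynomial sequence. Throughout I will use that a basic set is of binomial type (\Cref{t:deltabinom}), so that $p_n(x+y)=\sum_{k=0}^n\binom{n}{k}p_k(x)p_{n-k}(y)$, together with the defining relation $Qp_n=np_{n-1}$.

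For the forward implication, I would first evaluate the right-hand side on $p_n$. Iterating $Qp_n=np_{n-1}$ gives $Q^kp_n=\tfrac{n!}{(n-k)!}p_{n-k}$ for $k\le n$ and $0$ for $k>n$, so that
\[
\sum_{k=0}^\infty a_k\frac{Q^k}{k!}\,p_n(x)=\sum_{k=0}^n\binom{n}{k}a_k\,p_{n-k}(x),
\]
a finite sum. It then remains to show $Up_n(x)$ equals this expression. The key step is the identity $U_xp_n(x)=\E_yU_yp_n(x+y)$: writing $p_n(x+y)=E_y^xp_n(y)$ and using the shift-invariance of $U$ in the variable $y$ gives $U_yp_n(x+y)=E_y^xU_yp_n(y)$, and applying $\E_y$ returns $(Up_n)(x)$. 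Substituting the binomial-type expansion of $p_n(x+y)$ before applying $U_y$ and $\E_y$ then yields $Up_n(x)=\sum_{k=0}^n\binom{n}{k}(\E Up_{n-k})\,p_k(x)=\sum_{k=0}^n\binom{n}{k}a_k\,p_{n-k}(x)$, matching the right-hand side, and \Cref{p:span} closes this direction.

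For the converse, I would feed the assumed expansion the shift operators $E^a$ (which are shift-invariant), obtaining the generating identity $E^a=\sum_{k=0}^\infty\frac{p_k(a)}{k!}Q^k$ because $\E E^ap_k=p_k(a)$. Since the right-hand side is a power series in $Q$, it commutes with $Q$; hence $Q$ commutes with every $E^a$ and is shift-invariant by \Cref{p:ux=ux+y}. Granting for the moment that $Q$ is a genuine delta operator, the forward implication already applied to $Q$ furnishes its basic set $\{q_n\}$ and the expansion with coefficients $\E Uq_k$; comparing these with the hypothesised coefficients $\E Up_k$ on the choice $U=E^a$ gives $q_k(a)=p_k(a)$ for all $a$, whence $q_k=p_k$. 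Thus $\{p_n\}$ is the basic set of $Q$, and the bijection of \Cref{p:uniq} makes the identification unambiguous.

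I expect the main obstacle to lie in the converse, precisely in upgrading ``$Q$ is shift-invariant'' to ``$Q$ is a delta operator'': shift-invariance alone does not force $Q$ to lower degree, so one must extract from the expansion — applied to the low-degree inputs $1$ and $x$ — that $Q$ annihilates constants and satisfies $Qx=c\neq0$. Concretely, shift-invariance already constrains $Qx$ to be affine in $x$, and the generating identity applied to constants must rule out $Q1\neq0$ and guarantee the linear coefficient is nonzero. Once this degree-lowering property is secured, the forward direction together with the uniqueness in \Cref{p:uniq} completes the equivalence.
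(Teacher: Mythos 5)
Your forward direction is complete and essentially identical to the paper's: you test both operators on the basic sequence using $Q^k p_n = \frac{n!}{(n-k)!}p_{n-k}$, the binomial-type identity, and the bivariate shift-invariance identity $Up_n(x) = \E_y U_y p_n(x+y)$, then close with \Cref{p:span}. The converse also opens exactly as in the paper: specialize to $U = E^a$ and observe that $Q$ commutes with the right-hand side, hence with every $E^a$.

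The genuine gap is the step you yourself flagged and then left conditional: upgrading ``$Q$ is shift-invariant'' to ``$Q$ is a delta operator''. Saying that the identity applied to $1$ and $x$ ``must rule out $Q1\neq0$ and guarantee the linear coefficient is nonzero'' names the goal but supplies no mechanism, so the converse as written is incomplete at its only nontrivial point. The mechanism the paper uses is a basis identification in the variable $a$: apply $E^a = \sum_{k\geq0} p_k(a)Q^k/k!$ to the polynomial $x$, obtaining
\[
x + a = \sum_{k=0}^\infty p_k(a)\, \frac{Q^k x}{k!},
\]
and regard both sides as polynomials in $a$ expanded in the basis $\{p_k(a)\}_{k\in\N}$ (legitimate since $\{p_n\}_{n\in\N}$ is a polynomial sequence; the coefficients are allowed to be polynomials in $x$). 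Writing $a = \beta_0 p_0(a) + \beta_1 p_1(a)$ with $\beta_1 \neq 0$, the coefficient of $p_1(a)$ on the left is the scalar $\beta_1$, while on the right it is $Qx$; hence $Qx = \beta_1$ is a nonzero \emph{constant}, which combined with shift-invariance is exactly \Cref{d:deltaop}. Your alternative route (affineness of $Qx$ from shift-invariance, then killing $Q1$) can indeed be completed by the same identification applied to the constant polynomial $1$, but it still has to be written down. A smaller point: comparing your two expansions of $E^a$ to deduce $q_k(a) = p_k(a)$ silently assumes that the coefficients in an expansion $\sum_k c_k Q^k/k!$ are unique; either justify this (apply the operator to $q_n$ and evaluate at $0$ to extract $c_n$), or avoid it as the paper does by applying the hypothesized expansion directly to $q_n(x)$ and setting $x=0$, which immediately yields $q_n(a) = p_n(a)$.
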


\begin{proof}
    \hfill
    \begin{itemize}
    \item[( $\imp$ )] We apply $\E U$ to the binomial identity
    \[
    \E U p_n(x+y) = \sum_{k=0}^n \binom{n}{k} \E U p_k(x) p_{n-k}(y) = \sum_{k=0}^\infty \inv{k!} \E U p_k(x) Q_y^k p_n(y),
    \]
    where the summation index is extended to infinity, as the additional terms are zero. By \Cref{p:ux=ux+y} and the inherent commutativity of operators applied to different variables, we establish
    \[
    \E_x U_x p_n(x+y) = \E_x U_y p_n(y+x) = U_y \E_x p_n(y+x) = U_y p_n(y).
    \]
    Setting $a_k = \E U p_k(x)$, we obtain the desired result by linearity.
    \item[( $\isimp$ )] Conversely, we initially show that $Q$ is a delta operator. Using the given hypothesis with $E^a$ for any $a$, gives
    \[
    E^a = \sum_{k=0}^\infty p_k(a) \frac{Q^k}{k!}. \tag{$*$}
    \]
    This demonstrates the shift-invariance of $Q$. Apply $(*)$ to $x$
    \[
    x+a = \sum_{k=0}^\infty p_k(a) \frac{Q^k}{k!} x.
    \]
    Since $\{p_n(x)\}_{n\in\N}$ forms a polynomial sequence and thus a basis, the identification of the coefficient before $p_1(a)$ leads to the conclusion that $Qx$ must be non-zero. Therefore, $Q$ is a delta operator associated with a unique basic set $\{q_n(x)\}_{n\in\N}$. Applying $(*)$ to $q_n(x)$ yields
    \[
    q_n(x+a) = \sum_{k=0}^n \binom{n}{k} p_k(a) q_{n-k}(x),
    \]
    as $\{q_n(x)\}_{n\in\N}$ is basic. Letting $x = 0$ gives $q_n(a) = p_n(a)$. Therefore, $\{p_n(x)\}_{n\in\N}$ is the basic set for $Q$.
\end{itemize}
\end{proof}

The ramifications of this theorem can be distilled to the established connection between shift-invariant operators and power series. Many analogs of operations on power series can be defined for operators, a notion that will be elaborated upon in the subsequent sections.

\begin{example}\label{x:E=e^D}
    Specifying the formula for $E^a$ in the context of the proof with $Q = D$, we derive the following expression
    \begin{equation}\label{e:E=e^D}
        E^a = \sum_{k=0}^\infty a^k \frac{D^k}{k!} = e^{aD}.
    \end{equation}
    This identity, which dates back to Lagrange \cite{lagrange1772} (see also \cite{cooper1952}), is an operator translation of Taylor's theorem. This makes the following proposition clear.
\end{example}

\begin{proposition}\label{p:shiftD}
    An operator is shift-invariant if and only if it commutes with $D$.
\end{proposition}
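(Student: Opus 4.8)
The plan is to leverage the identity $E^a = e^{aD} = \sum_{k=0}^\infty \frac{a^k}{k!} D^k$ established in \Cref{x:E=e^D}, which encodes precisely the relationship between the shift operators and $D$. Since shift-invariance is defined as commutation with every $E^a$, and $E^a$ is built out of powers of $D$, the equivalence should reduce to a bookkeeping argument about power series in the scalar $a$.

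For the forward direction, I would assume $U$ is shift-invariant, so that $U E^a = E^a U$ for every $a$. Expanding both sides via $E^a = \sum_{k} \frac{a^k}{k!} D^k$ and applying the resulting operator identity to an arbitrary polynomial $p(x)$, each side becomes a \emph{finite} polynomial in $a$ (the sum terminating at $\deg p$). Equating the coefficients of $a^1$ then yields $U D p(x) = D U p(x)$, and since $p$ was arbitrary, $UD = DU$ by \Cref{p:span}.

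For the converse, I would assume $UD = DU$. A routine induction gives $U D^k = D^k U$ for all $k \in \N$, so $U$ commutes term-by-term with the series defining $E^a$; hence $U E^a = E^a U$ for every $a$, which is exactly shift-invariance.

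The only delicate point — and it is minor — is justifying the coefficient comparison in the forward direction: one must observe that applying the operator equation $U E^a = E^a U$ to a fixed polynomial produces a genuine polynomial identity in the scalar variable $a$ valid for all $a$, so that matching the coefficients of like powers of $a$ is legitimate. Everything else is formal manipulation of the exponential series.
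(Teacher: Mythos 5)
Your proof is correct and follows essentially the same route as the paper: both directions expand $E^a = e^{aD}$, extract the coefficient of $a$ to get $UD = DU$ in the forward direction, and reassemble the exponential series from $UD^k = D^kU$ for the converse. Your extra remark about applying the operator identity to a fixed polynomial to justify the coefficient comparison is a sensible (if implicit in the paper) clarification, but it does not change the argument.
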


\begin{proof}
    \hfill
    \begin{itemize}
    \item[( $\imp$ )] For an operator $U$, expanding $e^{aD} U = U e^{aD}$ allows us to identify coefficients in front of $a$, revealing that $DU = UD$.
    \item[( $\isimp$ )] We generalize $DU = UD$ to $D^k U = U D^k$ for all positive integers $k$. Summing this equality with a factor of $a^k/k!$ lets us recover the shift-invariance.
    \end{itemize}
\end{proof}

\begin{corollary}\label{c:SI Appell}
    An operator is an Appell operator if and only if it is shift-invariant and invertible.
\end{corollary}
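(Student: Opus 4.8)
The plan is to chain together the two operator-level characterizations already established earlier in this section. By definition, an Appell operator is precisely a Sheffer operator for the delta operator $D$, so I would begin by unpacking that definition. \Cref{p:ShefferDelta} recasts the Sheffer condition as a commutation identity: an invertible operator $s$ is a Sheffer operator for $Q$ if and only if $Qs = sD$. Specializing this to $Q = D$ immediately gives that $s$ is an Appell operator if and only if $s$ is invertible and $Ds = sD$; that is, $s$ is an invertible operator commuting with $D$.

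The second step is to invoke \Cref{p:shiftD}, which asserts that an operator is shift-invariant if and only if it commutes with the derivative $D$. Substituting this equivalence into the characterization obtained from the first step replaces the condition ``$s$ commutes with $D$'' by ``$s$ is shift-invariant.'' Chaining the two biconditionals then yields that $s$ is an Appell operator if and only if $s$ is both invertible and shift-invariant, which is exactly the assertion of the corollary.

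There is no genuine obstacle here, as both constituent equivalences have already been proved; the argument is essentially a two-step substitution. The one point worth keeping in mind is bookkeeping: invertibility is built into the definition of a Sheffer (hence Appell) operator but is \emph{not} implied by shift-invariance alone, so the invertibility hypothesis must be carried explicitly through every link of the chain rather than being absorbed silently. Accordingly, I would state the final equivalence with ``invertible and shift-invariant'' written out, matching the form in which \Cref{p:ShefferDelta} supplies the commutation identity.
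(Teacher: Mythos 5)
Your proof is correct and follows exactly the paper's own route: specialize \Cref{p:ShefferDelta} to $Q = D$ to characterize Appell operators by the commutation $DA = AD$, then apply \Cref{p:shiftD} to replace commutation with $D$ by shift-invariance. Your extra remark about carrying invertibility explicitly through the chain is a sensible bookkeeping point that the paper's terser proof leaves implicit.
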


\begin{proof}
    By \Cref{p:ShefferDelta}, an Appell operator is characterized by $D A = A D$. The proof follows with \Cref{p:shiftD}.
\end{proof}

More generally than in \Cref{x:E=e^D}, the \Cref{t:expansion} ensures that every shift-invariant operator can be written as a power series in $D$.

\begin{definition}[Indicator]\label{d:indic}
    If $T$ is shift-invariant, we associate a power series $\tilde T$, called the indicator, defined by $\tilde T(D) := T$. It is then natural to use the composition of indicators to define a new binary operator
    \begin{itemize}
        \item $T \diamond U := (\tilde T \circ \tilde U)(D) = \tilde T(U)$.
        \item For $n \in \N$, $T^{[n]} := \tilde T^n(D)$, where the exponent is with respect to composition. If $\tilde T$ has a compositional inverse, $n$ can also be negative.
    \end{itemize}
\end{definition}
Naturally, we have the following properties.
\begin{proposition}
    \hfill
    \begin{itemize}
        \item $\diamond$ is associative and right-distributive over $+$.
        \item $D$ is the neutral element with respect to $\diamond$.
        \item $T^{[n]}\diamond T^{[m]} = T^{[n+m]}$.
        \item $(T^{[n]})^{[m]} = T^{[nm]}$.
    \end{itemize}
\end{proposition}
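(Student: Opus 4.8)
The plan is to reduce all four assertions to standard facts about formal power series under composition, using the indicator map $T \mapsto \tilde T$ as a dictionary. By the Expansion Theorem (\Cref{t:expansion}), this map is a bijection between shift-invariant operators and formal power series in $D$; in particular it is injective, so two shift-invariant operators coincide exactly when their indicators do. First I would record that this dictionary intertwines the relevant operations: it is additive, $\widetilde{T+U} = \tilde T + \tilde U$, since $T+U = \tilde T(D) + \tilde U(D) = (\tilde T + \tilde U)(D)$; it sends $\diamond$ to composition, $\widetilde{T \diamond U} = \tilde T \circ \tilde U$, which is immediate from $T \diamond U = \tilde T(U)$ because $\tilde T(\tilde U(D)) = (\tilde T \circ \tilde U)(D)$; it sends $T^{[n]}$ to the $n$-fold compositional iterate $\tilde T^{\,n}$ by definition; and it sends $D$ to the identity series $\mathrm{id}\colon x \mapsto x$, since $\tilde D(D) = D$ forces $\tilde D = \mathrm{id}$.

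With this dictionary in place, each claim is the transport under the inverse map of a familiar identity for power series. For associativity I would apply the indicator to both $(T \diamond U)\diamond V$ and $T \diamond (U \diamond V)$, obtaining $\tilde T \circ \tilde U \circ \tilde V$ in either case, composition of series being associative; injectivity then yields the operator identity. Right-distributivity follows the same way from $(\tilde T + \tilde U)\circ \tilde V = \tilde T \circ \tilde V + \tilde U \circ \tilde V$, the one-sided distributivity of composition over addition (note that left-distributivity genuinely fails, which is why only the right-hand version is asserted). The neutrality of $D$ is the neutrality of $\mathrm{id}$, namely $\tilde T \circ \mathrm{id} = \tilde T = \mathrm{id} \circ \tilde T$. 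Finally, the two exponent laws are precisely $\tilde T^{\,n} \circ \tilde T^{\,m} = \tilde T^{\,n+m}$ and $(\tilde T^{\,n})^{m} = \tilde T^{\,nm}$ for iterated composition; these hold for $n,m \in \N$ and extend to negative values exactly when $\tilde T$ is compositionally invertible, as already flagged in \Cref{d:indic}.

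Since the whole argument is transport of structure, I do not expect a deep obstacle; the one point demanding care is the well-definedness that makes the dictionary exact. The composition $\tilde T \circ \tilde U$ must be read as a genuine formal power series, which requires the inner series $\tilde U$ to have zero constant term (equivalently $U$ lowers degree and is thus locally nilpotent on $\P$, so that $\tilde T(U)$ acts as a finite sum on each polynomial). I would therefore state this hypothesis explicitly, check that under it $\widetilde{T \diamond U} = \tilde T \circ \tilde U$ really holds as an identity of operators on $\P$, and note that the inner series occurring in associativity and in the exponent laws all have positive order, so no convergence issue arises. Once this bookkeeping is settled, the four identities are immediate consequences of the corresponding identities for composition of formal power series.
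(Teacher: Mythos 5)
Your proof is correct and is essentially the paper's own (implicit) justification: the paper states this proposition without proof, the intended argument being exactly your transport of structure along the indicator map $T \mapsto \tilde T$, which the paper later formalizes as the isomorphism theorem (\Cref{t:iso}). Your additional care about well-definedness --- that the inner series must have positive order for the composition $\tilde T \circ \tilde U$ and hence $T \diamond U$ to make sense as a finite sum on each polynomial --- is a genuine refinement of a point the paper leaves tacit in \Cref{d:indic}.
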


\begin{example}
    If we look back at \Cref{x:Delta,x:E=e^D}, then we find that $\tilde\Delta(t) = e^t - 1$. 
\end{example}

Since two power series in same argument commute, the subsequent corollary follows.
\begin{corollary}[{\cite[Cor.~3.4]{rota1973}}]\label{c:commute}
    Shift-invariant operators commute.
\end{corollary}

The expansion theorem enables us to define operator division for non-invertible shift-invariant operators. Division of operators has been extensively utilized by Steffensen \cite{steffensen1941} and others, albeit without a proper definition. We will later demonstrate a proper definition is crucial in order to avoid false statements.

\begin{definition}[Operator division]\label{d:division}
    For $k\geq0$, let $U = D^k P$ and $V = D^k R$ be two shift-invariant operators with $R$ being invertible ($R$ is thus an Appell operator). Then we define
    \[
    U/V = \frac{U}{V} := P R^{-1}.
    \]
\end{definition}

One natural property that this division still carries is the following:
\begin{equation}\label{e:(U/V)V}
    \frac{U}{V} V = U.
\end{equation}

\subsection{The isomorphism theorem}

The isomorphism theorem is a fundamental result that is crucial to state explicitly, although it often becomes implicitly used in practice. It was first properly formulated in the work of Mullin and Rota \cite{mullin1970}, despite being extensively used by earlier authors such as Pincherle \cite{pincherle1901} and Steffensen \cite{steffensen1941}. It serves as a crucial tool for applying the known results of power series.

\begin{theorem}[Isomorphism theorem {\cite[Thm.~3]{rota1973}}]\label{t:iso}
    If $Q$ is a delta operator and $f$ a formal power series, the application $f \mapsto f(Q)$ is a ring isomorphism that maps the ring of formal power series $\C\bbra{x}$ to the ring of shift-invariant operators.
\end{theorem}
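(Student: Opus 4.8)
The plan is to verify, in turn, that $f \mapsto f(Q)$ is well-defined, lands in the ring of shift-invariant operators, is a ring homomorphism, and is bijective, with surjectivity supplied directly by the Expansion theorem. First I would check that $f(Q)$ is a well-defined operator on $\P$. Writing $f(t) = \sum_{k\geq 0} c_k t^k$, the expression $f(Q) = \sum_{k\geq 0} c_k Q^k$ is a priori an infinite sum, but by \Cref{p:Qa=0}(b) each application of $Q$ strictly lowers the degree, so $Q^k p(x) = 0$ whenever $k > \deg p(x)$. Hence on any fixed polynomial the sum is finite and $f(Q)$ is a genuine linear operator. Moreover each $Q^k$ is shift-invariant, since a composition of operators commuting with every $E^a$ again commutes with every $E^a$; therefore $f(Q)$, being a pointwise-finite combination of the $Q^k$, is shift-invariant as well, and the image lies in the target ring.

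Next I would establish the ring-homomorphism property. Additivity and the fact that the constant series $1$ maps to the identity operator are immediate. For multiplicativity I would exploit that all powers of a single operator commute, $Q^i Q^j = Q^{i+j} = Q^j Q^i$ (also a consequence of \Cref{c:commute}), so that composing $f(Q)$ with $g(Q)$ reorganizes exactly as the Cauchy product: if $f = \sum c_i t^i$ and $g = \sum d_j t^j$, then $f(Q)\,g(Q) = \sum_n \bra{\sum_{i+j=n} c_i d_j}\, Q^n = (fg)(Q)$, the rearrangement being legitimate because on each polynomial only finitely many terms survive. This is the one place where genuine care is needed, as operators do not commute in general; the argument works precisely because we substitute a single fixed $Q$.

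Finally I would prove bijectivity. Surjectivity is exactly the content of the Expansion theorem (\Cref{t:expansion}): every shift-invariant $U$ equals $\sum_{k\geq0} a_k Q^k/k!$ with $a_k = \E U p_k(x)$, so $U = f(Q)$ for the series $f(t) = \sum_k (a_k/k!)\, t^k$. For injectivity I would recover the coefficients of $f$ by testing against the basic set $\{p_n\}_{n\in\N}$: since $Q^k p_n(x) = \frac{n!}{(n-k)!}\, p_{n-k}(x)$ and $\E p_m(x) = p_m(0)$ vanishes for $m>0$ while $p_0(x) = 1$, evaluation at zero isolates a single term, giving $\E f(Q) p_n(x) = c_n\, n!$. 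Hence $f(Q) = 0$ forces every $c_n = 0$, so the map is injective, and the two directions together yield the claimed ring isomorphism. I expect the main obstacle to be not any single hard computation but the honest bookkeeping around multiplicativity together with a faithful invocation of the Expansion theorem, which carries the real weight of surjectivity.
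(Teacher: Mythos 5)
Your proof is correct, and it supplies exactly the details the paper omits (the paper's entire proof is the sentence ``The proof is straightforward''). Your route --- finiteness of $f(Q)$ on polynomials via degree-lowering, multiplicativity as a Cauchy product of powers of a single commuting operator, surjectivity from the Expansion theorem (\Cref{t:expansion}), and injectivity by extracting coefficients with $\E f(Q) p_n(x) = c_n\, n!$ against the basic set --- is the standard argument the paper implicitly relies on, going back to Rota's original proof.
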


The proof is straightforward. The ring of shift-invariant operators can thus be identified with $\C\bbra{Q}$ for any delta operator $Q$, and thus, most simply as $\C\bbra{D}$. 

Now, we can freely apply the known properties of multiplicative and compositional inverses in the ring of formal power series to their operational analogs.

\begin{proposition}[{\cite[Cors.~3.1, 3.2 and Prop.~4.4]{rota1973}}]\label{p:inverses}
    \hfill
    \begin{alphabetize}
        \item A shift-invariant operator $T$ is invertible if and only if \ $T1 \neq 0$.
        \item $Q$ is a delta operator if and only if $Q = D A$, for some Appell operator $A$.
        \item A shift-invariant operator $Q$ is a delta operator if and only if its indicator $\tilde Q$ has a power series compositional inverse $\tilde Q^{-1}$. In this case, $Q^{[-1]}$ is a well-defined delta operator.
    \end{alphabetize}
\end{proposition}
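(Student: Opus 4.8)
The plan is to route all three statements through the Isomorphism Theorem (\Cref{t:iso}) applied to the canonical delta operator $D$: the map $f \mapsto f(D)$ is a ring isomorphism from $\tP$ onto the shift-invariant operators, so each operator-theoretic question below becomes a routine statement about the formal power series ring $\tP$. The one computational fact I will use repeatedly is that applying a shift-invariant operator $T = \tilde T(D) = \sum_k c_k D^k$ to the constant $1$ kills every term with $k \geq 1$, so that $T1 = c_0 = \tilde T(0)$, the constant term of the indicator.

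For (a), since $f \mapsto f(D)$ is a ring isomorphism, $T$ is invertible as an operator if and only if its indicator $\tilde T$ is a unit in $\tP$. The classical description of units in a formal power series ring says $\tilde T$ is a unit precisely when $\tilde T(0) \neq 0$. Combining this with $T1 = \tilde T(0)$ yields the equivalence $T \text{ invertible} \iff \tilde T(0) \neq 0 \iff T1 \neq 0$.

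For (b), I first record that any shift-invariant $Q$ expands as $\tilde Q(D) = \sum_k c_k D^k$ with $Q1 = c_0$ and $Qx = c_0 x + c_1$, so the delta condition ``$Qx = c$ for a nonzero constant $c$'' forces $c_0 = 0$ and $c_1 = c \neq 0$. For the forward direction I factor $\tilde Q(t) = t\,\tilde A(t)$ with $\tilde A(t) = c_1 + c_2 t + \cdots$, whence $\tilde A(0) = c_1 \neq 0$; by (a) the operator $A = \tilde A(D)$ is invertible, and being shift-invariant it is Appell by \Cref{c:SI Appell}, while $Q = DA$ by construction. For the converse, if $Q = DA$ with $A$ Appell, then $A$ is shift-invariant and invertible (\Cref{c:SI Appell}), so $Q$ is a product of commuting shift-invariant operators (\Cref{c:commute}) and hence shift-invariant; computing $Qx = DAx = ADx = A1 \neq 0$ (using commutativity and $A1 \neq 0$ from (a)) shows $Q$ is a delta operator.

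For (c), the expansion in (b) shows that a shift-invariant $Q$ is a delta operator exactly when its indicator satisfies $\tilde Q(0) = 0$ and $\tilde Q'(0) = c_1 \neq 0$, i.e.\ $\ord \tilde Q = 1$. This is precisely the classical criterion for a formal power series over a characteristic-zero field to admit a compositional inverse, which gives the stated equivalence. Finally, if $\tilde Q^{-1}$ exists it again has order $1$ (its linear coefficient is $c_1^{-1}$), so $Q^{[-1]} = \tilde Q^{-1}(D)$ is well-defined and, being shift-invariant with an order-one indicator, is itself a delta operator. The only genuine step is part (a): once operator invertibility is correctly transported to the constant-term condition on indicators, parts (b) and (c) reduce to bookkeeping about orders of power series and about their multiplicative and compositional inverses.
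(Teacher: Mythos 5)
Your proposal is correct and takes essentially the same route as the paper: both arguments pass everything through the isomorphism theorem, characterize delta operators by their indicator having order one, factor $\tilde Q(t) = t\,\tilde A(t)$ to get (b), and invoke the classical order-one criterion for compositional invertibility to get (c). The only cosmetic differences are the ordering (the paper proves (a), then (c), then (b)) and that you verify $\tilde Q(0) = 0$, $\tilde Q'(0) \neq 0$ by directly expanding $Qx = c_0 x + c_1$ rather than citing \Cref{p:Qa=0}.
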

    
\begin{proof}
    \hfill
    \begin{itemize}
    \item[(a)] The power series $\tilde T$ has a multiplicative inverse if and only if $\tilde T(0) \neq 0$. However, $\tilde T(0) = T 1$, so by the isomorphism theorem (\Cref{t:iso}), the result is proved.
    \item[(c)] If $Q$ is a delta operator, then $Q 1 = 0$ and $Q x = c \neq 0$ following \Cref{p:Qa=0} and \Cref{d:deltaop}. This is equivalent to $\tilde Q(0) = 0$ and $\tilde Q'(0) = c \neq 0$, that is $\tilde Q$ is of order $1$. This is in turn also equivalent to $\tilde Q$ having a power series compositional inverse $\tilde Q^{-1}$.
    \[
    Q^{[-1]} x = (\tilde Q^{-1})'(0) = \inv{\tilde Q'(\tilde Q^{-1}(0))} = \inv{\tilde Q'(0)} = \inv c \neq 0,
    \]
    thus $Q^{[-1]}$ is a delta operator.
    \item[(b,]
    \begin{itemize}
    \item[ $\imp$ )] Consequently, we also factor out an $x$ and write $\tilde Q(x) = x f(x)$. So by the isomorphism theorem (\Cref{t:iso}), $Q = D f(D)$. Moreover, $f(0) = c \neq 0$. So by (a), $f(D)$ is an Appell operator.
    \item[( $\isimp$ )] If $Q = D A$, with $A$ an Appell operator, then $Q$ is shift-invariant and $Q x = D A x  = A 1$ a nonzero constant. So by \Cref{d:deltaop}, $Q$ is a delta operator.
    \end{itemize}
    \end{itemize}
\end{proof}

By the definition of operator division (\Cref{d:division}), point (b) of \Cref{p:inverses} and the more concise characterization of \Cref{c:SI Appell}, we can infer the following corollary.

\begin{corollary}\label{c:Q/R}
    If $Q$ and $R$ are delta operators, then $Q/R$ is an Appell operator, and
    \[
    (Q/R)^{-1} = R/Q.
    \]
\end{corollary}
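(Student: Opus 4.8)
The plan is to reduce everything to the factorization of delta operators from \Cref{p:inverses}(b) and then unwind the definition of operator division. First I would write $Q = DA$ and $R = DB$, where $A$ and $B$ are Appell operators; this is exactly the content of \Cref{p:inverses}(b). Since, by \Cref{c:SI Appell}, Appell operators are precisely the shift-invariant invertible operators, both $A$ and $B$ are invertible and shift-invariant.

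Next, applying \Cref{d:division} with $k=1$, numerator core $A$, and the invertible denominator core $B$, gives $Q/R = AB^{-1}$ directly. It then remains to identify this as an Appell operator. The inverse $B^{-1}$ of a shift-invariant invertible operator is again shift-invariant (from $BE^a = E^aB$ one obtains $E^aB^{-1} = B^{-1}E^a$ upon multiplying by $B^{-1}$ on both sides) and invertible, and the composite $AB^{-1}$ of two invertible shift-invariant operators is again invertible and shift-invariant. Hence, by \Cref{c:SI Appell}, $Q/R = AB^{-1}$ is an Appell operator.

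Finally, for the inverse formula I would compute $(Q/R)^{-1} = (AB^{-1})^{-1} = BA^{-1}$ by reversing the factors, and observe that applying \Cref{d:division} to $R = DB$ and $Q = DA$ (again with $k=1$, now with the invertible denominator core $A$) yields $R/Q = BA^{-1}$. Comparing the two expressions gives $(Q/R)^{-1} = R/Q$, as claimed.

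There is no substantial obstacle; the argument is a direct translation through \Cref{p:inverses}(b), \Cref{c:SI Appell}, and \Cref{d:division}. The only points deserving care are verifying that $B^{-1}$ is shift-invariant so that $AB^{-1}$ falls under the scope of \Cref{c:SI Appell}, and checking that the choice $k=1$ legitimately matches the hypotheses of the division definition — which it does, since each delta operator carries exactly one factor of $D$ and an invertible Appell core.
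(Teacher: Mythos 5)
Your proof is correct and follows exactly the route the paper intends: factor $Q = DA$, $R = DB$ via \Cref{p:inverses}(b), apply \Cref{d:division} with $k=1$ to get $Q/R = AB^{-1}$ and $R/Q = BA^{-1}$, and identify these as mutually inverse Appell operators via \Cref{c:SI Appell}. The paper leaves these details to the reader; you have merely spelled them out.
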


\begin{example}\label{x:Bern}
    The \emph{Bernoulli operator} defined by $\Bern := D/\Delta$ is an Appell operator that maps monomials to the \emph{Bernoulli polynomials} : $\Bern x^n = \Bern_n(x)$. One must be cautious with operator division, as it does not behave the same as the usual division. For example, we can write $\Bern = D/(E-1)$, but the equality
    \begin{equation}
        \Bern = -D \sum_{k=0}^\infty E^k,
    \end{equation}
    is meaningless as the expression diverges when applied to nonzero polynomials. However, when applied to monomials and evaluated at $0$, this intriguingly gives a way of finding the well-known regularized values for the Riemann zeta function at negative integers, famously expressible in terms of Bernoulli numbers.
\end{example}

With the necessary tools and notation in place, we are now ready to state the following theorem. Its significance lies in showing that Sheffer and umbral operators are closed under composition, while also providing explicit expressions for the corresponding delta operators.

\begin{theorem}[{\cite[Thm.~7]{rota1973}}]\label{t:Shefferclosed}
    If $\phi$ is a Sheffer operator for $Q$ and $\psi$ is a Sheffer operator for $R$, then
    \begin{alphabetize}
        \item $\psi \phi$ is a Sheffer operator for $Q \diamond R$.
        \item If $\phi \leadsto Q$ and $\psi \leadsto R$, then $\psi \phi \leadsto Q \diamond R$.
    \end{alphabetize}
    And for all $n\in\Z$,
    \begin{alphabetize}[resume]
        \item $\phi^n$ is a Sheffer operator for $Q^{[n]}$.
        \item If $\phi \leadsto Q$, then $\phi^n \leadsto Q^{[n]}$.
    \end{alphabetize}
\end{theorem}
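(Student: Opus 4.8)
The plan is to push everything through the commutation characterization of \Cref{p:ShefferDelta} — that $s$ is Sheffer for $Q$ exactly when $Qs = sD$ — together with its single most useful consequence: iterating $Q\phi = \phi D$ gives $Q^k\phi = \phi D^k$ for all $k$, hence $f(Q)\phi = \phi f(D)$ for every power series $f$ by the isomorphism theorem (\Cref{t:iso}). This intertwining relation is the engine behind all four parts. For (a) I would start from $R\psi = \psi D$ and feed $f = \tilde Q$ into the relation for the pair $(R,\psi)$: since $Q \diamond R = \tilde Q(R)$ by \Cref{d:indic}, this gives $(Q\diamond R)\psi = \tilde Q(R)\psi = \psi\tilde Q(D) = \psi Q$. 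Composing on the right with $\phi$ and using $Q\phi = \phi D$ yields $(Q\diamond R)(\psi\phi) = \psi Q\phi = (\psi\phi)D$. Since $\psi\phi$ is invertible and $Q\diamond R$ is a delta operator (its indicator $\tilde Q\circ\tilde R$ has order $1$, so \Cref{p:inverses}(c) applies), \Cref{p:ShefferDelta} closes the case.

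For (b) and (d) I would first record a clean reformulation of the umbral conditions: an invertible Sheffer operator $\phi$ is umbral iff $\E\phi = \E$. Indeed $\E\phi x^n = \phi_n(0)$ equals $\E x^n = 0^n$ precisely when $\phi_0(0)=1$ and $\phi_n(0)=0$ for $n>0$, and degree preservation upgrades $\phi_0(0)=1$ to $\phi_0 = 1$; \Cref{p:span} makes this an operator identity. Granting this, (b) is immediate, because $\E(\psi\phi) = (\E\psi)\phi = \E\phi = \E$, so $\psi\phi$ is umbral, and by (a) it is umbral for $Q\diamond R$.

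For (c) I would induct in both directions off (a), using the composition law $Q^{[m]}\diamond Q^{[n]} = Q^{[m+n]}$ (a restatement of $\tilde Q^m\circ\tilde Q^n = \tilde Q^{m+n}$), with trivial base cases $Q^{[0]} = D$ and $Q^{[1]} = Q$. The one genuinely new input is the negative direction: to show $\phi^{-1}$ is Sheffer for $Q^{[-1]}$, I feed $f = \tilde Q^{-1}$ into the intertwining relation for the pair $(Q,\phi)$, obtaining $D\phi = \tilde Q^{-1}(Q)\phi = \phi\,\tilde Q^{-1}(D) = \phi\, Q^{[-1]}$, which rearranges to $Q^{[-1]}\phi^{-1} = \phi^{-1}D$; here \Cref{p:inverses}(c) guarantees that $\tilde Q^{-1}$ exists and that $Q^{[-1]}$ is a bona fide delta operator. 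Part (d) then follows from (c) once I check $\E\phi^n = \E$ for all $n\in\Z$: for $n\geq 0$ this is repeated application of $\E\phi = \E$, and $\E\phi^{-1} = \E$ comes from right-composing $\E\phi = \E$ with $\phi^{-1}$, after which the characterization above upgrades Sheffer to umbral.

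The main obstacle I anticipate is entirely the bookkeeping around the negative exponents: one must keep $\tilde Q^{-1}$ as a \emph{compositional} (not multiplicative) inverse, verify that $Q^{[-1]}$ is a delta operator \emph{before} speaking of Sheffer-ness for it, and confirm that the $\diamond$/$[\,\cdot\,]$ algebra on indicators matches operator composition on the nose. Once the intertwining relation $f(Q)\phi = \phi f(D)$ is in hand, the rest is direct substitution.
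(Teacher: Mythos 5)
Your proposal is correct, and for parts (a) and (c) it is essentially the paper's own argument: both rest on iterating the commutation relation of \Cref{p:ShefferDelta} into the intertwining identity $f(Q)\phi = \phi f(D)$ for power series $f$, applied with $f = \tilde Q$ for (a) and $f = \tilde Q^{-1}$ for the negative half of (c); you are in fact slightly more careful than the paper in verifying that $Q \diamond R$ and $Q^{[-1]}$ are genuine delta operators before invoking \Cref{p:ShefferDelta}. Where you genuinely diverge is in (b) and (d). The paper proves (b) by writing $\phi_n(x) = a_1 x + \ldots + a_n x^n$ and computing $(\psi\phi)_n(0) = \sum_k a_k \psi_k(0) = 0$ in coordinates, and proves (d) by a detour through the binomial-type machinery: basicness of $\phi$ gives the bivariate identity $\phi_{x+y} = \phi_x \phi_y$ (via \Cref{t:deltabinom} and \Cref{p:px+y}), which is inverted to $\phi^{-1}_{x+y} = \phi^{-1}_x \phi^{-1}_y$ to conclude that $\phi^{-1}$ is basic. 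You instead isolate the operator identity $\E\phi = \E$ as a characterization of umbrality among Sheffer operators --- legitimate, since $\E \phi x^n = \phi_n(0)$, the monomials span $\P$, and degree preservation makes $\phi_0$ a constant, so $\phi_0(0)=1$ forces $\phi_0 = 1$ --- after which both closure statements collapse to one-line computations with the evaluation functional: $\E\psi\phi = \E\phi = \E$ for (b), and $\E\phi^{-1} = \E\phi\phi^{-1} = \E$ for (d). Your route is more elementary, avoiding \Cref{t:deltabinom} entirely, and is arguably closer to the operational spirit the paper advocates; what the paper's route exhibits instead is the inverted bivariate identity $\phi^{-1}_{x+y} = \phi^{-1}_x \phi^{-1}_y$, which has independent interest, while your lemma $\E\phi = \E$ is itself a reusable characterization of umbral operators that the paper never states explicitly.
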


In particular, Appell operators and umbral operators form an abelian and a non-abelian subgroup, respectively, of the set of Sheffer operators, which is itself a subgroup of $\hL$.

\begin{proof}
    \hfill
    \begin{itemize}
    \item[(a)] According to \Cref{p:ShefferDelta}, $\phi D = \tilde Q(D) \phi$ and $\psi D = R \psi $. If we express $\tilde Q(t) = \sum a_k t^k$, then
    \[
    \psi \phi D = \psi \tilde Q(D) \phi = \sum_{k=0}^\infty a_k \psi D^k \phi = \sum_{k=0}^\infty a_k R^k \psi \phi = \tilde Q(R) \psi\phi.
    \]
    Using \Cref{p:ShefferDelta} again, we conclude that $\psi\phi$ is a Sheffer operator for $\tilde Q(R) = Q \diamond R$.
    \item[(b)] Suppose that $\phi$ and $\psi$ are basic. If we write for $n>0$ $\phi_n(x) = a_1 x + \ldots + a_{n-1} x^{n-1} + a_n x^n$, then $(\psi\phi)_n(x) = \sum a_k \psi_k(x)$ and
    \[
    (\psi\phi)_n(0) = a_1 \psi_1(0) + \ldots + a_n \psi_n(0) = \phi_n(0) = 0.
    \]
    Furthermore, $(\psi\phi)_0(x) = \phi_0(x) = 1$. So, by definition, $\psi\phi$ is basic.
    \item[(c, d)]  For $n\in\N$, using (a) and (b) $n$ times suffice.
    
    For negative integers, the property $\phi^{-1} Q = D \phi^{-1}$ and the fact that $\tilde Q^{-1}$ is a power series implies that $\phi^{-1} D = \phi^{-1} \tilde Q^{-1}(Q) = \tilde Q^{-1} (D) \phi^{-1}$. So, by \Cref{p:ShefferDelta}, $\phi^{-1}$ is a Sheffer operator for $Q^{[-1]}$.
    
    If $\phi$ is basic, by \Cref{p:px+y}, for two variables $x,y$, $\phi_{x+y} = \phi_x \phi_y$. We can invert the formula and commute the operator to obtain $\phi_{x+y}^{-1} = \phi_x^{-1} \phi_y^{-1}$, therefore, $\phi^{-1}$ is basic. Now using iteratively (a) and (b) on $\phi^{-1}$ completes the proofs of (c) and (d).
    \end{itemize}
\end{proof}

\subsection{The Pincherle derivative and the closed forms}

In this section, we will introduce the \emph{Pincherle derivative}, a tool that facilitates the study of the closed forms for basic sets (\Cref{t:closedforms}). It was introduced by Pincherle \cite{pincherle1895,pincherle1897}, while the term was coined by Mullin and Rota \cite{mullin1970}.

\begin{definition}
    We call $T'$ the Pincherle derivative of an operator $T$, defined by
    \[
    T' := T \x - \x T.
    \]
\end{definition}

\begin{proposition}[{\cite[Props.~4.1 and 4.2]{rota1973}}]\label{p:deriv}
    \hfill
    \begin{alphabetize}
        \item The Pincherle derivative is $\C\bbra{\x}$-linear.
    \end{alphabetize}
    Let $T$ be a shift-invariant operator,
    \begin{alphabetize}[resume]
        \item $T'$ is shift-invariant.
        \item The indicator and the derivative commute, that is, $\widetilde{T'} = (\tilde T)'$.
    \end{alphabetize}
\end{proposition}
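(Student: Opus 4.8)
The plan is to treat the Pincherle derivative as the commutator $T' = T\x - \x T = [T,\x]$ and reduce everything to commutator bookkeeping built on the single relation $D\x = \x D + 1$ (immediate from $D(xf) = f + x\,Df$) together with the fact that $\x$ commutes with every polynomial in $\x$. For (a), $\K$-linearity of the map $T \mapsto T'$ is immediate from the definition; the substance of ``linear over polynomials of $\x$'' is that a factor $p(\x)$ may be pulled through the derivative, which follows at once from commutativity:
\[
(p(\x)T)' = p(\x)T\x - \x p(\x)T = p(\x)T\x - p(\x)\x T = p(\x)(T\x - \x T) = p(\x)T',
\]
and symmetrically $(Tp(\x))' = T'p(\x)$. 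In particular $\x' = \x\x - \x\x = 0$, so every polynomial in $\x$ is annihilated.

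For (b) I will invoke \Cref{p:shiftD}, which identifies shift-invariance with commuting with $D$; it therefore suffices to show $DT' = T'D$ whenever $DT = TD$. This is a one-line computation using $D\x = \x D + 1$ to move $D$ past $\x$ and $DT = TD$ to move $D$ past $T$:
\[
DT' = DT\x - D\x T = TD\x - (\x D + 1)T = T(\x D + 1) - \x TD - T = T\x D - \x TD = T'D,
\]
so $T'$ commutes with $D$ and is hence shift-invariant.

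For (c) I would expand $T$ through its indicator: by the expansion and isomorphism theorems (\Cref{t:expansion,t:iso}) a shift-invariant operator is $T = \tilde T(D) = \sum_{k\geq 0} c_k D^k$. The crux is the identity $(D^k)' = kD^{k-1}$, which I will establish by induction from $D\x = \x D + 1$ (base case $D' = D\x - \x D = 1$; inductive step $D^{k+1}\x = D(\x D^k + kD^{k-1}) = \x D^{k+1} + (k+1)D^k$). Applying the $\K$-linearity from (a) termwise then yields $T' = \sum_k k c_k D^{k-1}$, whose indicator is $\sum_k k c_k t^{k-1} = (\tilde T)'(t)$, i.e. $\widetilde{T'} = (\tilde T)'$. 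The one point that genuinely needs care — the main obstacle — is justifying the termwise linearity across the infinite series: on each fixed polynomial only finitely many $D^k$ act nontrivially, so $T' = \sum_k k c_k D^{k-1}$ holds as an identity of operators on $\P$; part (b) guarantees that $T'$ really has an indicator, and the isomorphism theorem makes the identification on the powers $D^k$ conclusive. Everything else is routine commutator algebra.
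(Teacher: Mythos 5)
Your proof is correct, and for parts (a) and (c) it is essentially the paper's own argument: the paper's proof is the terse remark that (a) is obvious, that the product rule gives $D' = D\x - \x D = 1$, that induction yields $(D^k)' = kD^{k-1}$, and that (b) and (c) then "follow" — meaning exactly the termwise differentiation of the expansion $T = \sum_k c_k D^k$ that you carry out, including the finiteness observation you make explicit (which the paper leaves tacit). Where you genuinely diverge is (b): the paper reads shift-invariance of $T'$ off the expansion, since $\sum_k k c_k D^{k-1}$ is again a power series in $D$, whereas you prove it beforehand by a direct commutator computation, showing $[T',D]=0$ from $[T,D]=0$ and $[D,\x]=1$ and then invoking \Cref{p:shiftD}. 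Your route is slightly more self-contained at that step — it needs no expansion of $T$ at all, only the Heisenberg relation (in effect the Jacobi identity, since $[\x,D]$ is central) — and it has the side benefit of settling, before the expansion is written down, that $T'$ is shift-invariant and hence legitimately has an indicator $\widetilde{T'}$ in the sense of \Cref{d:indic}; the paper's route buys economy, getting (b) and (c) simultaneously from the single computation $T' = \sum_k k c_k D^{k-1}$. Both are sound, and your ordering ((b) before (c)) arguably presents the logic more cleanly than the original.
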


\Cref{p:deriv} is equivalent $f(D)' = f'(D)$, for a power series $f$, which shows that the Pincherle derivative behaves exactly as the usual derivative for shift-invariant operators, but with $D$ acting as a "variable". In particular, we inherit the operational analogs of the product rule and the chain rule.

\begin{proof}
    (a) is obvious. The product rule implies that $D' = D\x - \x D = 1$. With this in mind, we can easily show by induction that $(D^k)' = k D^{k-1}$, from which (b) and (c) follow.
\end{proof}

\begin{theorem}[Closed forms {\cite[Thm.~4]{rota1973}}]\label{t:closedforms}
    If $\{p_n(x)\}_{n\in\N}$ is the basic set for $Q$ then for $n\geq 0$
    \begin{tabular}{ll}
        Transfer formula: & $p_n(x) = Q' (D/Q)^{n+1} x^n$. \\
        Steffensen formula : & $p_n(x) = \x (D/Q)^n x^{n-1}$. \\
        Recurrence formula : & $p_{n+1}(x) = \x (Q')^{-1} p_n(x)$.
    \end{tabular}
\end{theorem}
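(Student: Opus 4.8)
The plan is to treat the three formulas separately but to let the transfer formula carry most of the load. Throughout I write $\phi \leadsto Q$ for the umbral operator, so that $p_n(x) = \phi x^n$ and $Q\phi = \phi D$ by \Cref{p:ShefferDelta}. The crucial simplification is that, by \Cref{p:inverses}(b), $Q = DA$ for an invertible shift-invariant Appell operator $A$, whence \Cref{d:division} gives $D/Q = A^{-1}$; in particular $D/Q$ is shift-invariant and
\[
Q\,(D/Q)^{m} = DA\,A^{-m} = D\,(D/Q)^{m-1}.
\]
Since $Q$, $Q'$ and $D/Q$ are all shift-invariant, they commute with one another and with $D$ (\Cref{c:commute,p:shiftD}), and $Q'$ is invertible because $Q'1 = Qx = c \neq 0$ (\Cref{p:inverses}(a)). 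These are the only algebraic facts I will need.

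First, the transfer formula. I would set $r_n \defeq Q'(D/Q)^{n+1}x^n$ and verify that $\{r_n\}_{n\in\N}$ satisfies the three defining conditions of the basic set of $Q$; uniqueness (\Cref{p:uniq}) then forces $r_n = p_n$. The Sheffer relation is immediate from the displayed identity: $Q r_n = Q'\,Q(D/Q)^{n+1}x^n = Q' D(D/Q)^n x^n = Q'(D/Q)^n Dx^n = n\,r_{n-1}$, using that $D$ commutes with $(D/Q)^n$. The normalization $r_0 = Q'(D/Q)1 = Q'\bra{\inv c} = 1$ is a one-line check. The only real content is $r_n(0) = 0$ for $n \geq 1$. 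Here I would use that for shift-invariant $S$ one has $\E S x^n = n!\,[t^n]\tilde S(t)$ (write $S$ as a series in $D$ via \Cref{t:iso}), so that, since $\widetilde{Q'} = \tilde Q'$ (\Cref{p:deriv}) and $\widetilde{D/Q} = t/\tilde Q(t)$,
\[
r_n(0) = n!\,[t^n]\bra{\tilde Q'(t)\,\frac{t^{n+1}}{\tilde Q(t)^{n+1}}} = n!\,[t^{-1}]\frac{\tilde Q'(t)}{\tilde Q(t)^{n+1}}.
\]
The bracketed residue vanishes because $\tilde Q'/\tilde Q^{n+1} = -\tfrac1n\,(\tilde Q^{-n})'$ is a derivative, and the residue of a derivative is always zero. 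This Lagrange-inversion-flavoured coefficient extraction is the main obstacle, and essentially the crux of the theorem.

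With the transfer formula in hand, the Steffensen formula $s_n \defeq \x(D/Q)^n x^{n-1}$ (for $n\geq1$) follows by induction. Writing $Q\x = \x Q + Q'$ and using $Q(D/Q)^n = D(D/Q)^{n-1}$ gives
\[
Q s_n = \x D(D/Q)^{n-1}x^{n-1} + Q'(D/Q)^n x^{n-1} = (n-1)s_{n-1} + p_{n-1},
\]
where the last summand is recognized as $p_{n-1}$ by the transfer formula just proved. If $s_{n-1} = p_{n-1}$ (the inductive hypothesis, with base case $s_1 = \x A^{-1}1 = x/c = p_1$ from \Cref{p:unit}), this reads $Q s_n = n p_{n-1} = Q p_n$, so $Q(s_n - p_n) = 0$; by \Cref{c:Qp0} the difference is constant, and since both $s_n$ and $p_n$ vanish at $0$ (the factor $\x$ kills the constant term), $s_n = p_n$.

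Finally, the recurrence formula $p_{n+1} = \x(Q')^{-1}p_n$ I would prove independently, by identifying the raising operator $\theta \defeq \phi\,\x\,\phi^{-1}$, which satisfies $\theta p_n = \phi\,\x\,x^n = p_{n+1}$. Conjugating the commutator $[D,\x] = D' = 1$ by $\phi$ and using $Q\phi = \phi D$ gives $[Q,\theta] = \phi[D,\x]\phi^{-1} = 1$; on the other hand, since $(Q')^{-1}$ commutes with $Q$, $[Q,\x(Q')^{-1}] = (Q\x - \x Q)(Q')^{-1} = Q'(Q')^{-1} = 1$ as well. Hence $R \defeq \theta - \x(Q')^{-1}$ commutes with $Q$. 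A short induction finishes the job: $Q(Rp_n) = R(Qp_n) = n\,Rp_{n-1}$, so if $Rp_{n-1} = 0$ then $Rp_n$ is constant by \Cref{c:Qp0}; as $Rp_n = p_{n+1} - \x(Q')^{-1}p_n$ vanishes at $0$, it must be $0$. The base case $Rp_0 = p_1 - x/c = 0$ holds by \Cref{p:unit}. Thus $R = 0$ on the basis $\{p_n\}_{n\in\N}$, that is, $\x(Q')^{-1}p_n = \theta p_n = p_{n+1}$.
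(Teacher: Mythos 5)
Your proof is correct, but it is organized quite differently from the paper's. The paper's key move is to prove \emph{first}, by a short Pincherle-derivative computation ($Q'P^{n+1} = P^n - P'P^{n-1}D$ with $P = D/Q$), that the transfer and Steffensen expressions are literally the same polynomial; the condition $q_n(0)=0$ then comes for free from the factor $\x$ in the Steffensen form, so the paper never needs any coefficient extraction --- it just checks the Sheffer relation once and invokes uniqueness, and then gets the recurrence in two lines by inverting the transfer formula and applying $\x P^{n+1}$. You instead attack the transfer formula head-on, which forces you to prove $r_n(0)=0$ by the residue argument $r_n(0) = n!\,[t^{-1}]\bigl(\tilde Q'/\tilde Q^{n+1}\bigr) = 0$ (residue of a derivative vanishes); this is valid, and it is interesting that it imports a Lagrange-inversion-flavoured fact at exactly the point the paper avoids one --- note the paper later \emph{derives} Lagrange inversion (\Cref{t:lagrange}) from the Steffensen formula, so your argument shows the implication can run in the other direction without circularity, since the residue lemma is elementary and independent. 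Your remaining two arguments are also sound but heavier than needed: the induction for Steffensen (via $Q\x = \x Q + Q'$ and \Cref{c:Qp0}) and the separate commutator proof of the recurrence (showing $[Q,\theta]=1=[Q,\x(Q')^{-1}]$ for the raising operator $\theta = \phi\x\phi^{-1}$, then killing $\theta - \x(Q')^{-1}$ on the basis) are each nice self-contained observations --- the raising-operator identity in particular is conceptually appealing and essentially re-proves \Cref{c:phix} from scratch --- but the paper obtains all three formulas from essentially one uniqueness appeal plus operator algebra, whereas you run three independent arguments.
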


The first pair of equations is traditionally referred to as the "transfer formulas". To avoid ambiguity, given that both will be employed in the following, the latter shall be designated after Steffensen, in recognition of his contributions to the theorem as a whole \cite{steffensen1941}.

\begin{proof}
    Let us define $A := D/Q$, which is well-defined by \Cref{p:inverses} (b), and $q_n(x) := Q' A^{n+1} x^n$.
    First, we establish the equivalence between the transfer formula and the Steffensen formula using the properties of the Pincherle derivative (see \Cref{p:deriv}). For $n \geq 0$ :
    \[
    Q' A^{n+1} = (DA^{-1})' A^{n+1} = (D' A^{-1} - D A'A^{-2})  A^{n+1} = A^n - A' A^{n-1} D,
    \]
    which we apply to the monomials
    \begin{align*}
        q_n(x) &= Q' A^{n+1} x^n = A^n x^n - A' A^{n-1} n x^{n-1} \\
        &= A^n x^n - (A^n)' x^{n-1} = A^n x^n - (A^n \x - \x A^n) x^{n-1} \\
        &=  \x A^n x^{n-1}.
    \end{align*}
    With this formula, we have $q_0(x) = 1$ and for $n > 0$, $q_n(0) = 0$. We want to show that $\{q_n(x)\}_{n\in\N}$ is a Sheffer sequence for $Q = D A^{-1}$ to show its basicity. By the uniqueness of the basic sequence, we would consequently have $\{q_n(x)\}_{n\in\N} =\{p_n(x)\}_{n\in\N}$. To this end, we reuse the transfer formula
    \[
    Q q_n(x) = D A^{-1} Q' A^{n+1} x^n = Q' A^n D x^n = n Q' A^n x^{n-1} = n q_{n-1}(x).
    \]
    Therefore, the first two formulas hold.
    
    For the recurrence formula, we first invert the first transfer formula. $Q'$ is invertible because $A^{-1}$ is $x^n = A^{-n-1} (Q')^{-1} p_n(x)$. Now we apply $\x A^{n+1}$ on both sides to get the Steffensen formula back at index $n+1$ on the left hand-side
    \[
    p_{n+1}(x) = \x A^{n+1} A^{-n-1} (Q')^{-1} p_n(x) = \x (Q')^{-1} p_n(x).
    \]
\end{proof}

\begin{example}\label{x:phi=falling}
    To derive the basic set $\{\varphi_n(x)\}_{n\in\N}$ for $\Delta$, we use the recurrence relation. Given $\pa{\Delta'}^{-1} = e^{-D} = E^{-1}$, we compute
    \begin{align*}
        \varphi_n(x) &= \x E^{-1} \varphi_{n-1}(x) = x \varphi_{n-1}(x-1) = x (x-1) \varphi_{n-2}(x-2) \\
        &= \ldots = x (x-1) \cdots (x-n+1) \cdot \varphi_0(x-n), 
    \end{align*}
    $\{\varphi_n(x)\}_{n\in\N}$ is basic, so $\varphi_0(x-n) = 1$, hence $\varphi_n(x) = (x)_n$ the falling factorials, as expected. Similarly, for $Q = D/\lambda$, using either of the formulas, we obtain $\str\lambda x^n = \lambda^n x^n$.
\end{example}

The following commutation identities emerge from the recurrence formula.

\begin{corollary}[{\cite[Prop.~7.3]{rota1973}}]\label{c:phix}
    If $\phi$ is an umbral operator for $Q$, then
    \[
    \phi \x = \x (Q')^{-1} \phi = \x \phi (Q^{[-1]})'.
    \]
\end{corollary}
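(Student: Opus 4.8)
The plan is to read off the first equality directly from the recurrence formula of \Cref{t:closedforms}, and then to obtain the second equality by feeding the first equality back into itself through the inverse operator $\phi^{-1}$.

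For the first equality, I would start from the recurrence formula $\phi_{n+1}(x) = \x (Q')^{-1}\phi_n(x)$, which holds for the basic set $\{\phi_n\}_{n\in\N}$ of $Q$. Since $\phi_n(x) = \phi x^n$ and $\phi_{n+1}(x) = \phi x^{n+1} = \phi\x x^n$, this reads
\[
\phi\x x^n = \x(Q')^{-1}\phi x^n \quad\text{for all } n\geq 0.
\]
Thus the two operators $\phi\x$ and $\x(Q')^{-1}\phi$ agree on every monomial, and since $\{x^n\}_{n\in\N}$ is a polynomial sequence, \Cref{p:span} promotes this to the operator identity $\phi\x = \x(Q')^{-1}\phi$. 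Here $Q'$ is invertible, exactly as already observed in the proof of \Cref{t:closedforms}.

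For the second equality, I would exploit the fact that $\phi^{-1}$ is itself umbral: by \Cref{t:Shefferclosed}(d), $\phi^{-1}\leadsto Q^{[-1]}$. Applying the \emph{already established} first equality to $\phi^{-1}$ and its delta operator $Q^{[-1]}$ gives
\[
\phi^{-1}\x = \x\bra{(Q^{[-1]})'}^{-1}\phi^{-1}.
\]
What remains is a purely algebraic rearrangement: left-multiplying by $\phi$ yields $\x = \phi\x\bra{(Q^{[-1]})'}^{-1}\phi^{-1}$, and then right-multiplying successively by $\phi$ and by $(Q^{[-1]})'$ isolates $\phi\x = \x\phi(Q^{[-1]})'$. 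Combined with the first equality, both $\x(Q')^{-1}\phi$ and $\x\phi(Q^{[-1]})'$ equal $\phi\x$, which is precisely the claimed chain of identities.

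The main obstacle is conceptual rather than computational: one must recognize that the corollary can be bootstrapped from itself via the inverse operator, which relies on knowing both that $\phi^{-1}$ is umbral for $Q^{[-1]}$ and that $(Q^{[-1]})'$ is invertible. A more self-contained alternative for the second equality would be a direct chain-rule computation. Conjugating $Q' = \tilde Q'(D)$ by $\phi$ and using $\phi^{-1}D\phi = Q^{[-1]}$ (which follows from $\phi^{-1}\leadsto Q^{[-1]}$ via \Cref{p:ShefferDelta}) gives $\phi^{-1}Q'\phi = \tilde Q'(Q^{[-1]})$, while the power-series identity $(\tilde Q^{-1})' = 1/(\tilde Q'\circ\tilde Q^{-1})$ together with \Cref{p:deriv}(c) identifies $(Q^{[-1]})' = \bra{\tilde Q'(Q^{[-1]})}^{-1}$; comparing the two yields $\phi^{-1}(Q')^{-1}\phi = (Q^{[-1]})'$, hence $(Q')^{-1}\phi = \phi(Q^{[-1]})'$. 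I expect the first route to be the shorter one, so I would present it and keep the chain-rule computation only as a confirming remark.
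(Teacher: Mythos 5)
Your proposal follows the paper's proof exactly: the first equality is read directly off the recurrence formula of \Cref{t:closedforms}, and the second is obtained, just as in the paper, by specializing that equality to $\phi^{-1}\leadsto Q^{[-1]}$ (via \Cref{t:Shefferclosed}) and rearranging algebraically. Your extra details (the explicit appeal to \Cref{p:span} and the chain-rule computation kept as a confirming remark) are correct but do not change the route.
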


\begin{proof}
    The first equality is immediate thanks to the recurrence relation of \Cref{t:closedforms}
    \[
    \phi \x x^n = \phi_{n+1}(x) = \x (Q')^{-1} \phi_n(x) = \x (Q')^{-1} \phi x^n. \tag{$*$}
    \]
    According to \Cref{t:Shefferclosed}, $\phi^{-1} \leadsto Q^{[-1]}$, therefore, specifying ($*$) to $\phi^{-1}$, we obtain $\phi^{-1} \x = \x ((Q^{[-1]})')^{-1} \phi^{-1}$, which is equivalent to $\x \phi (Q^{[-1]})' = \phi \x$.
\end{proof}

\begin{example}\label{x:phix^n}
    As easily as in \Cref{x:phi=falling}, we can derive the following operator identity with the first equality of \Cref{c:phix} $\varphi \x^n = (\x)_n E^{-n} \varphi$.
\end{example}

In addition, the first equality enables a characterization of basic sets through differential equations.

\begin{corollary}[Differential equation]\label{c:DE}
    If $\phi$ is an umbral operator for $Q$, then $\phi_n(x)$ satisfies the differential equation
    \[
    \pa{n - \x \frac{Q}{Q'}}\phi_n(x) = 0.
    \]
\end{corollary}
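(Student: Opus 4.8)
The plan is to start from the recurrence formula of \Cref{t:closedforms}, or equivalently from the first commutation identity of \Cref{c:phix}, namely $\phi\x = \x(Q')^{-1}\phi$. Applying both sides to $x^n$ gives
\[
\phi_{n+1}(x) = \x(Q')^{-1}\phi_n(x),
\]
so the natural idea is to rewrite this as a relation involving $\phi_n$ alone. The obstacle is that the raising operator $\x(Q')^{-1}$ increments the index, whereas the claimed differential equation is a single-index identity; I must therefore reintroduce the index $n$ operationally rather than by stepping down from $n+1$.

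The key device is the Sheffer relation $Q\phi_n(x) = n\phi_{n-1}(x)$, which lets me recover the factor $n$. First I would combine the raising and lowering relations: from $\phi_{n}(x) = \x(Q')^{-1}\phi_{n-1}(x)$ I can apply $Q$ and use $Q\phi_{n}(x)=n\phi_{n-1}(x)$ to express $n\phi_{n-1}(x)$ in terms of $\phi_{n-1}(x)$. More directly, I would observe that $Q$ acts on $\phi_n$ as a lowering operator and $\x(Q')^{-1}$ as a raising operator, and that their composition (raise then lower, or the appropriate ordering) acts as multiplication by the index. Concretely, applying $Q$ to the recurrence gives
\[
Q\phi_{n+1}(x) = Q\x(Q')^{-1}\phi_n(x) = (n+1)\phi_n(x),
\]
and I would manipulate $Q\x(Q')^{-1}$ using the Pincherle derivative identity $Q\x - \x Q = Q'$, i.e.\ $Q\x = \x Q + Q'$, to extract a multiplication-by-index operator.

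Carrying this out, $Q\x(Q')^{-1} = (\x Q + Q')(Q')^{-1} = \x Q(Q')^{-1} + 1$, so the previous display becomes
\[
\bra{\x Q(Q')^{-1} + 1}\phi_n(x) = (n+1)\phi_n(x),
\]
whence $\x Q(Q')^{-1}\phi_n(x) = n\phi_n(x)$. This is already the desired equation after substituting the Sheffer relation to justify replacing $Q(Q')^{-1}\phi_n$ appropriately; rearranging yields
\[
\bra{n - \x\frac{Q}{Q'}}\phi_n(x) = 0,
\]
where $Q/Q' = Q(Q')^{-1}$ since $Q$ and $Q'$ are shift-invariant and hence commute by \Cref{c:commute}, making the operator-division notation of \Cref{d:division} unambiguous. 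I expect the only genuinely delicate point to be confirming that $Q'$ is invertible and that $\frac{Q}{Q'}$ is the correct reading of the commuting quotient; both follow from \Cref{p:deriv} together with the fact that $Q'$ is a nonzero-constant-valued shift-invariant operator (since $Qx = c$ forces $\E Q' = c \neq 0$, so $Q'1\neq 0$ and \Cref{p:inverses}(a) applies). Everything else is the short Pincherle-derivative computation above.
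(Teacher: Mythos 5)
Your proof is correct, and it reaches the paper's key identity $n\phi_n(x) = \x\frac{Q}{Q'}\phi_n(x)$ by a slightly different mechanism than the paper does. The paper's proof is a one-line chain at fixed index $n$: it realizes the index through the Cauchy--Euler identity $n x^n = \x D x^n$, writes $n\phi_n(x) = \phi\x D x^n$, pushes $\phi$ past $\x$ with \Cref{c:phix}, and converts $\phi D$ into $Q\phi$ with \Cref{p:ShefferDelta}, landing directly on $\x(Q')^{-1}Q\phi_n(x)$. You instead climb to index $n+1$ with the raising operator $\x(Q')^{-1}$ and descend with $Q$, which obliges you to expand $Q\x(Q')^{-1} = \x Q(Q')^{-1} + 1$ via the Pincherle commutator $Q\x = \x Q + Q'$; the summand $1$ produced by the commutator is precisely what turns $n+1$ into $n$. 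Both arguments consume the same two essential ingredients, \Cref{c:phix} and the Sheffer relation, so the difference is one of bookkeeping: yours makes the raise-then-lower ladder structure explicit at the cost of one extra commutation, while the paper's $\x D$ trick avoids the commutator altogether and is a line shorter. Your side points are also sound: $Q'$ is invertible because $Q'1 = Q\x 1 - \x Q 1 = Qx = c \neq 0$ (using \Cref{p:Qa=0}, \Cref{p:deriv} and \Cref{p:inverses} (a)), and \Cref{c:commute} makes $Q/Q' = Q(Q')^{-1} = (Q')^{-1}Q$ unambiguous under \Cref{d:division}. Two cosmetic remarks: the relevant constant is $Q'1 = c$ rather than $\E Q'$ as you wrote, and your clause about substituting the Sheffer relation after obtaining $\x Q(Q')^{-1}\phi_n(x) = n\phi_n(x)$ is superfluous --- the Sheffer relation was already spent in the step $Q\phi_{n+1}(x) = (n+1)\phi_n(x)$, and at that point the proof is complete.
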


\begin{proof} It follows from using \Cref{c:phix} and \Cref{p:ShefferDelta}
    \[
    n \phi_n(x) = \phi \x D x^n = \x (Q')^{-1} \phi D x^n = \x (Q')^{-1} Q \phi_n(x) = \x \frac{Q}{Q'} \phi_n(x).
    \]
\end{proof}

\subsection{Extension of the results to Sheffer sets}

Following the numerous theorems concerning basic sets, it is natural to extend these to the closely related Sheffer sets. This will be supplemented by the introduction of \emph{cross sequences}. The relation between umbral and Sheffer sets is described by the following proposition.

\begin{proposition}[{\cite[Prop.~5.1]{rota1973}}]\label{p:p to s}
    Let $\phi\leadsto Q$. Then $\{s_n(x)\}_{n\in\N}$ is a Sheffer set for $Q$ if and only if there exists an Appell operator $A$ such that $s = A\phi$.
\end{proposition}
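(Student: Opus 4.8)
The plan is to convert everything into the operator identities already established, where the statement becomes almost mechanical. Recall from \Cref{p:ShefferDelta} that $s$ is a Sheffer operator for $Q$ exactly when $Qs = sD$, and that $p$ being basic for $Q$ gives the same intertwining $Qp = pD$; since $p$ is invertible this rearranges into the two facts I will lean on repeatedly, namely $Q = pDp^{-1}$ and $Dp^{-1} = p^{-1}Q$. On the other side, by \Cref{c:SI Appell} an operator is Appell precisely when it is shift-invariant and invertible, and by \Cref{p:shiftD} shift-invariance is the same as commuting with $D$. The whole proposition is therefore about the single operator $A \defeq sp^{-1}$ (automatically invertible as a product of invertibles), and the claim reduces to showing that $A$ is shift-invariant if and only if $s=Ap$ is Sheffer.

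For the easy direction, suppose $A$ is Appell and put $s = Ap$. Being shift-invariant, $A$ commutes with the delta operator $Q$ by \Cref{c:commute}, so
\[
Qs = QAp = AQp = ApD = sD,
\]
and $s$ is invertible; hence $s$ is a Sheffer operator for $Q$.

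For the converse I would start from $Qs = sD$ and show $A = sp^{-1}$ commutes with $Q$, which is a one-line computation: $QA = Qsp^{-1} = sDp^{-1} = sp^{-1}Q = AQ$, using $Qs=sD$ and then $Dp^{-1}=p^{-1}Q$. The remaining task is to upgrade ``commutes with $Q$'' to ``shift-invariant''. This is exactly where I expect the main obstacle: trying to verify $AD = DA$ head-on stalls, because the available intertwining relations move $Q$ across $s$ and $p$ but provide no clean rewrite for $p^{-1}D$ or $Ds$. The fix is to transport the commutation through the umbral operator: since $Q = pDp^{-1}$, the operator $p^{-1}Ap$ commutes with $D$, hence is shift-invariant by \Cref{p:shiftD}, say $p^{-1}Ap = \tilde B(D)$ in the notation of \Cref{d:indic}. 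Conjugating back gives $A = p\,\tilde B(D)\,p^{-1} = \tilde B(Q)$, a power series in $Q$, which is shift-invariant by the expansion theorem (\Cref{t:expansion}) and invertible because $A$ is; thus $A$ is Appell with $Ap = s$. (One could instead observe directly that $p^{-1}s$ commutes with $D$ and is therefore Appell, but that yields the right-sided factorization $s = p\cdot(p^{-1}s)$, so the conjugation step is needed in any case to land on the left-sided form $Ap=s$ demanded by the statement.)
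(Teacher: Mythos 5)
Your proof is correct, and one of its two halves is the paper's own: for the direction ``$A$ Appell $\Rightarrow$ $s$ Sheffer'' the paper makes exactly your computation $Qs = QAp = AQp = ApD = sD$, justified by \Cref{c:commute} and \Cref{p:ShefferDelta}. Where you genuinely diverge is the other direction. The paper dispatches it in one line using machinery you avoid: by \Cref{t:Shefferclosed} (c), $p^{-1}$ is an umbral (hence Sheffer) operator for $Q^{[-1]}$, so by \Cref{t:Shefferclosed} (a) the product $A = s p^{-1}$ is a Sheffer operator for $Q^{[-1]} \diamond Q = D$, which is precisely the definition of an Appell operator. You instead show $AQ = QA$ by hand and then upgrade commutation with $Q$ to shift-invariance by conjugating through $p$, writing $p^{-1}Ap = \tilde B(D)$ and transporting back to $A = \tilde B(Q)$. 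Both routes work; the paper's buys brevity by reusing \Cref{t:Shefferclosed}, whose proof contains essentially the same intertwining computations you redo, while yours is more self-contained, needing only \Cref{p:ShefferDelta}, \Cref{p:shiftD}, \Cref{c:commute} and the power-series structure of shift-invariant operators. Two minor points. First, the shift-invariance of $\tilde B(Q)$ is really the isomorphism theorem (\Cref{t:iso}) --- or simply the observation that powers of $Q$ commute with $E^a$ --- rather than the expansion theorem (\Cref{t:expansion}), which goes in the opposite direction (shift-invariant $\Rightarrow$ expandable in $Q$). Second, your conjugation detour can be compressed: once $AQ = QA$, the operator $A$ commutes with every power of $Q$, hence with $D = \tilde Q^{-1}(Q)$ (\Cref{p:inverses} (c); the sums are locally finite on polynomials), and \Cref{p:shiftD} concludes immediately, so the indicator $\tilde B$ never needs to be introduced.
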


\begin{proof}
\hfill
\begin{itemize}
    \item[( $\imp$ )] Let $A = s \phi^{-1}$. By \Cref{t:Shefferclosed} (a) and (c), $A$ is a Sheffer operator for $Q^{[-1]} \diamond Q = D$. Thus, by definition, $A$ is an Appell operator, and it verifies $s = A\phi$.
    \item[( $\isimp$ )] Since $s = A \phi$ with $A$ shift-invariant, we use \Cref{p:ShefferDelta} twice : $Q s = A Q \phi = A \phi D = s D$, demonstrating that $s$ is a Sheffer operator for $Q$.
\end{itemize}
\end{proof}

\begin{theorem}\label{t:binomt}
    Let $\{s_n(x)\}_{n\in\N}, \{p_n(x)\}_{n\in\N}$ be two polynomial sequences, then these statements are equivalent
    \begin{alphabetize}
        \item $\{s_n(x)\}_{n\in\N}$ is a Sheffer set with basic set $\{p_n(x)\}_{n\in\N}$.
        \item For two variables $x, y$, $s_{x+y} = s_x p_y$ in $\C[x+y]$.
        \item For all $x, y$ and $n\in\N$,
        \[
        s_n(x+y) = \sum_{k=0}^n \binom{n}{k} s_k(x) p_{n-k}(y).
        \]
    \end{alphabetize}
\end{theorem}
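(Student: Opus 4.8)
The plan is to establish $(a)\Leftrightarrow(b)$ and $(b)\Leftrightarrow(c)$, working entirely within the bivariate operator calculus already set up, and exploiting that operators acting on distinct variables commute.

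I would dispose of $(b)\Leftrightarrow(c)$ first, since it is the pure bivariate-to-coefficient translation, identical in spirit to the proof of \Cref{p:px+y}: applying both members of $s_{x+y}=s_x p_y$ to $(x+y)^n$ and expanding by the binomial theorem turns the left side into $s_n(x+y)$ and the right side into $\sum_{k=0}^n \binom{n}{k} s_k(x) p_{n-k}(y)$, so the two statements are interchangeable.

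For $(a)\Rightarrow(b)$: since $p\leadsto Q$ and $s$ is Sheffer for $Q$, \Cref{p:p to s} furnishes an Appell operator $A$ with $s=Ap$. Being Appell, $A$ is shift-invariant (\Cref{c:SI Appell}), so $A_{x+y}=A_x$ (\Cref{p:ux=ux+y}); being basic, $p$ is of binomial type (\Cref{t:deltabinom}), so $p_{x+y}=p_x p_y$ (\Cref{p:px+y}). Composing these, $s_{x+y}=A_{x+y}p_{x+y}=A_x p_x p_y = s_x p_y$, which is $(b)$.

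For $(b)\Rightarrow(a)$ --- the step I expect to be the crux --- the obstacle is to recover that $p$ \emph{alone} is of binomial type from the mixed identity $(b)$. I would do this with a three-variable associativity argument: computing $s_{x+y+z}$ by grouping $(x+y)+z$ gives $s_{x+y}p_z=s_x p_y p_z$, whereas grouping $x+(y+z)$ gives $s_x p_{y+z}$; cancelling the invertible $s_x$ (which acts only on $x$, hence commutes with everything on $y,z$) yields $p_{y+z}=p_y p_z$, so $p$ is of binomial type and therefore basic for some delta operator $Q$ (\Cref{t:deltabinom}). With $p_{x+y}=p_x p_y$ now available, set $A:=sp^{-1}$, which is invertible because $s,p\in\hL$; then $A_{x+y}=s_{x+y}p_{x+y}^{-1}=s_x p_y (p_x p_y)^{-1}=s_x p_x^{-1}=A_x$, so $A$ is shift-invariant by \Cref{p:ux=ux+y}. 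Thus $A$ is an Appell operator with $s=Ap$, and \Cref{p:p to s} concludes that $s$ is a Sheffer set with basic set $\{p_n\}_{n\in\N}$. All remaining manipulations are direct and justified by the commuting bivariate conventions fixed earlier.
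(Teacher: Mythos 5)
Your proposal is correct and follows essentially the same route as the paper's own proof: the same $(b)\Leftrightarrow(c)$ translation via $(x+y)^n$, the same use of \Cref{p:p to s}, \Cref{c:SI Appell}, \Cref{p:ux=ux+y} and \Cref{p:px+y} for $(a)\Rightarrow(b)$, and the same three-variable associativity trick to extract $p_{y+z}=p_y p_z$ and prove $sp^{-1}$ shift-invariant for $(b)\Rightarrow(a)$. The only cosmetic difference is that you invert the bivariate identity $p_{x+y}=p_xp_y$ directly (using commutation of operators in distinct variables), where the paper cites \Cref{t:Shefferclosed} for the basicness of $p^{-1}$ --- these are the same computation.
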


The equivalence between (a) and (c) was established in \cite[Prop.~5.6]{rota1973}.

\begin{proof}
    \hfill
    \begin{itemize}
        \item[(b $\eq$ c)] The proof is analogous to that of \Cref{p:px+y}.
        \item[(a $\imp$ b)] By \Cref{p:p to s}, there exists an Appell operator $A$ such that $Ap = s$. According to \Cref{p:px+y}, $p_{x+y} = p_x p_y$. Since $A$ is shift-invariant (by \Cref{c:SI Appell}) and using \Cref{p:ux=ux+y}, we have $s_{x+y} = A_{x+y}p_{x+y} = A_x p_x p_y = s_x p_y$.
        \item[(b $\imp$ a)] The key is to use the associativity of the addition: on the one hand, $s_{x+y+z} = s_x p_{y+z}$, and on the other hand, $s_{x+y+z} = s_{x+y} p_z = s_x p_y p_z$. Therefore, $p_{y+z} = p_y p_z$ (because $s$ is invertible). By \Cref{p:px+y}, $p$ is basic, and so is $p^{-1}$ (by \Cref{t:Shefferclosed}). With the computation
        \[
        (s p^{-1})_{x+y} = s_{x+y} p^{-1}_{x+y} = s_x p_y p^{-1}_x p^{-1}_y = s_x p^{-1}_x = (s p^{-1})_x,
        \]
        we deduce, thanks to \Cref{p:ux=ux+y}, that $s p^{-1}$ is shift-invariant. Hence, by \Cref{p:p to s}, we conclude that $s$ has the same delta operator as the umbral operator $p$, which is equivalent to (a).
    \end{itemize}
\end{proof}

\begin{definition}
    A cross operator $\phi^{(u)}$ for an umbral operator $\phi$ is a Sheffer operator (see \Cref{p:p to s}) defined to be of the form $\phi^{(u)} = C^u \phi$, for $u\in\C$, for some Appell operator $C$. We call $\{\phi^{(u)}_n(x)\}_{n\in\N}$ a cross-sequence.
\end{definition}

\begin{proposition}[{\cite[Thm.~8]{rota1973}}]\label{p:crossbinom}
    Let $\phi$ be an umbral operator for $Q$. If $\phi^{(u)}$ is a cross operator for $\phi$, then it is a Sheffer operator for $Q$. Moreover, $\phi^{(u)}$ is a cross operator if and only if it verifies
    \begin{equation}\label{e:cross}
        \phi^{(u+v)}_n(x+y) = \sum_{k=0}^n \binom{n}{k} \phi^{(u)}_k(x) \phi^{(v)}_{n-k}(y).
    \end{equation}
\end{proposition}

\begin{proof}
    \hfill
    \begin{itemize}
    \item[( $\imp$ )] By \Cref{p:p to s} $\phi^{(v)} = C^v \phi$ is a Sheffer operator for $Q$. Therefore,
    \[
    \phi^{(v)}_n(x+y) = \sum_{k=0}^n \binom{n}{k} \phi_k(x) \phi^{(v)}_{n-k}(y),
    \]
    according to \Cref{t:binomt}. Applying the shift-invariant operator $C^u$, we recover \cref{e:cross}.
    \item[( $\isimp$ )] Applying the identity at $u = v = 0$ suffices to show that $\phi := \phi^{(0)}$ is an umbral operator, thanks to \Cref{t:deltabinom}. Setting $u = 0$, by \Cref{p:p to s}, there exists an Appell operator $A(v)$, such that $A(v) \phi = \phi^{(v)}$. We quickly deduce that $A(0)$ is the identity function.
    \begin{align*}
        A(u)A(v)\phi(x+y)
        &= A(u)\phi^{(v)}_n(x+y) = \sum_{k=0}^n \binom{n}{k} \phi^{(u)}_k(x) \phi^{(v)}_{n-k}(y) \\
        &= \phi_n^{(u+v)}(x+y) = A(u+v) \phi_n(x+y).
    \end{align*}
    Therefore, $A(u)A(v) = A(u+v)$ hence the equality $A(u) = A(1)^u$, which shows that $\phi^{(v)}$ is a cross operator.
    \end{itemize}
\end{proof}

It is possible to further extend the theory of cross-sequence with an analog of Sheffer sets to basic sets referred to as \emph{steffensen1941 sequences} in \cite[p.~714]{rota1973}, but we will not develop this point here.

\section{Study of the coefficients of basic sets}\label{s:coeffs}

We will explore various applications of umbral calculus within the study of formal power series. We shall elucidate the properties of the coefficients of basic sequences. Interestingly, these findings reciprocate their utility back within umbral calculus, notably facilitated by the Isomorphism Theorem and the algebra of operators.

\subsection{Properties of the coefficients}

The coefficients of basic sequence have received little attention in the literature and have never been given proper notation. We will address this gap and demonstrate their many properties. For the sake of readability, we have adopted the \emph{Kamarata-Knuth notation style}. This approach is further justified by the observation that the majority of numbers typically associated with this notation are also coefficients of basic or Sheffer sequences.

\begin{definition}
For $U \in \L(\C[x])$, we define its coefficients by
\[
U x^n := \sum_{k\in\Z} \coeff{n}{k}_U x^k,
\]
in particular for $k < 0$, $\coeff{n}{k}_U = 0$. And, of course, if $U \in \hL$, then for $k > n$ the coefficient is also zero.
\end{definition}

\begin{example}\label{x:Stir}
    \emph{Stirling numbers of the first} $\coeff{n}{k}$ and \emph{the second kind} $\Stir{n}{k}$ (also denoted with the Kamarata-Knuth notation) are bound to $\varphi$ by these relationships
    \begin{equation}
        \coeff{n}{k}_\varphi = \coeff{n}{k} (-1)^{n-k} \quad\et\quad \coeff{n}{k}_{\varphi^{-1}} = \Stir{n}{k}.
    \end{equation}
\end{example}

\begin{example}
    If $A$ is an Appell operator then by \Cref{t:binomt} (c) with $x = 0$, we find that
    \begin{equation}
        \coeff{n}{k}_A = \binom{n}{k} A_{n-k}(0) = \binom{n}{k} \coeff{n-k}{0}_A.
    \end{equation}
\end{example}

\begin{proposition}\label{p:coeffxD}
    If $U \in \L(\C[x])$,
    \begin{align*}
         \coeff{n}{k}_{U \x} &= \coeff{n+1}{k}_U,& \coeff{n}{k}_{U D} &= n\coeff{n-1}{k}_U, \\[13pt]
        \coeff{n}{k}_{\x U} &= \coeff{n}{k-1}_U,&  \coeff{n}{k}_{D U} &= (k+1)\coeff{n}{k+1}_U.
    \end{align*}
\end{proposition}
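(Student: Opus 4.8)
The plan is to compute each of the four identities directly from the definition of operator coefficients, exploiting how each of the operators $\x$, $D$ acts on monomials and how multiplication of linear operators interacts with the coefficient expansion $U x^n = \sum_k \coeff{n}{k}_U x^k$. The key observation is that the two operators on the right ($\x$ and $D$) act on the monomial \emph{before} $U$ does, while the two on the left act \emph{after} $U$. This distinction is what produces the index shifts versus the index substitutions.

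First I would treat the two right-multiplication cases, since there $U$ is applied last. For $U\x$, I compute $U\x\, x^n = U x^{n+1} = \sum_k \coeff{n+1}{k}_U x^k$, and reading off the coefficient of $x^k$ gives $\coeff{n}{k}_{U\x} = \coeff{n+1}{k}_U$ immediately. For $UD$, I use $UD\,x^n = U(n x^{n-1}) = n \sum_k \coeff{n-1}{k}_U x^k$, yielding $\coeff{n}{k}_{UD} = n\coeff{n-1}{k}_U$. Both of these follow from linearity of $U$ and require nothing beyond the definition.

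Next I would handle the two left-multiplication cases, where $U$ acts first and then $\x$ or $D$ reshuffles the resulting expansion. For $\x U$, I write $\x U x^n = \x \sum_j \coeff{n}{j}_U x^j = \sum_j \coeff{n}{j}_U x^{j+1}$, and collecting the coefficient of $x^k$ (i.e.\ setting $j+1=k$) gives $\coeff{n}{k}_{\x U} = \coeff{n}{k-1}_U$. For $DU$, I compute $DU x^n = D\sum_j \coeff{n}{j}_U x^j = \sum_j j\,\coeff{n}{j}_U x^{j-1}$; collecting the coefficient of $x^k$ (setting $j-1=k$, so $j=k+1$) gives $\coeff{n}{k}_{DU} = (k+1)\coeff{n}{k+1}_U$.

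There is no serious obstacle here: the whole proposition is a bookkeeping exercise in reindexing, and the only point demanding mild care is keeping track of which side the extra factor $n$ or $k+1$ comes from and ensuring the index substitutions are consistent with the convention $\coeff{n}{k}_U = 0$ for $k<0$ (so that, e.g., the $DU$ formula correctly produces a zero coefficient when summing against the constant term $j=0$). The hardest part, if any, is purely presentational: laying out all four computations compactly without obscuring the symmetry between the $\x$/$D$ pair and the left/right pair.
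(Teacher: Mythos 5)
Your proof is correct: all four identities follow exactly as you compute them, and your attention to the convention $\coeff{n}{k}_U = 0$ for $k<0$ handles the boundary cases properly. The paper states this proposition without any proof, treating it as immediate from the definition, and your direct verification — distinguishing whether $\x$ and $D$ act on the monomial before $U$ (giving index shifts in $n$) or after $U$ (giving index shifts in $k$) — is precisely the intended argument.
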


The first key property of the coefficients is their linearity, inherited from the operators that define them.

\begin{proposition}\label{p:coefflin}
    For any operator $U, V \in \L(\C[x])$, and $\lambda, \mu \in \C$
    \[
    \coeff{n}{k}_{\lambda U + \mu V} = \lambda \coeff{n}{k}_U + \mu \coeff{n}{k}_V.
    \]
\end{proposition}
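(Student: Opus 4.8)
The plan is to unwind the definition of the coefficients and invoke linearity twice. By definition $\coeff{n}{k}_U$ is the $k$-th Taylor coefficient of the polynomial $U x^n$, i.e. $\coeff{n}{k}_U = [x^k]\, U x^n = \inv{k!}\E D^k U x^n$. Everything therefore reduces to observing that this quantity is linear in its operator argument $U$.

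First I would apply the operator $\lambda U + \mu V$ to the fixed monomial $x^n$. By the very definition of the vector-space structure on $\L(\P)$ — pointwise addition and scalar multiplication of operators — this gives $(\lambda U + \mu V) x^n = \lambda\, U x^n + \mu\, V x^n$. Then I would expand each summand using the definition of its coefficients and regroup term by term,
\[
(\lambda U + \mu V) x^n = \lambda \sum_{k\in\Z} \coeff{n}{k}_U x^k + \mu \sum_{k\in\Z} \coeff{n}{k}_V x^k = \sum_{k\in\Z} \bra{\lambda \coeff{n}{k}_U + \mu \coeff{n}{k}_V} x^k.
\]
Comparing this with the defining expansion $(\lambda U + \mu V) x^n = \sum_{k\in\Z} \coeff{n}{k}_{\lambda U + \mu V} x^k$ and using the uniqueness of the coefficients of a polynomial, I would read off the claimed identity.

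There is no genuine obstacle here: the statement is an immediate consequence of the fact that the coefficient extraction $[x^k]$ is a linear functional (explicitly $\inv{k!}\E D^k$) composed with the evaluation map $U \mapsto U x^n$, which is itself linear. The only minor point to keep in mind is that the manipulation of the formally bi-infinite sum over $k\in\Z$ is harmless, since for $U,V\in\L(\P)$ all but finitely many terms vanish.
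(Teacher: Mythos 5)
Your proof is correct and matches the paper's intent: the paper states this proposition without a written proof, merely noting that the coefficients ``inherit'' linearity from the operators defining them, which is precisely the argument you spell out (apply $\lambda U + \mu V$ to $x^n$, expand, and identify coefficients). Nothing is missing; your remark that the sum over $k\in\Z$ has only finitely many nonzero terms for operators in $\L(\P)$ is a sensible precaution.
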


\begin{example}[Stirling numbers recurrence relation]\label{x:Stirrec}
    Stirling numbers are famously characterized by the recurrence relation
    \begin{equation}\label{e:Stir}
        \coeff{n+1}{k} = n\coeff{n}{k} + \coeff{n}{k-1} \quad \et \quad \Stir{n+1}{k} = k\Stir{n}{k} + \Stir{n}{k-1}.
    \end{equation}
    These recurrences can be proven algebraically, but they also follow as a consequence of the identities on operators we have just established. Indeed, we can apply the second and first equalities of \Cref{c:phix} to $\varphi$ and $\varphi^{-1}$. In both cases, the term $(\Delta^{[-1]})'$ can be simplified to $(1+D)^{-1}$.
    \division{
    Before the last step, we apply $1+D$ on the left
    \begin{align*}
        \varphi \x
        &= \x \varphi (\Delta^{[-1]})' = \x \varphi (1+D)^{-1} \\
        &= -\varphi\x D + \x\varphi,
    \end{align*}
    }{
    By a direct computation
    \begin{align*}
        \varphi^{-1} \x
        &= \x ((\Delta^{[-1]})')^{-1} \varphi^{-1} = \x (1+D) \varphi^{-1} \\
        &= \x \varphi^{-1} + \x D \varphi^{-1},
    \end{align*}
    }
    \noindent which, according to \Cref{p:coefflin,p:coeffxD}, are equivalent to the recurrence relations (\ref{e:Stir}).
\end{example}

Conveniently, the second operation in the ring of operators, i.e., composition, also possesses an expansion.

\begin{proposition}[Composition formula]\label{p:compo}
For $n, k \in \N$ and $\phi, \psi \in \L(\C_n[x])$
\[
\coeff{n}{k}_{\phi\psi} = \sum_{j=k}^n \coeff{j}{k}_\phi \coeff{n}{j}_\psi.
\]
\end{proposition}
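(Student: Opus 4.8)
The plan is to prove the composition formula by directly unwinding the definition of the coefficients through the two operators in sequence. The key observation is that $\coeff{n}{k}_U$ is precisely the coefficient extractor applied to $U$ acting on a monomial, so I would start by writing $\psi x^n = \sum_{j} \coeff{n}{j}_\psi x^j$ and then apply $\phi$ to this expansion.

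First I would apply $\phi$ to both sides of the expansion of $\psi x^n$. By linearity of $\phi$ (which is an endomorphism of $\P_n$), we get
\[
\phi\psi x^n = \phi \sum_{j} \coeff{n}{j}_\psi x^j = \sum_{j} \coeff{n}{j}_\psi \, \phi x^j = \sum_{j} \coeff{n}{j}_\psi \sum_{k} \coeff{j}{k}_\phi x^k.
\]
Then I would extract the coefficient of $x^k$ from the left-hand side, which is $\coeff{n}{k}_{\phi\psi}$ by definition, and match it against the double sum on the right. Swapping the order of summation and collecting the coefficient of $x^k$ yields $\coeff{n}{k}_{\phi\psi} = \sum_j \coeff{j}{k}_\phi \coeff{n}{j}_\psi$, which is the claimed identity once the summation range is pinned down.

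The remaining step is bookkeeping on the index range. Since $\phi,\psi \in \L(\P_n)$, they do not raise degree beyond $n$, so $\coeff{n}{j}_\psi = 0$ for $j > n$, forcing $j \le n$. Likewise $\coeff{j}{k}_\phi = 0$ whenever $k > j$ (a polynomial of degree at most $j$ has no $x^k$ term for $k > j$), which forces $j \ge k$. Together these restrict the sum to $k \le j \le n$, matching the stated bounds $\sum_{j=k}^n$. I do not anticipate a genuine obstacle here; the only point requiring mild care is justifying the interchange of the two finite sums and confirming that no boundary terms are silently dropped, both of which are immediate since all sums are finite for operators in $\L(\P_n)$.
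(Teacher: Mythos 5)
Your proof is correct and takes essentially the same route as the paper's: expand $\psi x^n$ in its coefficients, apply $\phi$ by linearity, expand again, exchange the finite double sum, and identify the coefficient of $x^k$ against the definitional expansion of $\phi\psi x^n$. The only cosmetic difference is that you justify the summation bounds $k \leq j \leq n$ explicitly via vanishing coefficients, whereas the paper builds those bounds into the sums from the outset.
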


\begin{proof}
    On the one hand $\phi \psi x^n = \sum_{k=0}^n \coeff{n}{k}_{\phi\psi} x^k$, while on the other hand
    \begin{align*}
         &\phi \psi x^n = \phi \sum_{j=0}^n \coeff{n}{j}_\psi x^j = \sum_{j=0}^n \coeff{n}{j}_\psi \sum_{k=0}^j \coeff{j}{k}_\phi x^k \\
         ={}& \sum_{0 \leq k \leq j \leq n} \coeff{j}{k}_\phi \coeff{n}{j}_\psi x^k = \sum_{k=0}^n \pa{\sum_{j = k}^n \coeff{j}{k}_\phi \coeff{n}{j}_\psi} x^k.
    \end{align*}
   
    Thus, by identifying the coefficients, we get the desired result.
\end{proof}

\begin{example}\label{x:coconstant}
    The "connection constant problem" for two basic sequences $\{\phi_n(x)\}_{n\in\N}$ and $\{\psi_n(x)\}_{n\in\N}$, i.e., the problem of expressing elements of a polynomial basis in terms of the other, is solved by the equation
    \[
    \phi_n(x) = \sum_{k=0}^n \coeff{n}{k}_{\psi^{-1}\phi} \psi_k(x),
    \]
    which becomes obvious upon application of $\psi^{-1}$. The coefficients can thus be computed by analyzing $\psi^{-1} \phi$, which is an umbral operator thanks to \Cref{t:Shefferclosed}, or by separately analyzing $\psi^{-1}$ and $\phi$, and applying \Cref{p:compo}.
\end{example}

In one of his exercises, Knuth \cite[p.~136]{knuth1997} asked to prove that the \emph{binomial transform} he introduced was an involution. There is an analogous invertible transform using Stirling numbers known as the \emph{Stirling transform} \cite{bernstein1995}, which also appears in Knuth's work \cite[p.~192, 310]{graham1994}. Using \Cref{p:compo}, we demonstrate not only that these transforms are part of a broad family of invertible transforms \cite[p.~738]{rota1973}, but that they also possess a dual invertible transform.
\begin{theorem}[Inversion formulas]\label{t:inversion}
Let $\phi\in\hL$ and $(a_i)_i, (b_i)_i$ commutes with respect to addition.
\division{
For $n \in \N$ and $0 \leq m \leq n$
\[
\sum_{k=m}^n \coeff{n}{k}_\phi b_k = a_n \eq b_n = \sum_{k=m}^n \coeff{n}{k}_{\phi^{-1}} a_k.
\]
}{
For $k \in \N$ and $k \leq j \leq \infty$
\[
\sum_{n=k}^j \coeff{n}{k}_\phi b_n = a_k \eq b_k = \sum_{n=k}^j \coeff{n}{k}_{\phi^{-1}} a_n.
\]
} 
\end{theorem}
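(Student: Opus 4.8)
The whole argument rests on a single orthogonality relation between the coefficients of $\phi$ and those of $\phi^{-1}$, so the first thing I would record is that the identity operator satisfies $\coeff{n}{k}_1 = \delta_{n,k}$, since $1\cdot x^n = x^n$. Feeding $p=1,\,q=-1$ and then $p=-1,\,q=1$ into the iteration formula (\Cref{c:iterformula}) and using $\phi^0 = 1$ then yields the two dual orthogonality relations
\[
\sum_{j=k}^n \coeff{j}{k}_\phi \coeff{n}{j}_{\phi^{-1}} = \delta_{n,k} = \sum_{j=k}^n \coeff{j}{k}_{\phi^{-1}} \coeff{n}{j}_\phi,
\]
which are exactly the inverse-matrix identities one expects for the two triangular arrays $(\coeff{n}{k}_\phi)$ and $(\coeff{n}{k}_{\phi^{-1}})$. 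Every subsequent step is a mechanical consequence of these.

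For the left-hand formula I would prove only the forward implication, the converse following verbatim after interchanging the roles of $\phi$ and $\phi^{-1}$ (legitimate since $(\phi^{-1})^{-1}=\phi$, and the statement is symmetric under $(\phi,a,b)\mapsto(\phi^{-1},b,a)$). Assuming $a_n = \sum_{k=m}^n \coeff{n}{k}_\phi b_k$ for every $n\geq m$, I would substitute this into $\sum_{k=m}^n \coeff{n}{k}_{\phi^{-1}} a_k$, swap the order of the resulting finite double sum, and collapse the inner sum by the first orthogonality relation to a Kronecker delta, which leaves exactly $b_n$. The right-hand formula is handled as a mirror image: summing over the \emph{upper} index, I would substitute $a_k = \sum_{n=k}^j \coeff{n}{k}_\phi b_n$ into $\sum_{n=k}^j \coeff{n}{k}_{\phi^{-1}} a_n$, interchange, and apply the \emph{second} orthogonality relation to recover $b_k$. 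The only bookkeeping to watch is that after the interchange the surviving index is pinned by the delta to a value that always lies inside the summation range, so no boundary term is lost.

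The one genuinely delicate point is the right-hand formula in the case $j=\infty$, where the sums are no longer finite and the interchange of summation is not automatic in a bare abelian group. I would dispose of it by observing that, once the interchange is performed, the inner sum is finite (the coefficients vanish outside $k\leq\cdot\leq\ell$) and the Kronecker delta collapses the outer sum to the single term $b_k$; hence the manipulation is justified as soon as each of the two displayed series is assumed to be a well-defined element of the group (for instance when $(b_n)$ is eventually zero, or when the group carries a topology in which the series converge). Under that convergence proviso the computation is word-for-word identical to the finite case, so the main obstacle here is one of summability rather than of algebra.
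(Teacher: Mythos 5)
Your proposal is correct and follows essentially the same route as the paper's proof: the orthogonality relations obtained from \Cref{c:iterformula} with $p+q=0$ (using $\coeff{n}{k}_{\phi^0}=\delta_{n,k}$), followed by substitution, interchange of the finite double sum, and collapse via the Kronecker delta, with the converse direction handled by the symmetry $(\phi,a,b)\mapsto(\phi^{-1},b,a)$. Your additional remark on summability in the $j=\infty$ case is a genuine point of care that the paper passes over in silence, but it does not change the underlying argument.
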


\begin{proof}
    By replacing $b_i$ in the formula of $a_i$ with its expression
    \division{
    \begin{align*}
        &\sum_{j=m}^n \coeff{n}{j}_\phi \sum_{k=m}^j \coeff{j}{k}_{\phi^{-1}} a_k \\
        &= \sum_{m\leq k \leq j\leq n} \coeff{n}{j}_\phi \coeff{j}{k}_{\phi^{-1}} a_k \\
        &= \sum_{k=m}^n a_k \sum_{j=k}^n  \coeff{n}{j}_\phi \coeff{j}{k}_{\phi^{-1}} \\
        &= \sum_{k=m}^n a_k \delta_{n,k} = a_n,
    \end{align*}
    }{
    \begin{align*}
       &\sum_{m=k}^j \coeff{m}{k}_\phi \sum_{n=m}^j \coeff{n}{m}_{\phi^{-1}} a_n\\
       &= \sum_{k\leq m \leq n\leq j} \coeff{m}{k}_\phi \coeff{n}{m}_{\phi^{-1}} a_n \\
       &= \sum_{n=k}^j a_n \sum_{m=k}^n  \coeff{m}{k}_\phi \coeff{n}{m}_{\phi^{-1}} \\
       &= \sum_{n=k}^j a_n \delta_{k, n} = a_k,
    \end{align*}
    }
    where we have used \Cref{p:compo} in the last steps.
\end{proof}

\begin{example}\label{x:binomtransf}
    The binomial transform is the case where $\phi = E$ with $\coeff{n}{k}_\phi = \binom{n}{k}$ and $\coeff{n}{k}_{\phi^{-1}} = \binom{n}{k} (-1)^{n-k}$. For the Stirling transform, $\phi = \varphi$ and the coefficients are given in \cref{e:Stir}.
\end{example}

Umbral operators are characterized by their associated polynomial sequence being of binomial type (\Cref{t:deltabinom}). The binomial type property can, in turn, be characterized by this identity on the coefficients, specifically through an identification of the coefficient in front of the monomial $x^i y^j$ in \cref{e:binom}.

\begin{proposition}[{\cite[Prop.~4.3]{roman1978}}]\label{p:coeffbinom}
    $\phi$ is an umbral operator if and only if
    \[
    \binom{i+j}{i} \coeff{n}{i+j}_{\phi} = \sum_{k=0}^n \binom{n}{k} \coeff{k}{i}_\phi \coeff{n-k}{j}_\phi.
    \]
\end{proposition}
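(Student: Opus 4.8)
The plan is to recognize the stated identity as nothing more than the coefficient-by-coefficient translation of the binomial-type relation of \Cref{d:binom}, and then to route the equivalence through \Cref{t:deltabinom}, which already pairs umbral (basic) operators with binomial-type polynomial sequences. Throughout I read the coefficients as $\coeff{n}{k}_\phi = [x^k]\phi_n(x)$, so that $\phi_n(x) = \sum_k \coeff{n}{k}_\phi\, x^k$, and I treat $\phi_n(x+y)$ as a polynomial in the two variables $x,y$.

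For the forward direction, suppose $\phi$ is umbral for a delta operator $Q$. Then $\{\phi_n\}_{n\in\N}$ is its basic set, hence of binomial type by \Cref{t:deltabinom}, so $\phi_n(x+y) = \sum_{k=0}^n \binom{n}{k}\phi_k(x)\phi_{n-k}(y)$. I would extract the coefficient of $x^i y^j$ from both sides. On the left, expanding $\phi_n(x+y) = \sum_m \coeff{n}{m}_\phi (x+y)^m$ and using the binomial theorem, the monomial $x^i y^j$ arises only from $m = i+j$, contributing $\binom{i+j}{i}\coeff{n}{i+j}_\phi$. On the right, the factor $\phi_k(x)\phi_{n-k}(y)$ contributes $\coeff{k}{i}_\phi \coeff{n-k}{j}_\phi$, giving $\sum_{k=0}^n \binom{n}{k}\coeff{k}{i}_\phi \coeff{n-k}{j}_\phi$. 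Equating the two expressions is exactly the claimed identity.

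The converse runs the same computation backwards: since a polynomial in two variables is determined by its coefficients, the identity holding for all $i,j$ says precisely that $\phi_n(x+y)$ and $\sum_{k=0}^n \binom{n}{k}\phi_k(x)\phi_{n-k}(y)$ are the same polynomial, i.e. $\{\phi_n\}_{n\in\N}$ is of binomial type. Applying \Cref{t:deltabinom} in the other direction, $\{\phi_n\}_{n\in\N}$ is the basic set of some delta operator, so $\phi$ is umbral.

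The only genuine obstacle is the standing hypothesis that $\{\phi_n\}_{n\in\N}$ is a bona fide polynomial sequence (equivalently $\phi \in \hL$), which must be in force before \Cref{t:deltabinom} can be invoked in the converse. The identity alone is satisfied degenerately by $\phi = 0$, where all coefficients vanish yet $\phi$ is not umbral; taking $n=i=j=0$ forces only $\coeff{0}{0}_\phi \in \{0,1\}$, and one must exclude the collapsed branch $\coeff{0}{0}_\phi = 0$. Since umbral operators lie in $\hL$ by definition, assuming $\phi \in \hL$ guarantees $\deg \phi_n = n$, so $\phi_0 = 1$, the sequence $\{\phi_n\}_{n\in\N}$ is a basis, and the invertibility demanded of an umbral operator is automatic. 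With this assumption recorded, the two coefficient extractions close the proof with no further work.
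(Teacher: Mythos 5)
Your proposal is correct and follows essentially the same route as the paper's own proof: invoke \Cref{t:deltabinom} to identify umbral operators with binomial-type sequences, then identify the coefficient of $x^i y^j$ on both sides of \cref{e:binom}. Your additional remark about the standing hypothesis $\phi \in \hL$ (needed to rule out degenerate solutions in the converse) is a sensible clarification of what the paper leaves implicit, but it does not change the argument.
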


To conclude this section, we want to address a simple yet surprising corollary of the Steffensen formula. It is known that deriving the coefficients of integer exponents in power series presents considerable challenges. The following proposition provides a formula for a finite set of these coefficients in terms of the coefficients of an umbral operator.

\begin{proposition}\label{p:powercoeffs}
    Let $\phi \leadsto Q$, $A = D/Q$ and $\tilde A(t)^n := \sum a_k^{(n)} \frac{t^k}{k!}$ then for $1\leq k \leq n$
    \[
    a_{n-k}^{(n)} =\coeff{n}{k}_\phi\binom{n-1}{k-1}^{-1}.
    \]
\end{proposition}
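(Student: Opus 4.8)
The plan is to read off both sides of the identity directly from the Steffensen formula of \Cref{t:closedforms}. Writing $\phi_n = \phi x^n$ for the basic set of $Q$, that formula reads $\phi_n(x) = \x (D/Q)^n x^{n-1} = \x A^n x^{n-1}$, so the operator $A = D/Q$ appearing in the statement is precisely the one governing the monomial expansion of $\phi_n$. The first thing I would record is that $A$ is shift-invariant: indeed it is an Appell operator by \Cref{c:Q/R} (both $D$ and $Q$ are delta operators), so the indicator machinery of \Cref{d:indic} applies to it.

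Next I would expand $A^n$ as a power series in $D$. Since $A = \tilde A(D)$ and $f \mapsto f(D)$ is a ring isomorphism (the case $Q = D$ of \Cref{t:iso}), we have $A^n = \tilde A(D)^n = (\tilde A^n)(D)$, and the defining expansion $\tilde A(t)^n = \sum_j a_j^{(n)} t^j/j!$ yields
\[
A^n = \sum_{j \ge 0} \frac{a_j^{(n)}}{j!}\, D^j.
\]
Applying this to $x^{n-1}$, using $D^j x^{n-1} = (n-1)_j\, x^{n-1-j}$ (which vanishes for $j \ge n$), and then multiplying by $\x$ gives
\[
\phi_n(x) = \x A^n x^{n-1} = \sum_{j=0}^{n-1} \frac{a_j^{(n)}}{j!}\,(n-1)_j\, x^{n-j}.
\]
Reading off the coefficient of $x^{k}$, i.e. setting $j = n-k$ (so that $k$ ranges over $1 \le k \le n$ exactly where the surviving terms $0 \le j \le n-1$ live), produces $\coeff{n}{k}_\phi = \frac{a_{n-k}^{(n)}}{(n-k)!}\,(n-1)_{n-k}$.

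The final step, and the only place needing a small computation, is to invert this relation and recognize the binomial factor. Solving for $a_{n-k}^{(n)}$ leaves the ratio $(n-k)!/(n-1)_{n-k}$, so I would verify the elementary identity
\[
\frac{(n-1)_{n-k}}{(n-k)!} = \frac{(n-1)!}{(k-1)!\,(n-k)!} = \binom{n-1}{k-1},
\]
using that the falling factorial $(n-1)_{n-k} = (n-1)(n-2)\cdots k$ telescopes to $(n-1)!/(k-1)!$. Substituting back gives $a_{n-k}^{(n)} = \coeff{n}{k}_\phi \binom{n-1}{k-1}^{-1}$, as claimed. The main obstacle is purely bookkeeping: keeping the reindexing $j \leftrightarrow n-k$ straight and confirming that the advertised range $1 \le k \le n$ coincides with the range $0 \le j \le n-1$ on which the $D^j$-terms survive.
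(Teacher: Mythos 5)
Your proof is correct and follows essentially the same route as the paper: both expand $A^n$ via its indicator, apply the Steffensen formula $\phi_n(x) = \x A^n x^{n-1}$, and identify coefficients after reindexing $j = n-k$. The only cosmetic difference is that the paper absorbs the factor $(n-1)_j/j!$ into $\binom{n-1}{j}$ immediately and leaves the symmetry $\binom{n-1}{n-k} = \binom{n-1}{k-1}$ implicit, whereas you verify that identity explicitly at the end.
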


\begin{proof}
    Using the definition of the coefficients and the Steffensen formula of \Cref{t:closedforms}, we obtain
    \[
    \sum_{k=0}^n \coeff{n}{k}_\phi x^n = \phi_n(x) = \x A^n x^{n-1}
        = \x \sum_{k=0}^\infty a_k^{(n)} \frac{D^k}{k!} x^{n-1} = \sum_{k=0}^{n-1} a_k^{(n)} \binom{n-1}{k} x^{n-k},
    \]
    from which we can set $j = n-k$ in the right sum to identify the coefficients.
\end{proof}

\begin{example}\label{x:genBern}
    For $\varphi \leadsto \Delta$, $A = \Bern$, the Bernoulli operator specified in \Cref{x:Bern}. The \emph{generalized Bernoulli numbers} are defined by
    \begin{equation}
        \tilde\Bern(t)^n = \pa{\frac{t}{e^t - 1}}^n = \sum_{k=0}^\infty \Bern_k^{(n)} \frac{t^k}{k!},
    \end{equation}
    extending the \emph{Bernoulli numbers} $\Bern_k := \Bern_k(0) = \Bern^{(1)}_k$. By \Cref{p:powercoeffs}, the generalized one can be expressed in terms of Stirling numbers, defined in \Cref{x:Stir}, for $0 \leq k < n$
    \begin{equation}
      \Bern_k^{(n)} = \coeff{n}{n-k}_\varphi \binom{n-1}{n-k-1}^{-1} = (-1)^k \coeff{n}{n-k} \binom{n-1}{k}^{-1},
    \end{equation}
    which appears in \cite[Thm.~2.2]{moll2015}.
\end{example}

\subsection{The generating functions}

In this section, we briefly study the generating functions of the basic sequences and their coefficients. It is known that generating functions alone are not sufficient to build the theory of umbral calculus \cite{rota1973}, a fact that is exemplified by the absence in Sheffer's work \cite{sheffer1939} of certain results derived by Steffensen \cite{steffensen1941} using operators. Nevertheless, their study provides a very powerful two-way bridge to formal power series theory.

\begin{definition}
    We define the bivariate exponential generating function of $\phi \in \hL$ and its coefficients by
    \[
    \G_\phi(x, t) := \sum_{n=0}^\infty \phi_n(x) \frac{t^n}{n!}, \qquad \g_\phi (k, t) := \sum_{n=k}^\infty \coeff{n}{k}_\phi \frac{t^n}{n!}.
    \]
\end{definition}

\begin{proposition}[coefficients of G]\label{p:gG}
    \[
    [t^n]\G_\phi(x,t) = \frac{\phi_n(x)}{n!}, \qquad [x^k]\G_\phi(x,t) = \g_\phi(k, t).
    \]
\end{proposition}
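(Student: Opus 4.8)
The statement to prove is Proposition \ref{p:gG}, which asserts two coefficient extractions:
\[
[t^n]\G_\phi(x,t) = \frac{\phi_n(x)}{n!}, \qquad [x^k]\G_\phi(x,t) = \g_\phi(k, t).
\]

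The plan is to unfold the definition of the bivariate generating function $\G_\phi(x,t) = \phi e^{xt}$ in two different ways, expanding the exponential as a power series in each of the two variables and using the fact that $\phi$ acts only on $x$.

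First I would establish the $t$-coefficient formula. Expanding $e^{xt} = \sum_{n=0}^\infty x^n \frac{t^n}{n!}$ as a power series in $t$, and using the linearity of $\phi$ together with $\phi_n(x) = \phi x^n$, we obtain
\[
\G_\phi(x,t) = \phi \sum_{n=0}^\infty x^n \frac{t^n}{n!} = \sum_{n=0}^\infty \phi_n(x) \frac{t^n}{n!},
\]
which is exactly the series given in the definition. Extracting the coefficient of $t^n$ then yields $[t^n]\G_\phi(x,t) = \phi_n(x)/n!$, the first claimed identity. One subtlety to note is that $\phi$ is an operator on power series in $x$, so interchanging $\phi$ with the infinite $t$-summation requires that $\phi$ be applied termwise; since $\phi$ acts only on the variable $x$ and $t$ is an independent formal parameter, this interchange is legitimate.

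For the second identity, I would instead expand each $\phi_n(x)$ in the monomial basis using the coefficient definition $\phi_n(x) = \phi x^n = \sum_{k} \coeff{n}{k}_\phi x^k$, substitute this into the series for $\G_\phi$, and then interchange the order of the double summation. This gives
\[
\G_\phi(x,t) = \sum_{n=0}^\infty \sum_{k=0}^n \coeff{n}{k}_\phi x^k \frac{t^n}{n!} = \sum_{k=0}^\infty x^k \sum_{n=k}^\infty \coeff{n}{k}_\phi \frac{t^n}{n!} = \sum_{k=0}^\infty x^k \, \g_\phi(k,t),
\]
where the inner sum is precisely $\g_\phi(k,t)$ by its definition. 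Extracting the coefficient of $x^k$ recovers the second identity.

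The only genuine obstacle is bookkeeping the summation ranges correctly: since $\coeff{n}{k}_\phi = 0$ for $k > n$ (because $\phi \in \hL$ preserves degree) and for $k < 0$, the swap of the $n$ and $k$ sums is justified and the inner sum naturally begins at $n = k$, matching the definition of $\g_\phi$. I expect no deeper difficulty, as both parts are essentially a matter of reading off coefficients from two valid rearrangements of the same double series.
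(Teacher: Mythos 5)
Your proposal is correct and matches the paper's proof: the first identity is read off from the definition (your expansion of $\phi e^{xt}$ just makes explicit why the two forms in the definition agree), and the second is obtained by exactly the same exchange of the double summation, with the same observation that vanishing coefficients for $k>n$ make the inner sum start at $n=k$.
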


\begin{proof}
    The first equality is a straightforward restatement of the definition. For the second, we shall simply change the order of summation
    \[
    \G_\phi(x, t) = \sum_{n=0}^\infty \frac{t^n}{n!} \phi_n(x) = \sum_{n=0}^\infty \frac{t^n}{n!} \sum_{k=0}^n \coeff{n}{k}_\phi x^k = \sum_{0 \leq k \leq n < \infty} \coeff{n}{k}_\phi \frac{t^n}{n!} x^k =\sum_{k=0}^\infty \g_\phi(k, t) x^k.
    \]
\end{proof}

\begin{theorem}\label{t:G}
    $s$ is a Sheffer operator if and only if there exist compositionally invertible $f$ and multiplicatively invertible $g$ in $\C\bbra{x}$ such that
    \begin{equation}\label{e:Ggf}
        \G_s(x, t) = g(t) e^{x f(t)}.
    \end{equation}
    In addition, $s = A \phi$ with $\phi \leadsto Q := f^{-1}(D)$ and $A = g(Q)$.
\end{theorem}

\begin{proof}
    \begin{itemize}
        \item[( $\imp$ )] By the expansion theorem (\Cref{t:expansion})
        \[
        E^a A = \sum_{k=0}^\infty (\E E^a A \phi_k(x)) \frac{Q^k}{k!} = \sum_{k=0}^\infty (\Ev{a} s_k(x)) \frac{Q^k}{k!} = \sum_{k=0}^\infty s_k(a) \frac{Q^k}{k!} = \G_s(a, Q).
        \]
        It thus follows by the isomorphism theorem that $e^{at}\tilde A(t) =\G_s(a, \tilde Q(t))$, so that setting $f = \tilde Q^{-1}$ and $g = \tilde A \circ \tilde Q^{-1}$ yields the desired generating function.
        \item[( $\isimp$ )] Given that the order of $g$ is $0$ and the order of $f$ is $1$, $g f^k$ is of order $k$ and there exists coefficients $b_{n, k}$ such that $b_{n, n} \neq 0$ and
        \begin{align*}
            \G_s(x, t)
            &= g(t) e^{x f(t)} =\sum_{k=0}^\infty x^k g(t) \frac{f(t)^k}{k!} = \sum_{k=0}^\infty x^k \sum_{n=k}^\infty b_{n,k} \frac{t^n}{n!} = \sum_{n=0}^\infty \frac{t^n}{n!} \sum_{k=0}^n x^k b_{n,k}.
        \end{align*}
        So the $s_n(x) := [t^n / n!]\G_s(x, t)$ form a polynomial sequence. We only need to show that it is a Sheffer sequence. To that end, consider the equation $\G_{D_x^k s}(x, t) = f(t)^k\G_s(x, t)$ valid for all $k\in\N$. This implies that for all formal power series $F$, $\G_{F(D_x) s}(x, t) = F(f(t))\G_s(x, t)$, so in particular with $F = f^{-1}$ and $Q := f^{-1}(D)$, we find that
        \[
       \sum_{n=0}^\infty \frac{t^n}{n!} Q s_n(x) = \G_{Q s}(x, t) = t \G_s(x, t) = \sum_{n=1}^\infty \frac{t^n}{n!} n s_{n-1}(x).
        \]
        Thus, by identifying the coefficients, we arrive at $Q s_n(x) = n s_{n-1}(x)$ for $n > 0$. $Q$ being a delta operator, this concludes the proof.
    \end{itemize}
\end{proof}

\begin{definition}
    For an umbral operator $\phi \leadsto Q$, we will say that $\phi$ is generated by $f$ if $Q = f^{-1}(D)$.
\end{definition}

\begin{corollary}\label{c:g}
    If $s$ is a Sheffer operator, according to \Cref{p:p to s}, $s = A \phi$ for an Appell operator $A$ and an umbral operator $\phi$. If $\phi$ is generated by $f$ then
    \begin{equation}
        \g_s(k,t) =  \tilde A(f(t)) \frac{f(t)^k}{k!}.
    \end{equation}
\end{corollary}

\begin{proof}
    We identify the coefficients in front $x^k$ in the formula from \Cref{t:G} through \Cref{p:gG}.
\end{proof}

With $A = 1$, we specialize the two previous results to basic sequences.

\begin{corollary}[basic generating functions]\label{c:gphi}
    If $\phi$ is generated by $f$,
    \[
    \G_\phi(x, t) = e^{x f(t)} \quad\et\quad \g_\phi(k, t) = \frac{f(t)^k}{k!}.
    \]
\end{corollary}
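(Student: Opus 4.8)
The plan is to obtain \Cref{c:gphi} as the immediate specialization of \Cref{t:G} and \Cref{c:g} to the case $A = 1$ (the identity operator). The identity operator is trivially an Appell operator, being shift-invariant and invertible (by \Cref{c:SI Appell}), so the hypotheses of both results are satisfied. With $A = 1$ we have $s = 1 \cdot \phi = \phi$, so the Sheffer sequence reduces to the basic sequence $\{\phi_n\}_{n\in\N}$, and correspondingly $\G_s = \G_\phi$ and $\g_s = \g_\phi$.

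The only remaining ingredient is to evaluate the indicator $\tilde A$ at $A = 1$. Since $\tilde A(D) \defeq A = 1$ (the identity, which by our convention is the scalar operator with $c = 1$), the associated power series is the constant $\tilde A(t) = 1$. Substituting this into the formula of \Cref{t:G} gives
\[
\G_\phi(x, t) = \tilde A(\tilde Q^{-1}(t))\, e^{x \tilde Q^{-1}(t)} = 1 \cdot e^{x \tilde Q^{-1}(t)} = e^{x \tilde Q^{-1}(t)},
\]
which is the first claimed equality. Likewise, substituting $\tilde A \equiv 1$ into \Cref{c:g} yields
\[
\g_\phi(k, t) = \tilde A(\tilde Q^{-1}(t))\, \frac{\tilde Q^{-1}(t)^k}{k!} = \frac{\tilde Q^{-1}(t)^k}{k!},
\]
the second claimed equality.

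There is essentially no obstacle here: the corollary is a direct corollary in the genuine sense, and all the real work has already been carried out in establishing \Cref{t:G} (via the expansion theorem and the isomorphism theorem) and its coefficient-extraction consequence \Cref{c:g}. The only point requiring a moment's care is confirming that the identity operator legitimately plays the role of $A$, i.e. that it is an Appell operator with constant indicator $\tilde A \equiv 1$; this is immediate from the definition of the indicator (\Cref{d:indic}) applied to $A = 1$, since $\tilde A(D) = 1$ forces $\tilde A(t) = 1$. Thus the proof amounts to a single substitution into each of the two preceding formulas.
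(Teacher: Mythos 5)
Your proposal is correct and matches the paper's own argument exactly: the paper obtains \Cref{c:gphi} by specializing \Cref{t:G} and \Cref{c:g} to $A = 1$, precisely as you do. Your additional check that the identity operator is an Appell operator with constant indicator $\tilde A \equiv 1$ is a worthwhile detail the paper leaves implicit.
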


\begin{example}
    Since $\tilde\Delta(t) = e^t - 1$, $\varphi$ is generated by $\log(1+t)$, therefore
    \begin{equation}\label{e:Gphi}
        \G_\varphi(x, t) = e^{x\log(1+t)} = (1+t)^x.
    \end{equation}

   As a helpful rule of thumb, one can think of  $\G_\phi(x, t)$ as "$\phi e^{xt}$". This identification is not rigorous because $\phi$ is not defined over formal power series and, in fact, cannot be extended to an operator of $\L(\C\bbra{x})$ by linearity. The case of $\varphi$ provides a concrete counterexample: according to \cref{e:Gphi},
    \begin{equation}
        \G_\varphi(x, -1) = 0^x = \begin{cases}
            1 \com{if} x=0 \\
            0 \com{otherwise,}
        \end{cases}
    \end{equation}
    which shows that "$\varphi e^{-x}$" does not correspond to a formal power series.
\end{example}

\subsection{Advanced computational tools}

The generating functions provide a surprising tool for computing the coefficients of an umbral operator.

\begin{proposition}\label{p:Q-1/D}
    If $\phi$ is generated by $f$, then the following equality holds
    \[
    \coeff{n}{k}_\phi = \coeff{n}{k}_{(f(D)/D)^k}.
    \]
\end{proposition}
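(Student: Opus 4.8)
The plan is to show that the two sides have the same bivariate coefficient generating function $\g$, which by \Cref{p:gG} determines all the coefficients. By \Cref{c:gphi} the left-hand side is encoded by $\g_\phi(k,t) = \tilde Q^{-1}(t)^k/k!$, so it suffices to prove that the diagonal family of operators $(Q^{[-1]}/D)^k$ produces exactly this series in its $k$-th coefficient slot.

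First I would pin down $Q^{[-1]}/D$. Since $\phi \leadsto Q$, \Cref{p:inverses}(c) gives that $Q^{[-1]}$ is again a delta operator, and \Cref{p:inverses}(b) lets me write $Q^{[-1]} = D A$ for a unique Appell operator $A$; by \Cref{d:division} this is exactly the statement $Q^{[-1]}/D = A$, whose indicator is $\tilde A(t) = \tilde Q^{-1}(t)/t$ (a unit power series, as $\tilde Q^{-1}$ has order $1$). Because $A$ is shift-invariant, so is $A^k = (Q^{[-1]}/D)^k$, and by the isomorphism theorem (\Cref{t:iso}) its indicator is $\widetilde{A^k}(t) = (\tilde Q^{-1}(t)/t)^k$. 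In particular $A^k$ is itself an Appell operator, hence lies in $\hL$, so the notation $\coeff{n}{k}_{(Q^{[-1]}/D)^k}$ is meaningful.

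Next I would compute the relevant generating function. Applying \Cref{c:g} in the special case where the delta operator is $D$ (so that the compositional inverse there is the identity $t$) and the Sheffer operator is the Appell operator $A^k$, I obtain $\g_{A^k}(k,t) = \widetilde{A^k}(t)\, t^k/k! = (\tilde Q^{-1}(t)/t)^k\, t^k/k! = \tilde Q^{-1}(t)^k/k!$. This is precisely $\g_\phi(k,t)$ from \Cref{c:gphi}. Comparing the coefficients of $t^n$ through \Cref{p:gG} then yields $\coeff{n}{k}_\phi = \coeff{n}{k}_{(Q^{[-1]}/D)^k}$ for every $n \geq k$; for $n < k$ both sides vanish by degree considerations, so the equality holds in all cases.

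The only genuine care-point is notational rather than mathematical: the exponent $k$ on the operator is tied to the lower index of the coefficient symbol, so one reads off a diagonal family $(A^k)_k$ rather than a fixed operator, and it is the factor $t^k$ supplied by \Cref{c:g} that cancels the $t^{-k}$ hidden inside $(\tilde Q^{-1}(t)/t)^k$ to make the two series match on the nose. If one prefers to bypass \Cref{c:g}, the same conclusion follows by expanding $A^k = \widetilde{A^k}(D)$ directly, using $D^j x^n = \frac{n!}{(n-j)!}\,x^{n-j}$ and extracting $[x^k]$, which gives $\coeff{n}{k}_{(Q^{[-1]}/D)^k} = \frac{n!}{k!}\,[t^n]\,\tilde Q^{-1}(t)^k$, matching $n!\,[t^n]\,\g_\phi(k,t)$.
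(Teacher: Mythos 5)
Your proof is correct, and it rests on the same two pillars as the paper's---\Cref{c:gphi} and the isomorphism theorem---but it routes the coefficient extraction differently. The paper applies \Cref{t:iso} to \Cref{c:gphi} to write $\bra{Q^{[-1]}/D}^k = k!\,\g_\phi(k,D)/D^k$ as the explicit series $k!\sum_{p\geq k} \coeff{p}{k}_\phi D^{p-k}/p!$, applies this series to the monomial $x^n$, and identifies the coefficient of $x^k$ by hand. You instead first pin down $Q^{[-1]}/D$ as the Appell operator $A$ of \Cref{p:inverses} (b), observe $\widetilde{A^k}(t) = (\tilde Q^{-1}(t)/t)^k$, and then invoke \Cref{c:g} a second time---in the special case where the delta operator is $D$ and the Appell operator is $A^k$---so that the column generating function $\g_{A^k}(k,t) = \widetilde{A^k}(t)\,t^k/k! = \g_\phi(k,t)$ comes out of an already-proved corollary rather than a monomial computation. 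What your route buys: the extraction step is absorbed into \Cref{c:g}, and your remark that each $A^k$ is Appell (hence lies in $\hL$) makes the notation $\coeff{n}{k}_{(Q^{[-1]}/D)^k}$ legitimate, a point the paper leaves implicit. What it costs: a dependence on \Cref{t:G} and \Cref{c:g} in full strength, where the paper needs only the special case \Cref{c:gphi}. Note finally that your closing alternative---expanding $A^k = \widetilde{A^k}(D)$, using $D^j x^n = \frac{n!}{(n-j)!}x^{n-j}$, and extracting $[x^k]$---is, almost verbatim, the paper's own proof.
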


The formula finds its main use when $f(D) = Q^{[-1]}$ has a more convenient form than the delta operator $Q$ of $\phi$ and behaves well under exponentiation.

\begin{proof}
    By the isomorphism theorem applied to \Cref{c:gphi}
    \[
    \pa{\frac{f(D)}{D}}^k = k! \frac{\g_\phi(k, D)}{D^k} = k! \sum_{p=k}^\infty \coeff{p}{k}_{\phi} \frac{D^{p-k}}{p!} = k! \sum_{p=0}^\infty \coeff{p+k}{k}_{\phi} \frac{D^p}{(p+k)!}.
    \]
    We apply the equality to the monomials
    \[
    \pa{\frac{f(D)}{D}}^k x^n = k! \sum_{p=0}^n \coeff{p+k}{k}_{\phi} \frac{n!x^{n-p}}{(n-p)!(p+k)!},
    \]
    and identify the $k$-th coefficient
    \[
    \coeff{n}{k}_{(f(D)/D)^k} = k! \coeff{n-k+k}{k}_\phi \frac{n!}{k!(n-k+k)!} = \coeff{n}{k}_\phi.
    \]
\end{proof}

A well-known equivalence exists between the Lagrange inversion formula and the closed forms of \cite{chen1993}. We will briefly demonstrate this connection, using \Cref{p:powercoeffs} to bridge the two results.

\begin{theorem}[Lagrange-Bürman formula]\label{t:lagrange}
Let $f$ be an invertible power series. Then the following formula holds for $k \in \N^*$
\[
f^{-1}(t)^k = k\sum_{n=k}^\infty \frac{t^n}{n} [x^{n-k}] \pa{\frac{x}{f(x)}}^n,
\]  
\end{theorem}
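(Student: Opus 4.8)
The plan is to run the whole argument through the operational dictionary, using the coefficients $\coeff{n}{k}_\phi$ of an umbral operator as the bridge between the two sides. Since $f$ is invertible it is a power series of order $1$, so by \Cref{p:inverses} (c) there is a unique delta operator $Q$ whose indicator is $\tilde Q = f$; let $\phi \leadsto Q$ be its umbral operator. The key observation is that $\tilde Q^{-1} = f^{-1}$, so the left-hand side $f^{-1}(t)^k$ is literally $\tilde Q^{-1}(t)^k$. I will first express this through the generating function $\g_\phi$ of the coefficients, and then re-expand those coefficients using the Steffensen formula as already packaged in \Cref{p:powercoeffs}.

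First I would invoke \Cref{c:gphi}, which gives $\g_\phi(k,t) = \tilde Q^{-1}(t)^k/k!$. Unwinding the definition of $\g_\phi$ then yields
\[
f^{-1}(t)^k = \tilde Q^{-1}(t)^k = k!\sum_{n=k}^\infty \coeff{n}{k}_\phi \frac{t^n}{n!}.
\]
This reduces the theorem to identifying the single family of coefficients $\coeff{n}{k}_\phi$, which is exactly the output of the Steffensen formula.

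Next I would identify the Appell operator $A = D/Q$ featured in \Cref{p:powercoeffs}. Writing $f(t) = t\,g(t)$ with $g(0) \neq 0$, operator division corresponds under the isomorphism theorem (\Cref{t:iso}, via \Cref{d:division}) to division of indicators, so $\tilde A(t) = t/f(t)$, i.e. $\tilde A$ is the series $x/f(x)$. Hence $a_{n-k}^{(n)} = (n-k)!\,[x^{n-k}](x/f(x))^n$, and \Cref{p:powercoeffs} gives, for $1 \le k \le n$,
\[
\coeff{n}{k}_\phi = \binom{n-1}{k-1} a_{n-k}^{(n)} = \binom{n-1}{k-1}(n-k)!\,[x^{n-k}]\bra{\frac{x}{f(x)}}^n.
\]

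Substituting this back into the first display, all that remains is a formal collapse of factorials: the scalar factor simplifies as $k!\binom{n-1}{k-1}(n-k)!/n! = k/n$, which immediately produces the stated formula $f^{-1}(t)^k = k\sum_{n\ge k} (t^n/n)\,[x^{n-k}](x/f(x))^n$. I expect the only genuinely delicate point to be the identification $\tilde A(t) = t/f(t)$: one must be careful that operator division is matched correctly with power-series division through the isomorphism theorem, and that the index constraint $1 \le k \le n$ of \Cref{p:powercoeffs} lines up with the summation beginning at $n=k$. Everything else is routine bookkeeping, which is why the proof reduces to essentially two lines.
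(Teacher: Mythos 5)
Your proof is correct and takes essentially the same route as the paper's: both arguments identify $\coeff{n}{k}_\phi$ with $\binom{n-1}{k-1}(n-k)!\,[x^{n-k}]\bra{x/f(x)}^n$ via \Cref{p:powercoeffs} and then close with the generating-function identity $k!\,\g_\phi(k,t)=f^{-1}(t)^k$ from \Cref{c:gphi}. The only differences are cosmetic — you run the computation from the left-hand side rather than the right, and you make explicit the identification $\tilde A(t)=t/f(t)$ through operator division, which the paper leaves implicit.
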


\begin{proof}
    With \Cref{p:inverses}, we can write $x/f(x)= A(x)$. Let $\phi$ be the delta operator for $f(D)$. Using the notation and the result of \Cref{p:powercoeffs}, we get
    \[
    [x^{n-k}] A(x)^n = \frac{a^{(n)}_{n-k}}{(n-k)!} = \frac{\coeff{n}{k}_\phi}{(n-k)!\binom{n-1}{k-1}} = \frac{(k-1)!}{(n-1)!}\coeff{n}{k}_\phi,
    \]
    thus, with the help of \Cref{c:g}
    \[
    k\sum_{n=k}^\infty \frac{t^n}{n} [x^{n-k}] A(x)^n = k!\sum_{n=k}^\infty \frac{t^n}{n!} \coeff{n}{k}_\phi = k! \g_\phi(k, t) = f^{-1}(t)^k,
    \]
\end{proof}

Equivalently, we can rewrite the formula as
\[
f^{-1}(t)^k = \sum_{n=1}^\infty \frac{t^n}{n} [x^{n-1}] kx^{k-1} \pa{\frac{x}{f(x)}}^n.
\]
Summing it over $k$ with the coefficients of some power series $\Phi$ gives, by linearity, Bürman's formulation of the result
\begin{equation}
    \Phi(f^{-1}(t)) = \sum_{n=1}^\infty \frac{t^n}{n} [x^{n-1}] \Phi'(x)\pa{\frac{x}{f(x)}}^n.
\end{equation}

Niederhausen established an elegant theorem \cite[Thm.~2.2.11]{niederhausen2010} in umbral calculus that had, surprisingly, been overlooked until then. An equivalent result had, in fact, been discovered shortly before within the proof of a Bell polynomial identity \cite[eq.~(11)]{mihoubi2008}.

\begin{theorem}[Niederhausen]\label{t:Nieder}
    Let $\phi \leadsto Q$ be generated by $f$ and $\psi$ defined by
    \begin{equation}\label{e:Nieder}
        \psi_n(x) := \sum_{k=0}^n \binom{n}{k} \phi_{n-k}(k) x^k.
    \end{equation}
    Then $\psi$ is generated by $t e^{f(t)}$ and
    \begin{equation}
        \psi \leadsto R = \sum_{n=1}^\infty \frac{\phi_{n-1}(-n)}{n!} D^n.
    \end{equation}
    More generally
    \begin{equation}\label{e:R^k}
        R^k = k \sum_{n=k}^\infty \frac{\phi_{n-k}(-n)}{n(n-k)!} D^n.
    \end{equation}
\end{theorem}

We call the map $\phi \mapsto \psi$ the \emph{Niederhausen transform}. Its inverse is well-defined whenever $\psi$ is generated by $h$ and $\log(h(t)/t) = f(t)$ is an invertible power series, which occurs precisely when $h(t) = t + ct^2 + \ldots$ with $c \neq 0$. Informally, this means that, under these conditions, the coefficients of a basic sequence form another basic sequence -- a fact already suggested by the long known characterization of umbral operators in \Cref{p:coeffbinom}.

\begin{proof}
    Rewriting the \cref{e:Nieder} as
    \[
    \coeff{n+k}{k}_\psi \frac{t^{n+k}}{(n+k)!} = \inv{k!} \phi_n(k) \frac{t^{n+k}}{n!},
    \]
    and summing it over $\N$ yields for the left hand-side
    \[
    \sum_{n=0}^\infty \coeff{n+k}{k}_\psi \frac{t^{n+k}}{(n+k)!} = \sum_{n=k}^\infty \coeff{n}{k}_\psi \frac{t^n}{n!} = \g_\psi(k, t),
    \]
    while for the right hand-side
    \[
    \inv{k!} \sum_{n=0}^\infty \phi_n(k) \frac{t^{n+k}}{n!} = \frac{t^k \G_\phi(k, t)}{k!} = \frac{t^k e^{k f(t)}}{k!}.
    \]
    Therefore, we obtain the equality $\g_\psi(k, t) = (te^{f(t)})^k/k!$. By \Cref{c:gphi}, $\psi$ is generated by $h(t) := te^{f(t)}$. Furthermore, using \Cref{p:gG} in the calculation
    \[
    [x^{n-k}] \pa{\frac{x}{xe^{f(x)}}}^n = [x^{n-k}] e^{-n f(x)} = [x^{n-k}] \G_\phi(-n, x) = \frac{\phi_{n-k}(-n)}{(n-k)!},
    \]
    and combining the result with \Cref{t:lagrange} to compute $R = h^{-1}(D)$, we arrive at \cref{e:R^k}.
\end{proof}

\section{Sigma operators}\label{s:Sigma}

Although it is a simple consequence of \Cref{p:inverses} (a) and \Cref{p:Qa=0} (a) that delta operators are not invertible, it seems entirely natural to define some kind of \emph{antiderivative} analog to the derivative but for other delta operators. This idea dates back to Steffensen \cite{steffensen1941} where he defined the antiderivative $\bar D$ such that $D \bar D = 1$ and $(\bar D f)(0) = 0$ for any polynomial $f$, and extended the definition to delta operators via $\bar Q = \bar D (D/Q)$. Steffensen realized that $D$ and $\bar D$ did not commute since $\bar D D f(x) = f(x) - f(0)$. It is upon this fact that we will define pseudoinverses of delta operators that we shall call \emph{sigma operators}.

\begin{proposition}
    For any delta operator $Q$, there exists a unique operator $Q^{-1}$ such that
    \begin{equation}
       Q Q^{-1} = 1 \quad \et \quad Q^{-1} Q = 1 - \E. 
    \end{equation}
    Moreover, if $\{s_n(x)\}_{n\in\N}$ is a Sheffer set for $Q$,
    \begin{equation}\label{e:sigmasheffer}
        Q^{-1} s_n(x) = \frac{s_{n+1}(x) - s_{n+1}(0)}{n+1}.
    \end{equation}
    We call $Q^{-1}$ the sigma operator associated with $Q$.
\end{proposition}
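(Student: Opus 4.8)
The plan is to construct $Q^{-1}$ explicitly by prescribing its action on a Sheffer set, verify the two operator identities directly, and then argue uniqueness. Since $Q$ is a delta operator, it admits a basic set $\{p_n\}_{n\in\N}$ and, by \Cref{p:p to s}, any Sheffer set $\{s_n\}_{n\in\N}$ for $Q$ arises as $s = Ap$ for an Appell operator $A$. The natural candidate is to \emph{define} $Q^{-1}$ on the basis $\{s_n\}_{n\in\N}$ by the formula in \cref{e:sigmasheffer}, namely $Q^{-1}s_n(x) \defeq \bra{s_{n+1}(x) - s_n(0)}/(n+1)$, and extend by linearity. Because $\{s_n\}_{n\in\N}$ is a polynomial sequence, this determines a well-defined linear operator, and by \Cref{p:span} any two operators agreeing on it coincide, which is exactly what will give uniqueness at the end.

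First I would verify $QQ^{-1} = 1$. Applying $Q$ to the defining formula and using the Sheffer relation $Qs_{n+1}(x) = (n+1)s_n(x)$ together with \Cref{p:Qa=0}(a) (so that $Q$ annihilates the constant $s_n(0)$) yields
\[
Q Q^{-1} s_n(x) = \frac{Q s_{n+1}(x) - Q s_n(0)}{n+1} = \frac{(n+1)s_n(x)}{n+1} = s_n(x),
\]
so $QQ^{-1}$ agrees with the identity on the basis and hence equals $1$. Next I would check $Q^{-1}Q = 1 - \E$. Again using the Sheffer relation first,
\[
Q^{-1} Q s_n(x) = Q^{-1}\bra{n\,s_{n-1}(x)} = n \cdot \frac{s_n(x) - s_{n-1}(0)}{n} = s_n(x) - s_{n-1}(0),
\]
valid for $n\geq 1$; for $n=0$ we have $Qs_0(x)=0$ (a constant, since $s_0 \in \hP_0$), so $Q^{-1}Q s_0(x)=0 = s_0(x) - s_0(x)$. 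The remaining task is to identify $s_{n-1}(0)$ with $\E s_n(x) = s_n(0)$ for $n\geq 1$, i.e. to show that applying $1-\E$ to $s_n$ reproduces $s_n(x) - s_{n-1}(0)$; this requires relating the constant terms $s_n(0)$ and $s_{n-1}(0)$, which is where I expect the only real subtlety to lie.

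This is the main obstacle: the formula as stated subtracts $s_n(0)$ inside the definition of $Q^{-1}s_n$, yet the computation of $Q^{-1}Q$ naturally produces $s_{n-1}(0)$, so I must confirm these are the consistent choices making $Q^{-1}Q = 1-\E$ hold uniformly. I would resolve this by checking the well-definedness condition that $Q^{-1}$ maps into the correct space and that $\E Q^{-1} = 0$ (every output has zero constant term, since $Q^{-1}s_n(x)$ has constant term $\bra{s_{n+1}(0)-s_n(0)}/(n+1)$, which I would show vanishes precisely when the Sheffer normalization is tracked correctly via $s = Ap$ and $p_k(0)=0$ for $k>0$). Once $\E Q^{-1}=0$ is established, the identity $Q^{-1}Q = 1-\E$ follows because both sides annihilate constants and agree on the non-constant part. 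Finally, uniqueness is immediate: if $R$ satisfies $QR=1$, then $R = (1-\E+\E)R = Q^{-1}QR + \E R = Q^{-1} + \E R$, and imposing $Q^{-1}Q = 1-\E$ forces $\E R = \E Q^{-1} = 0$, pinning down $R = Q^{-1}$.
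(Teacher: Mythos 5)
Your construction fails at exactly the point you flagged as ``the only real subtlety,'' and the patch you propose there is false. Defining $Q^{-1}s_n(x) \defeq (s_{n+1}(x)-s_n(0))/(n+1)$, with the formula exactly as printed in the proposition, cannot satisfy $Q^{-1}Q = 1-\E$: your own computation gives $Q^{-1}Qs_n(x) = s_n(x) - s_{n-1}(0)$, whereas $(1-\E)s_n(x) = s_n(x)-s_n(0)$. Your proposed repair is to show $\E Q^{-1}=0$, i.e.\ that the constant terms $(s_{n+1}(0)-s_n(0))/(n+1)$ vanish once the Sheffer normalization is ``tracked correctly.'' That claim is false in general, and it fails already for the basic set itself: there $p_0(0)=1$ while $p_1(0)=0$, so your operator sends $p_0(x)\mapsto p_1(x)-1$, which has constant term $-1$. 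Concretely, for $Q=D$ and $s_n(x)=x^n$ you get $1\mapsto x-1$, hence $Q^{-1}Q\,x = x-1 \neq x = (1-\E)x$. A non-basic counterexample is the Appell set of Bernoulli polynomials $\Bern_n(x) = \Bern x^n$, whose constant terms are the Bernoulli numbers, certainly not all equal. So the step ``show $\E Q^{-1}=0$'' cannot be carried out, and the proof collapses.

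What you actually uncovered is an off-by-one typo in the statement: the formula should read $Q^{-1}s_n(x) = (s_{n+1}(x)-s_{n+1}(0))/(n+1)$. This is what the paper's own proof derives --- it defines $Q^{-1}p_n(x) \defeq p_{n+1}(x)/(n+1)$ on the basic set, expands $s_n$ via \Cref{t:binomt}, and the binomial re-summation produces $s_{n+1}(x)-s_{n+1}(0)$ --- and it is also how \cref{e:sigmasheffer} is used later in the Faulhaber example, where the right-hand side is $(\Bern_{n+1}(x)-\Bern_{n+1}(0))/(n+1)$. With this correction your strategy goes through essentially verbatim: $QQ^{-1}s_n = s_n$ as you computed, and now $Q^{-1}Qs_n = n\cdot(s_n(x)-s_n(0))/n = (1-\E)s_n(x)$ with no mismatch, the $n=0$ case handled as you did; the corrected operator genuinely satisfies $\E Q^{-1}=0$, consistent with \Cref{p:SigmaId}~(a). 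Two smaller repairs are still needed. First, uniqueness does not require your detour through $\E R$, which as written circularly invokes $\E Q^{-1}=0$: if $RQ = 1-\E = Q^{-1}Q$, then $(R-Q^{-1})Q = 0$, and since $Q$ is surjective (it maps $p_{n+1}$ to $(n+1)p_n$) this forces $R = Q^{-1}$; this is the paper's argument. Second, since you define $Q^{-1}$ from one chosen Sheffer set, you should note explicitly that uniqueness is what makes the formula valid for \emph{every} Sheffer set for $Q$, not just the one used in the construction; the paper sidesteps this by constructing from the basic set and deriving the general Sheffer formula afterwards.
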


Note that this pseudoinverse coincides with Steffensen's \cite{steffensen1941} and Yang's \cite{yang1980}.

\begin{proof}
    Let $\{p_n(x)\}_{n\in\N}$ be the basic set for $Q$. Define $Q^{-1}$ by $Q^{-1} p_n(x) = \inv{n+1}p_{n+1}(x)$ and extended by linearity to all power series. Hence, by the binomial theorem (\Cref{t:binomt})
    \begin{align*}
        Q^{-1} s_n(x)
        &= Q^{-1} \sum_{k=0}^n \binom{n}{k} s_{n-k}(0) p_k(x) = \sum_{j=1}^{n+1} \binom{n}{j-1} s_{n-j+1}(0) \frac{p_j(x)}{j} \\
        &= \inv{n+1} \sum_{j=1}^{n+1} \binom{n+1}{j} s_{n+1-j}(0) p_j(x) = \frac{s_{n+1}(x) - s_{n+1}(0)}{n+1},
    \end{align*}
    because $p_0(x) = 1$. Now we verify that it follows the said properties
    \division{
    \begin{align*}
        &QQ^{-1} f(x) = Q \sum_{k=0}^\infty \frac{a_k}{k+1} p_{k+1}(x) \\
        &= \sum_{k=0}^\infty a_k p_k(x) = f(x)
    \end{align*}
    }{
    \begin{align*}
        &Q^{-1} Q f(x) = Q^{-1} \sum_{k=1}^\infty a_k k p_{k-1}(x) \\
        &= \sum_{k=1}^\infty a_k p_k(x) = f(x) - a_0 = (1-\E)f(x)
    \end{align*}
    }
    Thus, $Q^{-1}$ exists, and to show the unicity, suppose $I_Q$ is another sigma operator, then, in particular, $Q^{-1} Q = I_Q Q$. Because $Q$ is surjective, $Q^{-1} = I_Q$.
\end{proof}

\begin{proposition}
    If $\phi \leadsto Q$, for positive integers $n$ and $p$
    \[
    (n+1) \cdots (n+p) Q^{-p} \phi_n(x) = \phi_{n+p}(x),
    \]
    where $Q^{-p} := (Q^{-1})^p$. In particular, $\phi_n(x) = n! Q^{-n} 1$.
\end{proposition}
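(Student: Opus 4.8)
The plan is to prove the main identity by a simple induction on $p$ (equivalently, by telescoping a product), using only the way the sigma operator acts on the basic set. Since $\phi\leadsto Q$, the basic set of $Q$ is exactly $\{\phi_n\}_{n\in\N}$, and $Q^{-1}$ is defined precisely so that
\[
Q^{-1}\phi_n(x) = \frac{\phi_{n+1}(x)}{n+1}
\]
for every $n\geq 0$; this is the defining relation from the construction of $Q^{-1}$ (equivalently \cref{e:sigmasheffer} with $s=\phi$, noting $\phi_{n+1}(0)=0$).

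For the base case $p=1$, multiplying the displayed relation by $n+1$ yields $(n+1)Q^{-1}\phi_n(x)=\phi_{n+1}(x)$, which is the claim. For the inductive step I would assume $(n+1)\cdots(n+p)\,Q^{-p}\phi_n(x)=\phi_{n+p}(x)$, apply $Q^{-1}$ to both sides, and invoke the defining relation once more, now at index $n+p$, to get $Q^{-1}\phi_{n+p}(x)=\phi_{n+p+1}(x)/(n+p+1)$; multiplying through by $n+p+1$ then produces the statement for $p+1$. Equivalently, I can skip the induction and telescope directly,
\[
Q^{-p}\phi_n(x)=\frac{\phi_{n+p}(x)}{(n+1)(n+2)\cdots(n+p)},
\]
which rearranges into the desired formula.

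For the ``in particular'' clause, the idea is to run the telescoped identity from the starting index $0$ with $p=n$. Since $\phi_0(x)=1$ and $(1)(2)\cdots(n)=n!$, this gives $Q^{-n}1=\phi_n(x)/n!$, that is $\phi_n(x)=n!\,Q^{-n}1$.

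I do not expect any real obstacle: the argument is essentially bookkeeping once the relation $Q^{-1}\phi_n=\phi_{n+1}/(n+1)$ is established. The only points deserving mild care are the indexing—confirming that the factors $(n+1),\dots,(n+p)$ are produced one at a time by the normalization of $Q^{-1}$—and checking that the instance needed for the ``in particular'' part is legitimate, which it is because the defining relation holds for all $n\geq 0$, including $n=0$ where $\phi_0=1$.
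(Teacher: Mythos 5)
Your proof is correct and is exactly the argument the paper intends: the paper states this proposition without proof because it follows immediately from the construction of the sigma operator, which is \emph{defined} on the basic set by $Q^{-1}\phi_n(x) = \phi_{n+1}(x)/(n+1)$, and your induction/telescoping simply iterates that defining relation. Your attention to the two delicate points—the normalization producing the factors $(n+1),\dots,(n+p)$ one at a time, and the legitimacy of the $n=0$ instance (where $\phi_0(x)=1$) needed for $\phi_n(x) = n!\,Q^{-n}1$—is exactly what a written-out proof should include.
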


Sigma operators can be easily extended by defining $Q^{-1}_{(a)} := E^{-a} Q^{-1} E^a$. These are characterized by
\begin{equation}
    Q Q^{-1}_{(a)} = 1, \quad \and \quad Q^{-1}_{(a)} Q = 1 - \Ev a.
\end{equation}
This extension allows for a broader application of sigma operators.

\begin{example}\label{x:intsum}
    An unsurprising exact formula for the \emph{integral operator} $\I := D^{-1}$ applied to functions is
    \[
    \Int a f:x\longmapsto \int_a^x f(t) \odif t,
    \]
    which holds by unicity. $\Int a D = 1 - \Ev a$ is the operational translation of the fundamental theorem of calculus.
    
    For the \emph{sum operator} $\Sigma := \Delta^{-1}$, we have the intuitive formula to compute it, applicable solely when $a$ and $x$ are integers
    \begin{equation}\label{e:sum}
        \Sum a f : x \longmapsto \sum_{k=a}^{x-1} f(k),
    \end{equation}
    and $\Sum a \Delta = 1 -\Ev a$ is simply a rewording of the telescopic sum.
\end{example}

Since sigma operators act as pseudo-inverses, it is reasonable to assume they are related to usual inverses and operator division. This assumption is indeed correct, as shown in the subsequent proposition.

\begin{proposition}\label{p:SigmaId}
   For $Q, R$ two delta operators, these identities hold
   \begin{alphabetize}
       \item $\Ev{a} Q^{-1}_{(a)} = 0$.
       \item For an Appell operator $A$, $(AQ)^{-1}_{(a)} = Q^{-1}_{(a)} A^{-1}$.
       \item $R^{-1}_{(a)} = Q^{-1}_{(a)} \dfrac{Q}{R}$.
       \item $Q/R = Q R^{-1}_{(a)} = Q R^{-1}$.
   \end{alphabetize}
\end{proposition}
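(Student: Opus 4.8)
The plan is to reduce all four identities to the two defining relations of the sigma operator, $Q Q^{-1}_{(a)} = 1$ and $Q^{-1}_{(a)} Q = 1 - \Ev a$, together with its uniqueness, which rests on the right-cancellability of delta operators: each delta operator is surjective, since the relation $Q Q^{-1}_{(a)} = 1$ exhibits a right inverse. Two auxiliary facts will be used throughout: that $Q$ annihilates constants, so $Q \Ev a = 0$ by \Cref{p:Qa=0}(a); and the operator-division facts $\frac{R}{Q} Q = R$ from \cref{e:(U/V)V} and $(Q/R)^{-1} = R/Q$ with $Q/R$ an Appell operator from \Cref{c:Q/R}.

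For (a) I would expand $Q^{-1}_{(a)} = E^{-a} Q^{-1} E^a$ and observe that $\Ev a E^{-a} = \E$, which reduces the claim to $\E Q^{-1} = 0$; this holds because $Q^{-1} p_n(x) = p_{n+1}(x)/(n+1)$ on the basic set and $p_{n+1}(0) = 0$. Equivalently, one can apply $\Ev a$ to the relation $Q^{-1}_{(a)} Q = 1 - \Ev a$ and use the idempotency $\Ev a \Ev a = \Ev a$ to get $\Ev a Q^{-1}_{(a)} Q = \Ev a - \Ev a = 0$, then cancel the surjective $Q$ on the right. For (b) I first check that $AQ$ is a delta operator: it is shift-invariant as a product of shift-invariant operators, and $(AQ)x = A c = c\,A1$ is a nonzero constant since $A1 \neq 0$ by \Cref{p:inverses}(a). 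It then suffices, by uniqueness of the sigma operator for $AQ$, to verify that $Q^{-1}_{(a)} A^{-1}$ satisfies both defining relations: indeed $(AQ)(Q^{-1}_{(a)} A^{-1}) = A(QQ^{-1}_{(a)})A^{-1} = 1$ and $(Q^{-1}_{(a)} A^{-1})(AQ) = Q^{-1}_{(a)} Q = 1 - \Ev a$.

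Parts (c) and (d) then fall out quickly. For (c), set $A = Q/R$, which is Appell with $AR = Q$ (by \cref{e:(U/V)V}) and $A^{-1} = R/Q$ (by \Cref{c:Q/R}); applying (b) with $R$ in the role of the delta operator gives $Q^{-1}_{(a)} = (AR)^{-1}_{(a)} = R^{-1}_{(a)} A^{-1} = R^{-1}_{(a)} \frac{R}{Q}$. For (d), I would argue directly: since $R^{-1}_{(a)} R = 1 - \Ev a$ and $Q\Ev a = 0$, we get $(Q R^{-1}_{(a)}) R = Q(1 - \Ev a) = Q - Q\Ev a = Q$; as $(Q/R)R = Q$ as well and $R$ is surjective, right-cancellation yields $Q R^{-1}_{(a)} = Q/R$ for every $a$, and the special case $a = 0$ gives $Q R^{-1} = Q/R$, establishing both equalities.

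The computations are all short; the only point demanding care — and the closest thing to an obstacle — is the legitimacy of each invocation of uniqueness and of right-cancellation. This amounts to confirming at each step that the operator being cancelled ($AQ$ in (b), $R$ in (c) and (d)) is genuinely a delta operator, hence surjective, and to the bookkeeping with the evaluation operator in (a), where the identities $\Ev a E^{-a} = \E$, $\Ev a \Ev a = \Ev a$, and $Q\Ev a = 0$ must be used in the correct order. Once these are in place, every identity reduces to a one-line manipulation of the two defining relations.
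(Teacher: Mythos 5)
Your proposal is correct and takes essentially the paper's approach: every identity is reduced to the two defining relations of sigma operators together with uniqueness/surjectivity of delta operators, (b) is the pivotal step for a general Appell factor, and (c) follows by specializing $A$ to the quotient of the two delta operators via \cref{e:(U/V)V} and \Cref{c:Q/R}. The differences are only cosmetic: the paper obtains (a) from the associativity computation $Q^{-1}_{(a)} = Q^{-1}_{(a)}\bigl(Q Q^{-1}_{(a)}\bigr) = \bigl(1-\Ev{a}\bigr)Q^{-1}_{(a)}$ and then chains (a)$\Rightarrow$(b)$\Rightarrow$(c)$\Rightarrow$(d) by left multiplications, whereas you verify the two defining relations for $Q^{-1}_{(a)}A^{-1}$ directly (also checking explicitly that $AQ$ is a delta operator, which the paper leaves tacit) and prove (d) by right-cancelling the surjective $R$ instead of deducing it from (c).
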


\begin{proof}
    First, by definition
    \[
    Q^{-1}_{(a)} = Q^{-1}_{(a)} (Q Q^{-1}_{(a)}) = (1 - \Ev{a}) Q^{-1}_{(a)} = Q^{-1}_{(a)} - \Ev{a} Q^{-1}_{(a)},
    \]
    from which we deduce (a). Furthermore, (a) implies (b) using $AQ (AQ)^{-1}_{(a)} = 1$ in
    \[
    Q^{-1}_{(a)} A^{-1} = Q^{-1}_{(a)}A^{-1} AQ (AQ)^{-1}_{(a)} = (1 - \Ev{a}) (AQ)^{-1}_{(a)} = (AQ)^{-1}_{(a)}.
    \]
    Setting $A = R/Q$ (which is an Appell operator by \Cref{c:Q/R}) we obtain (c) with \cref{e:(U/V)V}. (d) follows by applying $R$ to the left of (c).
\end{proof}

\begin{example}\label{x:Bern-1}
    With \Cref{p:SigmaId} (d), we can obtain an integral formula for the inverse of the Bernoulli operator $\Bern^{-1}$ since by \Cref{c:Q/R}
    \begin{equation}
        \Bern^{-1} = \Delta/D = \Delta \Int a.
    \end{equation}
    Thus,
    \begin{equation}
        \Bern^{-1} f(x) = \int_a^{x+1} f(t) \odif t -  \int_a^x f(t) \odif t =  \int_x^{x+1} f(t) \odif t,
    \end{equation}
    where we do witness the disappearance of the dependence in $a$.
\end{example}

\begin{example}[Faulhaber's formula]
    Bernoulli numbers, defined in \Cref{x:genBern}, notably find utility in expressing the sum of the first $n$-th powers. To find the formula, we apply identity (c) of \Cref{p:SigmaId}, that is, $\Sigma = \I \Bern$, to the monomials
    \begin{equation}
        \sum_{k=0}^{x-1} k^n = \int_0^x \Bern_n(t) \odif t = \frac{\Bern_{n+1}(x) - \Bern_{n+1}(0)}{n+1} =  \frac{\Bern_{n+1}(x) - \Bern_{n+1}}{n+1},
    \end{equation}
    because of \cref{e:sigmasheffer}. By \Cref{t:binomt} we also have the following
    \begin{equation}
        \sum_{k=0}^{x-1} k^n = \int_0^x \Bern_n(t) \odif t = \int_0^x \sum_{k=0}^n \binom{n}{k} \Bern_k(0)  t^{n-k} \odif t = \sum_{k=0}^n \binom{n}{k} \Bern_k \frac{x^{n+1-k}}{n+1-k}.
    \end{equation}
    Both formulas can be transformed into a better closed form, also known as \emph{Faulhaber's formula}
    \begin{equation}
        \sum_{k=0}^{x-1} k^n = \inv{n+1} \sum_{k=0}^n \binom{n+1}{k} \Bern_k x^{n+1-k}.
    \end{equation}
\end{example}

\begin{example}
    In \cref{e:sum}, when $x$ is not an integer, $\Sigma$ represent a kind of operation known as \emph{fractional summation}. For example, using \Cref{p:SigmaId} (c) and \Cref{t:G}
    \begin{equation}
        \Sigma e^{xt} = \I \Bern e^{xt} = \I \frac{D}{e^D - 1} e^{xt} = \frac{t}{e^t - 1} \I e^{xt} = \frac{t}{e^t - 1} \frac{e^{xt}-1}{t} = \frac{e^{xt}-1}{e^t-1},
    \end{equation}
    which is the fractional generalization of the geometric sum formula
    \[
    \sum_{k=0}^{x-1} e^{kt} = \sum_{k=0}^{x-1} (e^t)^k = \frac{e^{xt}-1}{e^t-1},
    \]
    where $x$ is a positive integer and $t$ is nonzero. Müller and Schleicher have previously established a successful theory of fractional sums in \cite{muller2005,muller2010,muller2011}. Their theory bears similarities with the fractional sums that arise as a consequence of the \emph{Ramanujan summation} technique \cite[Sec.~1.4.2]{candelpergher2017}. Therefore, it would be intriguing to explore the connections, and possibly establish equivalence, between $\Sigma$ and the previously developed fractional sums.
\end{example}

\begin{proposition}[Euler-Maclaurin identity]\label{p:EM}
    For two delta operators $Q$ and $R$, if you can write $Q/R = \sum a_k Q^k$, then
    \[
    R^{-1}_{(a)} - a_0 Q^{-1}_{(a)} = \sum_{k=1}^\infty a_k (1-\Ev{a}) Q^{k-1}.
    \]
\end{proposition}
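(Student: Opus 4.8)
The plan is to reduce the identity to the division formula of \Cref{p:SigmaId} together with a single application of the defining relation $Q^{-1}_{(a)} Q = 1 - \Ev a$. First I would record that $Q/R$ is an Appell operator by \Cref{c:Q/R}, hence shift-invariant and invertible, so that the hypothesised expansion $Q/R = \sum_{k=0}^\infty a_k Q^k$ is exactly its Expansion-theorem form (\Cref{t:expansion}) as a power series in the delta operator $Q$; in particular $a_0 = (Q/R)1 \neq 0$ by \Cref{p:inverses}(a), although this positivity is not actually needed for the statement.

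The central step is to invoke the mirror image of \Cref{p:SigmaId}(c). Since that identity holds for any pair of delta operators, swapping the roles of $Q$ and $R$ in $Q^{-1}_{(a)} = R^{-1}_{(a)}\frac{R}{Q}$ yields
\[
R^{-1}_{(a)} = Q^{-1}_{(a)} \frac{Q}{R} = Q^{-1}_{(a)} \sum_{k=0}^\infty a_k Q^k = \sum_{k=0}^\infty a_k\, Q^{-1}_{(a)} Q^k,
\]
where it is essential that the sigma operator sits on the \emph{left}, so that each power of $Q$ can be peeled off using the defining relation. For $k \geq 1$ I would write $Q^{-1}_{(a)} Q^k = (Q^{-1}_{(a)} Q) Q^{k-1} = (1 - \Ev a) Q^{k-1}$, whereas the $k=0$ term contributes the single summand $a_0 Q^{-1}_{(a)}$. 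Isolating this term gives
\[
R^{-1}_{(a)} - a_0 Q^{-1}_{(a)} = \sum_{k=1}^\infty a_k (1 - \Ev a) Q^{k-1},
\]
which is precisely the claim.

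The only point that demands care — and the nearest thing to an obstacle — is the asymmetry between $Q Q^{-1}_{(a)} = 1$ and $Q^{-1}_{(a)} Q = 1 - \Ev a$: the computation works exactly because the expansion places $Q^{-1}_{(a)}$ on the left, so that the factor $1 - \Ev a$ (rather than a bare $1$) emerges and produces the evaluation terms on the right-hand side. Had one instead tried to expand $Q^{-1}_{(a)}$ on the right via $Q Q^{-1}_{(a)} = 1$, the subtle $\Ev a$ contribution would be lost. No convergence difficulty arises, since every operator is ultimately applied to a polynomial, truncating all series to finitely many nonzero terms.
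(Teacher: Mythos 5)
Your proposal is correct and is essentially the paper's own proof: the paper likewise applies \Cref{p:SigmaId}(c) with the roles of $Q$ and $R$ swapped, expands $Q^{-1}_{(a)}\sum_k a_k Q^k$, and peels off $Q^{-1}_{(a)}Q = 1-\Ev{a}$ from each term with $k\geq 1$. Your additional remarks (the left-placement of the sigma operator and the irrelevance of $a_0\neq 0$) are accurate elaborations of the same one-line argument.
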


\begin{proof}
    We simply apply \Cref{p:SigmaId} (c) and expand
    \[
    R^{-1}_{(a)} = Q^{-1}_{(a)} \frac{Q}{R} = Q^{-1}_{(a)} \sum_{k=0}^\infty a_k Q^k = a_0 Q^{-1}_{(a)} + \sum_{k=1}^\infty a_k (1-\Ev{a}) Q^{k-1}.
    \]
\end{proof}

\begin{example}
    The Bernoulli operator can be expanded with Bernoulli numbers :
    \begin{equation}
        \Bern = D/\Delta = \frac{D}{e^D - 1} = \sum_{k=0}^\infty \frac{\Bern_k}{k!} D^k,
    \end{equation}
    applying \Cref{p:EM}, we obtain the operational form of the classical Euler-Maclaurin formula
    \begin{equation}
        \Sum a - \Int a = \sum_{k=1}^\infty \frac{\Bern_k}{k!} (1-\Ev a) D^{k-1}.
    \end{equation}
\end{example}

\section{Examples}\label{s:Ex}

To thoroughly demonstrate the effectiveness of the previously developed tools, we will explore some of the most important basic sets, including \emph{Abel}, \emph{Touchard}, and \emph{Laguerre} polynomials, after revisiting the more recent example \cite[Ex.~3.2.7 c]{dibucchianico1997}. Specifically, we demonstrate the effectiveness of the methods we developed to effortlessly answer the simple questions regarding some basic sequence, sometimes in multiple ways. Additionally, we derive both old and new identities for them, some of which previously lacked umbral proofs. 

\subsection{A summarizing example featuring Catalan numbers}

Let $\Lambda := D(1-D)$ be a delta operator. We define the \emph{Catalan operator} by $\Ca\leadsto\Lambda$. The coefficients of Catalan polynomials can be quickly computed with the Steffensen formula (\Cref{t:closedforms})
\begin{align}
    \Ca_n(x)
    &= \x \pa{\frac{D}{\Lambda}}^n x^{n-1} = \x (1-D)^{-n} x^{n-1} = \x \sum_{k=0}^\infty \binom{-n}{k} (-D)^k x^{n-1} \nonumber\\
    &= \sum_{k=0}^n \binom{n+k-1}{n-1} \frac{(n-1)!}{(n-1-k)!} x^{n-k} = \sum_{k=0}^n \binom{2n-k-1}{n-1} \frac{(n-1)!}{(k-1)!} x^k.
\end{align}
We have
\begin{equation}\label{e:Lambda-1}
    \Lambda^{[-1]} = \dfrac{1-\sqrt{1-4D}}{2} = \sum_{n=1}^\infty C_{n-1} D^n,
\end{equation}
where $C_n$ is the $n$-th the \emph{Catalan number}. Without \Cref{c:gphi}, it would be tedious to find the formula for the following power series 
\begin{equation}
    \pa{\dfrac{1-\sqrt{1-4D}}{2}}^k = \sum_{n=k}^\infty \frac{k}{n} \binom{2n-k-1}{n-1} D^n,
\end{equation}
and constitutes a way to find the famous formula for the Catalan numbers (with $k=1$) $C_n = \inv{n+1} \binom{2n}{n}$. Finding the inverse $\Ca^{-1} \leadsto \Lambda^{[-1]}$ is made easier by \Cref{p:Q-1/D}. Indeed,
\begin{align*}
    &\pa{\frac{\Lambda}{D}}^k x^n
    = (1-D)^k x^n = \sum_{p=0}^k \binom{k}{p} (-D)^p x^n \\
    ={}& \sum_{p=0}^k \binom{k}{p} (-1)^p \frac{n!}{(n-p)!} x^{n-p} = \sum_{p=n-k}^n \binom{k}{n-p} \frac{n!}{p!} (-1)^{n-p} x^p,
\end{align*}
from which we recover the coefficient at index $p=k$
\begin{equation}
    \coeff{n}{k}_{\Ca^{-1}} =
    \begin{cases}
        0 \com{if} k < \ceil{\dfrac{n}{2}} \\[9pt]
        \displaystyle\binom{k}{n-k} \frac{n!}{k!} (-1)^{n-k} \com{if} k \geq \ceil{\frac{n}{2}},
    \end{cases}
\end{equation}
keeping in mind that the binomial coefficient vanishes when $k \geq n-k$ which is equivalent to $k \geq \ceil{n/2}$. The equality can be rewritten as
\begin{equation}
    \Ca^{-1}_n(x) = \sum_{k=\ceil{n/2}}^n \binom{k}{n-k} \frac{n!}{k!} (-1)^{n-k} x^k.
\end{equation}
The sigma operator for $\Lambda$ can be expressed with integration thanks to \Cref{p:SigmaId} (c)
\begin{equation}
    \Lambda^{-1} = D^{-1} \frac{D}{\Lambda} = \I \inv{1-D}.
\end{equation}

\subsection{Abel polynomials and idempotent numbers}

The Abel operator is defined for some fixed constant $a\in\C$ by $A \leadsto \D := D E^a$. Once again, the formulation of the shifted derivative suggests the use of the Steffensen formula:
\begin{equation}
    A_n(x) = \x (D/\D)^n x^{n-1} = \x E^{-an} x^{n-1} = x(x-an)^{n-1}.
\end{equation}
The Abel polynomials are basic, thus, they are also of binomial type (\Cref{t:binomt}) and \emph{Abel's identity} is immediate
\begin{equation}
    (x+y)(x+y-an)^{n-1} = \sum_{k=0}^n \binom{n}{k} x(x-ak)^{k-1} y(y-a(n-k))^{n-k-1}.
\end{equation}

The sigma operator can also be expressed with integration using \Cref{p:SigmaId} (c):
\begin{equation}
    \D^{-1} = \I E^{-a}.
\end{equation}

The Niederhausen transform (\Cref{t:Nieder}) allows a very quick computation of the inverse of $A$. Indeed, $\str{a} \leadsto aD$ so $\binom{n}{k}\str{a}_{n-k}(k) = \binom{n}{k}(ak)^{n-k}$ is the coefficients of a basic set, whose inverse is basic for $De^{aD} = \D$. So we precisely obtain
\begin{equation}
    A^{-1}_n(x) = \sum_{k=0}^n \binom{n}{k}(ak)^{n-k} x^k,
\end{equation}
and
\begin{equation}
    \D^{[-1]} = \sum_{n=1}^\infty \frac{(-an)^{n-1}}{n!} D^n.
\end{equation}
For $a=1$, the coefficients $\binom{n}{k}k^{n-k}$ that appear are known as \emph{idempotent numbers}, and the power series for $\D^{[-1]}$ corresponds to the main branch of the \emph{Lambert W function}.

\subsection{Divided difference and rising factorials}

The \emph{divided difference} operator is defined by $\Delta_h := (E^h - 1)/h$. It generalizes $\Delta = \Delta_1$. We can extend the definition to $h=0$ by $\Delta_0 := \lim_{h\to0} \Delta_h$, which, by the definition of the derivative, equals $D$. We can reduce the study of its basic sets by noticing that $\Delta_h = (D/h) \diamond \Delta \diamond (hD)$. Thus, by \Cref{t:Shefferclosed}, we deduce this formula for $\varphi_h \leadsto \Delta_h$:
\begin{equation}\label{e:phih}
    \varphi_h = \str{1/h} \varphi \str h.
\end{equation}
Therefore,
\begin{align}
    \varphi_{h, n}(x) &:= (\varphi_h)_n(x) = \str{1/h} \varphi \str h x^n \nonumber\\
    &= h^n (x/h)_n = x(x - h) \cdots (x - (n-1)h).
\end{align}

The other important instance of $\varphi_h$ is the case $h = -1$. Indeed, the \emph{backward difference} $\nabla := \Delta_{-1} = 1 - E^{-1}$ is the delta operator for the \emph{rising factorials} $\{\abra{x}_n\}_{n\in\N}$. If we denote the umbral operator with $\rho := \varphi_{-1}$, then $\rho_n(x) = \abra{x}_n := x (x+1) \cdots (x + n-1)$.  Just as we have the formula for the unsigned Stirling number of the second kind, $\Stir{n}{k} = \coeff{n}{k}_{\varphi^{-1}}$, there is a corresponding formula for the first kind, $\coeff{n}{k} = \coeff{n}{k}_\rho$.

\subsection{Touchard polynomials}

The Touchard polynomials \cite{touchard1956} (or \emph{exponential polynomials}) $\{T_n(x)\}_{n\in\N}$ are defined by $T := \varphi^{-1}$, therefore
\begin{equation}
    T_n(x) = \sum_{k=0}^n \Stir{n}{k} x^k.
\end{equation}
Because $\Delta = e^D - 1$, from \Cref{t:Shefferclosed}, we deduce that the Touchard polynomials are a basic set of polynomials for the delta operator $\L := \Delta^{[-1]} = \log(1+D)$.

Defining \emph{Bell numbers} as $B_n := T_n(1) = \sum_{k=0}^n \Stir{n}{k}$ we are able to deduce many of its properties. The recurrence relation falls off \Cref{t:closedforms} and \Cref{c:phix}. Indeed, $(\L^{[-1]})' = E$, so
\begin{equation}
    T_{n+1}(x) = \x \varphi^{-1} E x^n = x \sum_{k=0}^n \binom{n}{k} T_k(x).
\end{equation}
This operationally proves the equivalent umbral identity of \cite[p.~60]{rota1973}. By evaluating at $x = 1$, we obtain the well-known recurrence relationship
\begin{equation}
    B_{n+1} = \sum_{k=0}^n \binom{n}{k} B_k.
\end{equation}
The recurrence may be extended further by initially generalizing the operational formula. Using the identity found in \Cref{x:phix^n}, it follows that
\[
\sum_{k=0}^n \coeff{n}{k}_\varphi \varphi^{-1} \x^k = \varphi^{-1} (\x)_n  = \x^n \varphi^{-1} E^n.
\]
Now, applying the first inversion formula (\Cref{t:inversion}), we arrive at
\begin{equation}\label{e:Spiveyop}
    \varphi^{-1} \x^n = \sum_{k=0}^n \Stir{n}{k} \x^k \varphi^{-1} E^k.
\end{equation}
Applying \Cref{e:Spiveyop} to $x^m$, we obtain
\[
T_{n+m}(x) = \sum_{k=0}^n \Stir{n}{k} \x^k \varphi^{-1} (x + k)^m = \sum_{k=0}^n \Stir{n}{k} x^k \sum_{j = 0}^m \binom{m}{j} k^{m-j} T_j(x),
\]
which is a known generalization of \emph{Spivey's identity} \cite{gould2008}; setting $x=1$ yields Spivey's original result \cite{spivey2008}.

\subsubsection{Dobiński's formula}

The operator $\x D$ is oftentimes called the \emph{Cauchy-Euler operator} and satisfy the eigenvector equality $\x D x^p = p x^p$. This is a special case of \Cref{c:DE}. Boole \cite{boole1844} noticed that, by linearity, we could extend this identity for any power series $f$ to $f(\x D) x^p = f(p) x^p$. He mentioned that, in particular, if $f$ is the falling factorial, then $(\x D)_n x^p = (p)_n x^p$, which can be equated to $\x^n D^n x^p = (p)_n x^p$ to yield the well-known operator identity $(\x D)_n= \x^n D^n$. In particular $\x^n D^n = \sum_{k=0}^n \coeff{n}{k}_\varphi (\x D)^k$, thus, by the first inversion formula (\Cref{t:inversion})
\begin{equation}\label{e:xD Stir}
    (\x D)^n = \sum_{k=0}^n \Stir{n}{k} \x^k D^k.
\end{equation}
Applying $(\x D)^n$ to the exponential, we get
\division{
on the one hand
\[
(\x D)^n e^x = \sum_{k=0}^\infty (\x D)^n \frac{x^k}{k!} = \sum_{k=0}^\infty \frac{k^n x^k}{k!},
\]
}{
and on the other hand, via \cref{e:xD Stir}
\[
(\x D)^n e^x = \sum_{k=0}^n \Stir{n}{k} \x^k D^k e^x = T_n(x) e^x,
\]
}
\noindent whence the \emph{Dobiński's formula}
\[
T_n(x) = e^{-x} \sum_{k=0}^\infty \frac{k^n x^k}{k!}.
\]
Define the \emph{generating function} operator $\Gat$ by
\begin{equation}
    \Gat f(x) := \sum_{k=0}^\infty \frac{f(k)}{k!} x^k,
\end{equation}
then the operational form of Dobiński's formula is $\varphi^{-1} = e^{-\x} \Gat$. It follows that $\Gat \varphi = e^\x$. Applying the latter to a function $f$, we obtain
\[
\sum_{k=0}^\infty \frac{\varphi(f)(k)}{k!} x^k = e^x f(x),
\]
therefore we arrive at the peculiar formula: $\varphi f(k) = \E D^k e^x f(x)$.

\subsection{Laguerre polynomials and Lah numbers}

Now let's consider the delta operator $\Psi := D/(1-D)  = D + D^2 + D^3 + \ldots$ and its umbral operator $L\leadsto\Psi$ we call the Laguerre operator. Then define the \emph{associated Laguerre} operator as $\Lag\alpha := (1 - D)^\alpha L$. Note that our convention for the Laguerre polynomials differs from the standard one, which corresponds to $n!(-1)^n \Lag{-1}_n(x)$. Immediately from \Cref{p:crossbinom}, we find that the associated Laguerre polynomials $\{\Lag\alpha_n(x)\}_{n\in\N}$ form a cross-sequence and satisfy the known identity
\begin{equation}
    \Lag{\alpha + \beta}_n(x+y) = \sum_{k=0}^n \binom{n}{k} \Lag\alpha_k(x) \Lag\beta_{n-k}(y).
\end{equation}

To derive the closed form, we use the transfer formula (\Cref{t:closedforms}): we have $L_n(x) = \Psi' (1-D)^{n+1} x^n = (1-D)^{n - 1} x^n$. Therefore,
\begin{align}
    \Lag\alpha_n(x) &= (1-D)^{n+\alpha - 1} x^n = \sum_{k=0}^\infty \binom{n+\alpha -1}{k} (-D)^k x^n \nonumber\\
    &= \sum_{k=0}^n \binom{n+\alpha -1}{k} \frac{n!}{(n-k)!} (-1)^k x^{n-k} \\
    &= \sum_{k=0}^n \binom{n+\alpha -1}{n-k} \frac{n!}{k!} (-1)^{n-k} x^k.
\end{align}
With $\alpha = 0$, we obtain the plain Laguerre polynomials
\begin{equation}\label{e:LagLah}
    L_n(x) = \sum_{k=0}^n \Lah{n}{k} (-1)^{n-k}x^k,
\end{equation}
where $\Lah{n}{k} := \binom{n - 1}{n-k} \frac{n!}{k!}$ are the unsigned \emph{Lah numbers}. Through the relation $\Delta = \Psi \diamond \nabla$ and \Cref{t:Shefferclosed}, we obtain the identity
\begin{equation}\label{e:EmmLag}
    \varphi = \rho L,
\end{equation}
which is the operational rewording of the following result proved by Lah when he first introduced Lah numbers \cite{lah1954,lah1955}
\begin{equation}\label{e:Lahid}
    (x)_n = \sum_{k=0}^n  \Lah{n}{k} (-1)^{n-k} \abra{x}_n.
\end{equation}

The associated sigma operators of $\Psi$ can be computed with \Cref{p:SigmaId} (c)

\begin{equation}
    \Psi^{-1}_{(a)} = D^{-1}_{(a)} \frac{D}{\Psi} = \I_{(a)} (1-D) = \I_{(a)} + \Ev{a} - 1 .
\end{equation}

Interestingly enough, since $(-\Psi)^{[-1]} = -\Psi$, it follows that $(L\str{-1})^{-1} = L \str{-1}$. This is one of the reasons why the literature defines Laguerre polynomials as $L\str{-1} \leadsto -\Psi$. By diverging from this convention, $\Psi$ and $\{L_n(x)\}_{n\in\N}$ remains unitary.

The identity 
\[
L_n(\lambda x) = \sum_{k=0}^n \Lah{n}{k} \lambda^k (\lambda - 1)^{n-k} L_k(x)
\]
is referred to as \emph{Erdelyi's duplication formula} \cite{rota1973}. It can be proven by the two methods described in \Cref{x:coconstant}. The traditional approach uses the transfer formula \cite{rota1973,dibucchianico1998}, but we will illustrate the alternative method. We compute the coefficient of $X^{(\lambda)} := L^{-1} \str\lambda L$ using \Cref{p:compo}
\begin{align*}
    \coeff{n}{k}_{X^{(\lambda)}}
    &= \sum_{j=k}^n \coeff{j}{k}_{L^{-1}} \coeff{n}{j}_{\str\lambda L} = \sum_{j=k}^n \Lah{j}{k} \Lah{n}{j} (-1)^{n-j} \lambda^j \\
    &= \frac{n!}{k!} \sum_{j=k}^n \binom{j-1}{k-1} \binom{n-1}{j-1} (-1)^{n-j} \lambda^j = \frac{n!}{k!} \binom{n-1}{k-1} \sum_{j=k}^n  \binom{n-k}{j-k} (-1)^{n-j} \lambda^j \\
    &= \Lah{n}{k} \sum_{j=0}^{n-k}  \binom{n-k}{j} (-1)^{n-j-k} \lambda^{j+k} = \Lah{n}{k} \lambda^k (\lambda - 1)^{n-k}.
\end{align*}
We can simply apply $L$ to the formula for $X^{(\lambda)}_n(x)$ to arrive at the desired result.

The differential equation characterizing the associated Laguerre polynomials can be found with \Cref{c:DE} and the Pincherle derivative. Given that
\[
(1-D)^\alpha \x = \x (1-D)^\alpha + ((1-D)^\alpha)' = (\x - \x D - \alpha )(1-D)^{\alpha-1},
\]
we have
\begin{align*}
   &n \Lag\alpha_n(x) = (1-D)^\alpha \x \frac\Psi{\Psi'} L_n(x) \\
   ={}& (\x- \x D - \alpha) (1-D)^{\alpha-1} D (1 - D) L_n(x) \\
   ={}& (\x - \alpha - \x D) D \Lag\alpha_n(x),
\end{align*}
or in other words,
\begin{equation}
    x {\Lag\alpha_n}''(x) + (\alpha - x){\Lag\alpha_n} '(x) + n \Lag\alpha_n(x) = 0.
\end{equation}

\printbibliography

\end{document}